\numberwithin{equation}{section}
\newcommand{\N}{\mathbb{N}}
\newcommand{\R}{\mathbb{R}}
\newcommand{\sfd}{{\sf d}}
\renewcommand{\d}{{\mathrm d}}
\newcommand{\restr}[1]{\lower3pt\hbox{\(|_{#1}\)}}
\newcommand{\nchi}{{\raise.3ex\hbox{\(\chi\)}}}
\newcommand{\Der}{{\rm Der}}
\newcommand{\1}{\mathbbm 1}
\newcommand{\fr}{\penalty-20\null\hfill\(\blacksquare\)}
\newcommand{\mm}{\mathfrak{m}}
\newcommand{\X}{{\rm X}}
\newcommand{\rmC}{{\rm C}}
\newcommand{\LIP}{{\rm LIP}}
\newcommand{\Lip}{{\rm Lip}}
\newcommand{\lip}{{\rm lip}}
\renewcommand{\div}{{\rm div}}
\newtheorem{theorem}{Theorem}[section]
\newtheorem{corollary}[theorem]{Corollary}
\newtheorem{lemma}[theorem]{Lemma}
\newtheorem{proposition}[theorem]{Proposition}
\newtheorem{definition}[theorem]{Definition}
\newtheorem{example}[theorem]{Example}
\newtheorem{remark}[theorem]{Remark}
\title[Functions of bounded variation and Lipschitz algebras in metric measure spaces]{Functions of bounded variation and \\ Lipschitz algebras in metric measure spaces}
\author{Enrico Pasqualetto}
\address{Department of Mathematics and Statistics,
P.O.\ Box 35 (MaD), FI-40014 University of Jyvaskyla}
\email{enrico.e.pasqualetto@jyu.fi}
\author{Giacomo Enrico Sodini}
\address{Institut fur Mathematik - Fakult\"{a}t f\"{u}r Mathematik - Universit\"{a}t Wien,
Oskar-Morgenstern-Platz 1, 1090 Wien (Austria)}
\email{giacomo.sodini@univie.ac.at}
\begin{document}
\date{\today} 
\keywords{Functions of bounded variation; Lipschitz algebras; metric measure spaces; derivations}
\subjclass[2020]{53C23, 26A45, 49J52, 46E35, 46N10}
\begin{abstract}
Given a unital algebra $\mathscr A$ of locally Lipschitz functions defined over a metric measure space $({\rm X},{\sf d},\mathfrak m)$,
we study two associated notions of function of bounded variation and their relations: the space ${\rm BV}_{\rm H}({\rm X};\mathscr A)$,
obtained by approximating in energy with elements of $\mathscr A$, and the space ${\rm BV}_{\rm W}({\rm X};\mathscr A)$, defined through
an integration-by-parts formula that involves derivations acting in duality with $\mathscr A$. Our main result provides a sufficient
condition on the algebra $\mathscr A$ under which ${\rm BV}_{\rm H}({\rm X};\mathscr A)$ coincides with the standard metric BV space
${\rm BV}_{\rm H}({\rm X})$, which corresponds to taking as $\mathscr A$ the collection of all locally Lipschitz functions. Our result
applies to several cases of interest, for example to Euclidean spaces and Riemannian manifolds equipped with the algebra
of smooth functions,
or to Banach and Wasserstein spaces equipped with the algebra of cylinder functions.
Analogous results for metric Sobolev spaces ${\rm H}^{1,p}$ of exponent $p\in(1,\infty)$ were previously obtained by several different authors.
\end{abstract}
\maketitle
\tableofcontents
\section{Introduction}
\subsection{General overview}
For more than two decades, functions of bounded variation and sets of finite perimeter have been studied in the general setting of metric measure spaces,
starting from the paper \cite{Mir:03}. Rather refined results are available on those metric measure spaces (often called PI spaces) that are doubling
and support a weak Poincar\'{e} inequality (see e.g.\ \cite{Mir:03,Amb:01,Amb:02,Kin:Kor:Lor:Sha:13,Kin:Kor:Sha:Tuo:14,Lah:20}), and even deeper
structural properties have been proven when also a lower synthetic Ricci curvature bound is imposed, in the class of the so-called \({\sf RCD}(K,N)\)
spaces \cite{ambrosio2018rigidity,bru2019rectifiability,BruPasSem21-constantcodimension,Bre:Gig:23,Ant:Bre:Pas:24}.
However, a fully consistent theory of BV functions does not require any additional assumptions on the metric measure space under
consideration, and that will be our object of study in the present paper.
\medskip

The first notion of metric BV space, introduced by Miranda Jr.\ in \cite{Mir:03}, is formulated in terms of an energy approximation by locally Lipschitz
functions. However, the algebra of locally Lipschitz functions depends solely on the metric structure of the ambient space, and as such it is not capable
of detecting some additional features that the underlying metric space may have. Due to this reason, in several cases of interest -- which we shall briefly
mention in Section \ref{s:motivations} -- it is desirable to know that a smaller, specific algebra of approximating functions can be used to define
the BV space. This is exactly the primary goal of the present paper: to give a sufficient (and effective) condition on some algebra of functions in order
that the corresponding BV space (obtained via approximation) coincides with the original one that was defined in terms of locally Lipschitz functions.
\medskip

More than ten years after the BV space via approximation was developed, Ambrosio and Di Marino introduced in \cite{Amb:DiMa:14} another notion of metric
BV space and showed its equivalence with the former. Their definition is expressed in terms of the behaviour of functions along suitably-selected curves,
where the exceptional curve families are detected using the so-called test plans. A third approach was then proposed by Di Marino in \cite{DiMar:14,DiMaPhD:14},
by means of an integration-by-parts formula involving an appropriate notion of derivation with divergence. Also the latter approach turned out to be
equivalent to the first two. Generalised versions of the BV space via derivations will have a key role in this paper, as we shall discuss more in details later.
Finally, we mention that a fourth notion of metric `Newtonian-type' BV space (that we will not employ in this paper) was introduced more recently by Martio
in \cite{Martio16,Martio16-2}. His notion is based upon the behaviour of functions along \(AM\)-almost every rectifiable curve, where \(AM\) denotes the
so-called approximation modulus. The full equivalence of this latter approach with the other ones we discussed above was proved in \cite{Nob:Pas:Sch:22}.
\subsection{Contents of the paper}
Let \((\X,\sfd,\mm)\) be a metric measure space (as in Definition \ref{def:mms} below). For any locally Lipschitz function \(f\colon\X\to\R\), we denote
by \(\lip_a(f)\colon\X\to[0,+\infty)\) its asymptotic slope function (see \eqref{eq:def_lip_a}), which assigns to each point \(x\in\X\) the `infinitesimal
Lipschitz constant' of \(f\) at \(x\). The algebra of test functions we will consider in the various definitions of BV space is that of bounded
locally Lipschitz functions whose asymptotic slope is integrable, which we denote by \(\LIP_\star(\X)\); see Definition \ref{def:algebra_LIP_star}.
Given any unital separating subalgebra \(\mathscr A\) of \(\LIP_\star(\X)\), we define:
\begin{itemize}
\item The space \({\rm BV}_{\rm H}(\X;\mathscr A)\) of those functions \(f\in L^1(\mm)\) that can be approximated in energy by elements of \(\mathscr A\);
see Definition \ref{def:bvh}. Any given \(f\in{\rm BV}_{\rm H}(\X;\mathscr A)\) is associated with a quantity \(\|{\bf D}f\|_{*,\mathscr A}\in[0,+\infty)\),
which we call its total variation (see \eqref{eq:bvh_tv}). In the distinguished case \(\mathscr A=\LIP_\star(\X)\), where we use the shorter notations
\({\rm BV}_{\rm H}(\X)\) and \(\|{\bf D}f\|_*\), our definition is consistent with the one of \cite{Mir:03,Amb:DiMa:14}. Whereas each function \(f\in{\rm BV}_{\rm H}(\X)\)
is also naturally associated with a total variation measure \(|{\bf D}f|_*\) satisfying \(|{\bf D}f|_*(\X)=\|{\bf D}f\|_*\) (see Definition \ref{def:BV_H_meas}),
we do not know whether it is possible to define some total variation measure associated to an arbitrary function \(f\in{\rm BV}_{\rm H}(\X;\mathscr A)\) when
\(\mathscr A\neq\LIP_\star(\X)\). Indeed, the fact that the set-valued function \(|{\bf D}f|_*\) as in Definition \ref{def:BV_H_meas} is actually
a \(\sigma\)-additive measure is due to the fact that \(\LIP_\star(\X)\) consists of a sufficiently vast class of locally Lipschitz functions, for which the results we will present in Appendix \ref{app:measure} are satisfied.
\item The space \({\rm BV}_{\rm W}(\X;\mathscr A)\) of those functions \(f\in L^1(\mm)\) that satisfy an integration-by-parts formula
in duality with the space \({\rm Der}^\infty_\infty(\X;\mathscr A)\) of Lipschitz \(\mathscr A\)-derivations \(b\colon\mathscr A\to L^1(\mm)\)
having divergence \(\div(b)\in L^\infty(\mm)\); see Definitions \ref{def:der}, \ref{def:div} and \ref{def:bvw}. Any \(f\in{\rm BV}_{\rm W}(\X;\mathscr A)\)
comes with a corresponding total \(\mathscr A\)-variation measure \(|{\bf D}f|_{\mathscr A}\), as it is shown in Proposition \ref{prop:equiv_|Df|}.
In the special case \(\mathscr A=\LIP_\star(\X)\), we just write \({\rm BV}_{\rm W}(\X)\) and \(|{\bf D}f|\) for the sake of brevity. The original definition in \cite{DiMar:14,DiMaPhD:14} corresponds to choosing as \(\mathscr A\) the space of all boundedly-supported
Lipschitz functions.
\end{itemize}
It is worth pointing out that -- differently from most of the previous literature on the topic -- in this paper we are not assuming completeness nor
separability of the ambient metric space \((\X,\sfd)\). The possible lack of separability does not cause major issues, since we still require the reference
measure \(\mm\) to be Radon (recall Definition \ref{def:mms}). On the other hand, allowing for non-complete metric spaces leads to additional
difficulties, as we will see in the sequel. We prefer to consider possibly non-complete spaces in order to include e.g.\ open domains in a given
ambient space. Non-separable spaces are also interesting e.g.\ in view of the potential generalisation of the theories we consider here to the setting of
extended metric-topological measure spaces \cite{AmbrosioErbarSavare16,Sav:22}. Let us also highlight that here we assume \(\mm\) to be a finite measure.
This choice was made mostly for convenience, as it allows us to work with unital subalgebras of \(\LIP_\star(\X)\); this would not be possible for infinite
reference measures, because constant functions are integrable only with respect to a finite measure. Extending our definitions and results to metric spaces
equipped with a (possibly infinite) boundedly-finite Radon measure would be interesting, but outside the scope of this paper.

\medskip
The main results of the present paper can be briefly summarised as follows:
\begin{itemize}
\item For any metric measure space \((\X,\sfd,\mm)\) and any unital separating subalgebra \(\mathscr A\subseteq\LIP_\star(\X)\), we prove in Theorem \ref{thm:BV_W_in_BV_H} that
\[
{\rm BV}_{\rm W}(\X;\mathscr A)\subseteq{\rm BV}_{\rm H}(\X;\mathscr A),
\]
as well as the inequality \(\|{\bf D}f\|_{*,\mathscr A}\leq|{\bf D}f|_{\mathscr A}(\X)\) for every \(f\in{\rm BV}_{\rm W}(\X;\mathscr A)\).
The verification of this statement is based on a rather general duality argument in Convex Analysis, which is inspired by the proof of \cite[Theorem 5.4]{Pas:Tai:25} (and \cite[Theorem 3.3]{Luc:Pas:24}), where metric Sobolev spaces are involved instead.
\item Assuming either that \((\X,\sfd)\) is complete or that its associated topological space is a Radon space, we obtain in Theorem \ref{thm:equivalence_Lip} the identification
\[
{\rm BV}_{\rm H}(\X)={\rm BV}_{\rm W}(\X)
\]
together with the identity of measures \(|{\bf D}f|=|{\bf D}f|_*\) for every \(f\in{\rm BV}_{\rm H}(\X)\). At this level of generality, this result seems to be new.
In the case where \((\X,\sfd)\) is complete, we can also deduce from the results of \cite{Amb:DiMa:14} that every function \(f\in{\rm BV}_{\rm H}(\X)\) can be approximated
in energy by a sequence of bounded \emph{globally} Lipschitz functions; see Proposition \ref{prop:approx_with_glob_Lip}.
Here, the completeness assumption cannot be dropped, as we will showcase in Example \ref{ex:no_glob_approx}.
\item Assuming that \((\X,\sfd)\) is complete and \(\mathscr A\) is a \emph{good algebra}, we prove in Theorem \ref{thm:equivalence_A} that
\begin{equation}\label{eq:intro_equiv_BV_H}
{\rm BV}_{\rm H}(\X;\mathscr A)={\rm BV}_{\rm H}(\X),
\end{equation}
with \(\|{\bf D}f\|_{*,\mathscr A}=\|{\bf D}f\|_*\) for every \(f\in{\rm BV}_{\rm H}(\X)\). In other words, being a good algebra is a sufficient condition for \(\mathscr A\)
to be dense in energy in \({\rm BV}_{\rm H}(\X)\). By a `good algebra' we mean a unital separating subalgebra \(\mathscr A\) of \(\LIP_\star(\X)\) consisting of bounded
globally Lipschitz functions with the property that truncated distance functions from a point can be `approximated well in \({\rm H}^{1,1}\)' by elements of \(\mathscr A\),
where the Sobolev space \({\rm H}^{1,1}(\X;\mathscr A)\) we consider is the one discussed in Definition \ref{def:H11}; for details, we refer to Definition \ref{def:good_algebra},
where we will introduce the concept of good algebra. The proof of \eqref{eq:intro_equiv_BV_H} relies on the key approximation result Proposition \ref{prop:approx_with_A}, where
the assumption of \(\mathscr A\) being a good algebra is used in an essential way. Along the way, we also obtain that \({\rm BV}_{\rm H}(\X;\mathscr A)={\rm BV}_{\rm W}(\X;\mathscr A)\).
\end{itemize}
\subsection{Motivations and comparison with the previous literature}\label{s:motivations}
In studying functions of bounded variation defined using different algebras of locally Lipschitz functions, we have been strongly inspired
by the works \cite{Sav:22,Fo:Sa:So:23}, where unital separating subalgebras \(\mathscr A\) of bounded Lipschitz functions were used to define
metric Sobolev spaces \({\rm H}^{1,p}(\X;\mathscr A)\) of exponent \(p\in(1,\infty)\). It is shown in \cite{Sav:22} that -- in the more general
framework of extended metric-topological measure spaces -- if \(\mathscr A\) is a \emph{compatible algebra} (cf.\ with Example \ref{ex:good_alg}),
then \({\rm H}^{1,p}(\X;\mathscr A)\) coincides with the `standard' Sobolev space \({\rm H}^{1,p}(\X)\) defined via approximation with bounded
Lipschitz functions. Later on, a characterisation of those Lipschitz algebras \(\mathscr A\) for which \({\rm H}^{1,p}(\X;\mathscr A)={\rm H}^{1,p}(\X)\)
has been provided in \cite{Fo:Sa:So:23} (in the case where the ambient space is a metric measure space). Our main Theorem \ref{thm:equivalence_A}
can be regarded as a version of the above result for metric BV spaces, but with some differences:
\begin{itemize}
\item We give only a sufficient condition for the identification \({\rm BV}_{\rm H}(\X;\mathscr A)={\rm BV}_{\rm H}(\X)\), namely the fact that \(\mathscr A\)
is a good algebra. Conversely, one can easily realise (see Proposition \ref{prop:weakly_good_subalgebra}) that a necessary condition for \({\rm BV}_{\rm H}(\X;\mathscr A)={\rm BV}_{\rm H}(\X)\)
to hold is that \(\mathscr A\) is a \emph{weakly good subalgebra}, which we define in a similar way as good algebras, but where the approximation of truncated distance
functions is done in BV (as opposed to \({\rm H}^{1,1}\)). A natural guess is that \(\mathscr A\) being a weakly good subalgebra is in fact
\emph{equivalent} to \({\rm BV}_{\rm H}(\X;\mathscr A)={\rm BV}_{\rm H}(\X)\). However, we were not able to prove it, and thus we leave
it as an open problem. Another guess would be that \(\mathscr A\) being a good algebra is equivalent to \({\rm H}^{1,1}(\X;\mathscr A)={\rm H}^{1,1}(\X)\),
but this problem is -- arguably -- even more difficult, due to the functional-analytic complications one encounters
when studying metric Sobolev spaces of exponent \(p=1\) (cf.\ with \cite{Amb:Iko:Luc:Pas:24}).
\item Our proof strategy for Theorem \ref{thm:equivalence_A} is different from the one of \cite{Sav:22,Fo:Sa:So:23}. In the latter works, the identification
results are obtained directly at the level of the spaces \({\rm H}^{1,p}(\X;\mathscr A)\), making use of (generalised) Hopf--Lax flow techniques. In this paper,
instead, we first prove equivalence results for metric BV spaces defined using derivations, and then we show their equivalence with the BV spaces defined
via approximation. As a theory of derivations on extended metric-topological measure spaces has been recently developed
in \cite{Pas:Tai:25}, we expect that it is possible to extend many of the notions and results of this paper to that setting.
\end{itemize}
We conclude the introduction by discussing some motivations behind our interest in good algebras. As we will observe in Example \ref{ex:good_alg},
the family of good algebras includes all the compatible algebras considered by Savar\'{e} in \cite{Sav:22}, thus in particular the algebra of smooth
functions in the Euclidean space (or in any Riemannian manifold), as well as the algebra of \emph{cylinder functions} in a Banach space or in
Wasserstein/Hellinger/Hellinger--Kantorovich spaces. As it is evident e.g.\ from the works \cite{DiMar:Gig:Pas:Sou:20,Sav:22,Fo:Sa:So:23,Sod:23, Ds:So:25},
whenever the ambient space possesses `nice' structural features, in order to investigate fine analytic properties it is of fundamental importance to detect
a family of regular functions, which are associated with a `geometric' notion of differential. We point out that Theorem \ref{thm:equivalence_A}
was already known to be true in two cases: for \(\X=\R^n\) with the Euclidean norm, \(\mm\) a Radon measure on \(\R^n\) and \(\mathscr A=C^\infty_c(\R^n)\)
the algebra of compactly-supported smooth functions, see \cite[Theorem 5.7]{Gel:Luc:23}; for \(\X=\mathbb B\) a separable Banach space,
\(\mm\) a finite Borel measure on \(\mathbb B\) and \(\mathscr A={\rm Cyl}(\mathbb B)\) the algebra of smooth cylindrical functions, see \cite[Theorem 4.1]{Pas:24-3}.
\subsection*{Acknowledgements}
The first named author was supported by the Research Council of Finland grant 362898. We thank the reviewer for their useful comments, which have enhanced the quality of the manuscript.
\section{Preliminaries}
Let us begin by fixing some terminology and conventions. Throughout this paper, by an \textbf{algebra} we mean a commutative,
associative algebra over the field \(\R\) of real numbers. We say that an algebra \(\mathscr A\) is \textbf{unital}
if it has a multiplicative unit. Whenever we say that \(\mathscr A'\) is a unital subalgebra of \(\mathscr A\), we implicitly
make the assumption that \(\mathscr A\) and \(\mathscr A'\) share the same multiplicative unit.
\medskip

Given any real number \(a\in\R\), we denote by \(a^+\coloneqq a\vee 0\) and \(a^-\coloneqq -(a\wedge 0)\) its \textbf{positive part}
and its \textbf{negative part}, respectively. Given a set \(\X\), we indicate by \(\1_S\) the \textbf{characteristic function}
of a subset \(S\) of \(\X\), i.e.\ we define \(\1_S(x)\coloneqq 1\) for every \(x\in S\) and \(\1_S(x)\coloneqq 0\) for every
\(x\in\X\setminus S\).
\medskip

For any two Banach spaces \(\mathbb B_1\) and \(\mathbb B_2\), we denote by \(\mathcal L(\mathbb B_1;\mathbb B_2)\) the vector
space of all bounded linear operators between \(\mathbb B_1\) and \(\mathbb B_2\). Recall that \(\mathcal L(\mathbb B_1;\mathbb B_2)\)
is a Banach space with respect to the operator norm, given by \(\|T\|_{\mathcal L(\mathbb B_1;\mathbb B_2)}\coloneqq
\sup\{\|T(v)\|_{\mathbb B_2}:v\in\mathbb B_1,\,\|v\|_{\mathbb B_1}\leq 1\}\) for all \(T\in\mathcal L(\mathbb B_1;\mathbb B_2)\).
We denote by \(\mathbb B':=\mathcal L(\mathbb B;\R)\) the topological dual of a Banach space \(\mathbb B\).
\subsection{Metric/measure spaces}
In this section, we discuss several concepts and results concerning spaces that are equipped with a distance and/or a measure.
Given a measure space \((\X,\Sigma,\mu)\) and an exponent \(p\in[1,\infty]\), we denote by \((L^p(\mu),\|\cdot\|_{L^p(\mu)})\)
the \textbf{Lebesgue space} of exponent \(p\) over \((\X,\Sigma,\mu)\). We write \(L^p(\mu)^+\) to indicate the set of
all \(f\in L^p(\mu)\) satisfying \(f\geq 0\) \(\mu\)-a.e.\ on \(\X\). We also recall that \(L^p(\mu)\) is a Riesz space
if endowed with the usual pointwise \(\mu\)-a.e.\ order. Under suitable assumptions (for example, if \(\mu\) is \(\sigma\)-finite),
it holds that \(L^p(\mu)\) is a Dedekind complete lattice, which means that every order-bounded subset of \(L^p(\mu)\)
has a supremum and an infimum. For any set \(\rm M\) of non-negative measures on \((\X,\Sigma)\), we denote its
\textbf{supremum measure} by \(\bigvee{\rm M}\):
\[
\bigvee{\rm M}(E)\coloneqq\sup\bigg\{\sum_{n\in\N}\mu_n(E_n)\;\bigg|\;(\mu_n)_{n\in\N}\subseteq{\rm M},
\,(E_n)_{n\in\N}\subseteq\Sigma\text{ partition of }E\bigg\}\quad\text{ for every }E\in\Sigma.
\]
We recall that \(\bigvee{\rm M}\) is the least measure on \((\X,\Sigma)\) satisfying \(\bigvee{\rm M}\geq\mu\)
for every \(\mu\in{\rm M}\), where the partial order \(\leq\) on the set of all non-negative measures on \((\X,\Sigma)\)
is defined in the following way: we declare that \(\mu\leq\nu\) if and only if \(\mu(E)\leq\nu(E)\) for every \(E\in\Sigma\).
\medskip

Let \((\X,\sfd)\) be a given metric space. We denote by \(\rmC(\X)\) the unital algebra (with unit \(\1_\X\)) of all real-valued
continuous functions on \(\X\). We also consider its unital subalgebra \(\rmC_b(\X)\) consisting of all bounded continuous
functions. If \((\mu_n)_{n\in\N}\) and \(\mu\) are finite non-negative Borel measures on \(\X\), then we say that \(\mu_n\)
\textbf{weakly converges} to \(\mu\) if
\[
\int f\,\d\mu=\lim_{n\to\infty}\int f\,\d\mu_n\quad\text{ for every }f\in\rmC_b(\X).
\]We denote by \(\mathcal M(\X)\) the vector space of all finite, signed \textbf{Radon measures} on \(\X\). Then
\(\mathcal M(\X)\) is a Banach space if endowed with the total variation norm.
We denote by \(\mathcal M_+(\X)\) the set of all those measures \(\mu\in\mathcal M(\X)\) that are non-negative.
To any measure \(\mu\in\mathcal M(\X)\), we associate its \textbf{total variation measure} \(|\mu|\in\mathcal M_+(\X)\).
The \textbf{support} of a measure \(\mu\in\mathcal M(\X)\) is the closed subset \({\rm spt}(\mu)\) of \(\X\) given by
\[
{\rm spt}(\mu)\coloneqq\big\{x\in\X\;\big|\;|\mu|(B_r(x))>0\text{ for every }r>0\big\},
\]
where \(B_r(x)\coloneqq\{y\in\X:\sfd(x,y)<r\}\) denotes the open ball in \(\X\) of center \(x\) and radius \(r\).
Since the inner regularity of \(\mu\) ensures that \(\mu\) is concentrated on a \(\sigma\)-compact set, it can be readily checked
that \({\rm spt}(\mu)\) is separable and \(\mu\) is concentrated on \({\rm spt}(\mu)\). With a slight abuse of terminology,
we say that a metric space \((\X,\sfd)\) is \textbf{Radon} if its induced topological space is a Radon space, i.e.\ every
finite Borel measure defined on it is a Radon measure. Since all Souslin spaces are Radon spaces, we have that every (Borel
subset of a) complete and separable metric space is Radon. See e.g.\ \cite[Theorem 7.4.3, Section 7.14(vii)]{Bog:07} and the references therein for
a more detailed discussion.
\medskip

A distinguished subalgebra of \(\rmC(\X)\) is the space of all real-valued \textbf{locally Lipschitz} functions on \(\X\), which is given by
\[
\LIP_{loc}(\X)\coloneqq\bigg\{f\in\rmC(\X)\;\bigg|\;\inf_{r>0}\Lip(f;B_r(x))<+\infty\text{ for every }x\in\X\bigg\},
\]
where the \textbf{Lipschitz constant} \(\Lip(f;E)\) of the function \(f\) on a set \(E\subseteq\X\) is defined as
\[
\Lip(f;E)\coloneqq\sup\bigg\{\frac{|f(x)-f(y)|}{\sfd(x,y)}\;\bigg|\;x,y\in E,\,x\neq y\bigg\}\in[0,+\infty],
\]
where we adopt the convention that \(\sup(\varnothing)\coloneqq 0\). The \textbf{asymptotic slope}
\(\lip_a(f)\colon\X\to[0,+\infty)\) of a function \(f\in\LIP_{loc}(\X)\) is given by
\begin{equation}\label{eq:def_lip_a}
\lip_a(f)(x)\coloneqq\inf_{r>0}\Lip(f;B_r(x))\quad\text{ for every }x\in\X.
\end{equation}
It is easy to check that \(\lip_a(f)\) is upper semicontinuous (thus, Borel measurable). The space of all
(globally) Lipschitz functions, i.e.\ of those \(f\in\LIP_{loc}(\X)\) with \(\Lip(f)\coloneqq\Lip(f;\X)<+\infty\), is
denoted by \(\LIP(\X)\). Note that \(\LIP_b(\X)\coloneqq\rmC_b(\X)\cap\LIP(\X)\) is a unital subalgebra of \(\rmC_b(\X)\).
\medskip

In this paper, we focus on the following class of metric measure spaces:
\begin{definition}[Metric measure space]\label{def:mms}
We say that a triple \((\X,\sfd,\mm)\) is a \textbf{metric measure space} if \((\X,\sfd)\) is a metric space
and \(\mm\in\mathcal M_+(\X)\) (i.e.\ \(\mm\geq 0\) is a finite Radon measure on \(\X\)).
\end{definition}

We point out that this notion of metric measure space is different from other approaches considered
in the literature. In this paper, the following subalgebra of \(\rmC_b(\X)\) will have a key role.
\begin{definition}[The algebra \(\LIP_\star(\X)\)]\label{def:algebra_LIP_star}
Let \((\X,\sfd,\mm)\) be a metric measure space. Then we define
\[
\LIP_\star(\X)\coloneqq\big\{f\in\rmC_b(\X)\cap\LIP_{loc}(\X)\;\big|\;\lip_a(f)\in L^1(\mm)\big\}.
\]
\end{definition}

Note that \(\LIP_\star(\X)\) depends both on the distance \(\sfd\) and the measure \(\mm\), even though
we do not indicate it for brevity. Since \(\LIP_\star(\X)\) is a vector subspace of \(\rmC_b(\X)\) and
\(fg\in\LIP_\star(\X)\) for every \(f,g\in\LIP_\star(\X)\) (as \(\lip_a(fg)\leq|f|\lip_a(g)+|g|\lip_a(f)\in L^1(\mm)\)),
we have that \(\LIP_\star(\X)\) is a unital subalgebra of \(\rmC_b(\X)\). We will work also with unital \textbf{separating}
subalgebras \(\mathscr A\) of \(\LIP_\star(\X)\), i.e.
\[
\sup_{f\in\mathscr A}|f(x)-f(y)|>0\quad\text{ for every }x,y\in\X\text{ with }x\neq y.
\]
By the Stone--Weierstrass theorem, if \(\mathscr A\) is a unital separating subalgebra of \(\LIP_\star(\X)\), then
\begin{equation}\label{eq:Stone-Weiestrass}
\mathscr A\text{ is dense in }L^p(\mm)\text{ for every }p\in[1,\infty),
\end{equation}
see e.g.\ \cite[Lemma 2.1.27]{Sav:22} (notice that, even if there the algebra $\mathscr A$ is assumed to be \emph{compatible}, one can easily check that the simple proof of the mentioned Lemma does not require additional assumptions on $\mathscr A$).  It can be readily checked that \(\LIP_\star(\X)\) itself is a unital separating algebra. It is essentially because of the result in \eqref{eq:Stone-Weiestrass} that in the paper we will always assume that the algebras we are working with are unital and separating: both assumptions are indeed necessary to prove density results, which ultimately rely on the Stone--Weierstrass theorem.
Furthermore, the fact that the algebras \(\mathscr A\) that we consider contain all constant functions (as follows from the assumption that \(\mathscr A\) is
unital) will be implicitly used throughout the paper in several truncation arguments (for example, in the proof of Lemma \ref{lem:part_of_unity}). Indeed, when we
apply \cite[Corollary 2.1.24]{Sav:22} to approximate a function of the form \(f\wedge\lambda\), with \(f\in\mathscr A\) and \(\lambda\in\R\), with another function in \(\mathscr A\)
we rely on the fact that \(\lambda\1_X\in\mathscr A\). As we pointed out in the Introduction, allowing for constant functions in \(\mathscr A\), we are forced to
consider only finite reference measures \(\mm\) (to make sure that all functions in \(\mathscr A\) are integrable). It would be interesting, but outside the scopes of this paper
and likely quite technical, to generalise some of the presented results to suitable (possibly non-unital) algebras and non-finite reference measures.
\medskip

In the sequel, we will need a partition-of-unity-type result involving functions of a given unital separating subalgebra of \(\LIP_\star(\X)\),
see Lemma \ref{lem:part_of_unity} below. To prove it, we will use the following simple consequence of Weierstrass' approximation theorem:
\begin{lemma}\label{lem:weier}
Let $a,b \in \R$ with $a<b$ and let $f \in \rmC([a,b])$ be such that $f(t) > 0$ for every $t \in [a,b]$. Then there exists a sequence $(p_n)_n$ of polynomials such that $0 \le p_n(t) \le f(t)$ for every $t \in [a,b]$ and every $n \in \N$, and $p_n$ converges uniformly on $[a,b]$ to $f$ as $n \to \infty$.
\end{lemma}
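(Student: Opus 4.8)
The plan is to reduce the statement to the standard Weierstrass approximation theorem by a simple reparametrisation/renormalisation trick. First I would set $m \coloneqq \min_{t \in [a,b]} f(t)$, which is strictly positive by compactness of $[a,b]$ and continuity and positivity of $f$. The idea is that we cannot directly apply Weierstrass to $f$ and expect the approximating polynomials to lie below $f$, so instead I would approximate a slightly shrunk version of $f$.

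Here is the main step. For each $n \in \N$, apply the classical Weierstrass approximation theorem to obtain a polynomial $q_n$ with $\sup_{t \in [a,b]} |q_n(t) - f(t)| \le \tfrac{1}{n}$. Then define $p_n \coloneqq \bigl(1 - \tfrac{2}{n m}\bigr)^+ q_n$ (or, to avoid the sign issue for small $n$, simply discard the finitely many indices with $n \le 2/m$ and relabel). For $n$ large enough that $\tfrac{2}{nm} \le 1$, we have for every $t \in [a,b]$ that
\[
p_n(t) = \Bigl(1 - \tfrac{2}{nm}\Bigr) q_n(t) \le \Bigl(1 - \tfrac{2}{nm}\Bigr)\Bigl(f(t) + \tfrac1n\Bigr) \le f(t) + \tfrac1n - \tfrac{2}{nm} f(t) \le f(t) + \tfrac1n - \tfrac{2}{n} < f(t),
\]
using $f(t) \ge m$ in the penultimate inequality; and similarly $p_n(t) \ge \bigl(1 - \tfrac{2}{nm}\bigr)\bigl(f(t) - \tfrac1n\bigr) \ge \bigl(1 - \tfrac{2}{nm}\bigr)\bigl(m - \tfrac1n\bigr) \ge 0$ for $n$ large (once $n \ge 1/m$, so that $m - \tfrac1n \ge 0$). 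Finally, uniform convergence $p_n \to f$ follows since $\|p_n - f\|_\infty \le \|p_n - q_n\|_\infty + \|q_n - f\|_\infty \le \tfrac{2}{nm}\|q_n\|_\infty + \tfrac1n$, and $\|q_n\|_\infty$ is bounded (e.g.\ by $\|f\|_\infty + 1$), so the right-hand side tends to $0$.

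I do not expect any serious obstacle here; this is essentially bookkeeping. The only mild subtlety is handling the finitely many small values of $n$ for which the correction factor $1 - \tfrac{2}{nm}$ is negative or for which $m - \tfrac1n < 0$: since the statement only asks for \emph{a} sequence of polynomials, one can either truncate with a positive part, or simply re-index so that $n$ ranges over integers with $n > 2/m$, which costs nothing. Thus the proof amounts to: invoke compactness to get $m > 0$, invoke Weierstrass to get $q_n$, scale down by a factor tending to $1$ chosen so that the two-sided estimate above goes through, and conclude.
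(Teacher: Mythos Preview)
Your proof is correct and follows essentially the same strategy as the paper's: set \(m=\min_{[a,b]}f>0\) by compactness, invoke Weierstrass to obtain polynomials \(q_n\) with \(\|q_n-f\|_\infty<1/n\), and then apply a small correction that vanishes as \(n\to\infty\) to force \(0\le p_n\le f\). The only cosmetic difference is that the paper uses an additive shift, setting \(p_n\coloneqq q_{n+n_0}-\tfrac{1}{n+n_0}\) with \(n_0=\lceil 2/m\rceil\), whereas you use a multiplicative scaling \(p_n=(1-\tfrac{2}{nm})q_n\); both achieve the required two-sided bound and uniform convergence by elementary estimates, and the handling of small \(n\) is trivial in either case.
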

\begin{proof} Let $\alpha:= \min_{[a,b]} f>0$. For every $n \in \N$ with $n>n_0\coloneqq\lceil 2/\alpha \rceil$, by the Weierstrass
approximation theorem, there is a polynomial $\tilde p_n$ such that $\|f-\tilde p_n\|_{\rmC_b([a,b])}<1/n$. It is then enough
to set $p_n\coloneqq\tilde p_{n_0+n} -\frac{1}{n+n_0}$ for every $n \in \N$.
\end{proof}
\begin{lemma}\label{lem:part_of_unity}
Let \((\X,\sfd,\mm)\) be a metric measure space, \(\mathscr A\) a unital separating subalgebra of \(\LIP_\star(\X)\) and
\(E_1,\ldots,E_n\) a Borel partition of \(\X\). Fix any \(\varepsilon>0\).
Then there exist \(\eta_1,\ldots,\eta_n\in\mathscr A\) such that the following properties hold:
\begin{itemize}
\item[\(\rm i)\)] \(0\leq\eta_i\leq 1\) for every \(i=1,\ldots,n\).
\item[\(\rm ii)\)] \(\sum_{i=1}^n\eta_i(x)\leq 1\) for every \(x\in\X\).
\item[\(\rm iii)\)] \(\|\eta_i-\1_{E_i}\|_{L^1(\mm)}\leq\varepsilon\) for every \(i=1,\ldots,n\).
\end{itemize}
\end{lemma}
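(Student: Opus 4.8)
The strategy is to start from the density of $\mathscr A$ in $L^1(\mm)$ (statement \eqref{eq:Stone-Weiestrass}) and upgrade approximations in $L^1$ to approximations that also satisfy the pointwise constraints i) and ii). The key point is that i) and ii) are stable under composing with suitable Lipschitz functions of one or several variables, and $\mathscr A$ is an algebra, so we can use Lemma \ref{lem:weier} to realize such compositions (or good-enough approximations of them) within $\mathscr A$.

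First I would pick, for each $i=1,\dots,n$, a function $g_i\in\mathscr A$ with $\|g_i-\1_{E_i}\|_{L^1(\mm)}$ as small as we like; this is possible by \eqref{eq:Stone-Weiestrass}. These $g_i$ need not take values in $[0,1]$ nor sum to at most $1$. To fix the range, note that $\varphi(t)\coloneqq (t\vee 0)\wedge 1$ is $1$-Lipschitz on $\R$ and maps into $[0,1]$; since $g_i$ is bounded, say with values in a compact interval $[a_i,b_i]$, I would approximate $\varphi$ uniformly on $[a_i,b_i]$ by polynomials — here one has to be a little careful because Lemma \ref{lem:weier} approximates strictly positive continuous functions from below, so I would instead apply it to $\psi(t)\coloneqq\varphi(t)+\delta$ (strictly positive) or, more simply, shift and rescale, and use that $1$-Lipschitz functions into $[0,1]$ can be uniformly approximated by polynomials taking values in $[0,1]$. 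Composing $g_i$ with such a polynomial yields $\tilde\eta_i\in\mathscr A$ (the algebra being closed under polynomial composition) with $0\le\tilde\eta_i\le 1$ and still $\|\tilde\eta_i-\1_{E_i}\|_{L^1(\mm)}$ small, because $\varphi\circ\1_{E_i}=\1_{E_i}$ and $|\varphi(s)-\varphi(t)|\le|s-t|$ gives $\|\varphi\circ g_i-\1_{E_i}\|_{L^1(\mm)}\le\|g_i-\1_{E_i}\|_{L^1(\mm)}$.

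Next I would enforce ii). The function $\Phi(t_1,\dots,t_n)\coloneqq\min\{1,\ \text{a suitable reallocation}\}$ that I actually want is the one sending a vector in $[0,1]^n$ to itself if $\sum t_i\le 1$ and otherwise rescaling; a cleaner choice is the iterated truncation $\eta_1\coloneqq\tilde\eta_1$, $\eta_i\coloneqq\tilde\eta_i\wedge\bigl(1-\sum_{j<i}\eta_j\bigr)^+$, which is built from the $\tilde\eta_i$ by composing with Lipschitz functions $(s,t)\mapsto s\wedge(1-t)^+$ of two variables. Each such operation is globally Lipschitz and, restricted to $[0,1]^n$ (a compact set containing all the relevant values), can be uniformly approximated by polynomials in $n$ variables by the multivariate Weierstrass theorem; composing with these polynomials keeps us inside $\mathscr A$. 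Since on $\X\setminus\bigcup_i E_i=\varnothing$ there is nothing to check and on each $E_j$ one has $\1_{E_i}=\delta_{ij}$, which is a fixed point of all these truncation maps, the $L^1$-errors only get worse by a controlled (dimension-dependent, i.e.\ $\le 2^n$ or so) factor times $\max_i\|\tilde\eta_i-\1_{E_i}\|_{L^1(\mm)}$. Choosing the initial errors small enough relative to $\eps$ and this factor gives iii).

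The main obstacle is the bookkeeping around Lemma \ref{lem:weier}: it only provides polynomial approximations from below of \emph{strictly positive} continuous functions, whereas the truncation maps I want to compose with vanish on part of their domain. The fix is to add a small positive constant before applying the lemma and absorb it into the final error budget, or to observe that the only properties we truly need are (a) the composition lands in $[0,1]$ and (b) it is close in sup-norm on the relevant compact set to the honest truncation; both are achievable by elementary polynomial approximation on a compact set, and the monotonicity "from below" in Lemma \ref{lem:weier} is a convenient way to guarantee (a) without extra case analysis. Everything else — the algebra being closed under composition with polynomials, finiteness of $\mm$, and the triangle inequality in $L^1(\mm)$ — is routine.
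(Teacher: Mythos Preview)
Your plan is workable but takes a different and more laborious route than the paper. Both start from the density of \(\mathscr A\) in \(L^1(\mm)\) and the fact (which the paper simply imports from \cite[Corollary 2.1.24]{Sav:22}, and which you rederive by hand) that one can arrange approximants \(\psi_i\in\mathscr A\) of \(\1_{E_i}\) already satisfying \(0\le\psi_i\le 1\). The divergence is in how to enforce ii). You propose an iterated truncation \(\eta_1\coloneqq\tilde\eta_1\), \(\eta_i\coloneqq\tilde\eta_i\wedge(1-\sum_{j<i}\eta_j)^+\), realized inside \(\mathscr A\) via multivariate polynomial approximation; since these truncation maps vanish on part of their domain, Lemma~\ref{lem:weier} does not apply directly, and maintaining the \emph{exact} inequalities \(\eta_i\ge 0\) and \(\sum_i\eta_i\le 1\) through \(n\) successive approximations requires the shift-and-rescale juggling you sketch, with errors compounding by a factor depending on \(n\).

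The paper instead normalizes in one stroke: since \(S\coloneqq\sum_j\psi_j+\delta\) takes values in \([\delta,n+\delta]\) and \(t\mapsto 1/t\) is \emph{strictly positive} there, Lemma~\ref{lem:weier} applies cleanly and yields a polynomial \(p\) with \(0\le p(t)\le 1/t\) on that interval. Setting \(\eta_i\coloneqq p(S)\,\psi_i\in\mathscr A\), one gets \(\sum_i\eta_i=p(S)\sum_i\psi_i\le S^{-1}(S-\delta)\le 1\) and \(\eta_i\ge 0\) exactly, with i) following from ii). The \(L^1\) estimate is then a single triangle-inequality computation giving an error \(2(\mm(\X)+1)\delta\), independent of \(n\). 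So the paper's device is both shorter and tailored to the one-variable, from-below form of Lemma~\ref{lem:weier}, whereas your approach would need a genuine multivariate analogue or careful inductive bookkeeping.
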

\begin{proof}
Fix \(\delta>0\). Since \(\mathscr A\) is dense in \(L^1(\mm)\), for any \(i=1,\ldots,n\) we can find
\((\psi^k_i)_k\subseteq\mathscr A\) such that \(\1_{E_i}(x)=\lim_k\psi^k_i(x)\) for \(\mm\)-a.e.\ \(x\in\X\).
Thanks to \cite[Corollary 2.1.24]{Sav:22}, we can also assume that \(0\leq\psi^k_i\leq 1\). By Lemma \ref{lem:weier}, there is a polynomial \(p\) such that
\[
0\leq p(t)\leq\frac{1}{t},\qquad\bigg|p(t)-\frac{1}{t}\bigg|\leq\delta\quad\text{ for every }t\in[\delta,n+\delta].
\]
Since \(p\big(\sum_{j=1}^n\psi^k_j(x)+\delta\big)\to p(1+\delta)\) as \(k\to\infty\) for \(\mm\)-a.e.\ \(x\in\X\)
and \(p\circ\big(\sum_{j=1}^n\psi^k_j+\delta\big)\leq\frac{1}{\delta}\) holds \(\mm\)-a.e.\ on \(\X\)
for every \(k\in\N\), by the dominated convergence theorem we can find \(k_0\in\N\) such that, letting
\(\psi_i\coloneqq\psi^{k_0}_i\), it holds that
\(\big\|p\big(\sum_{j=1}^n\psi_j+\delta\big)-p(1+\delta)\big\|_{L^1(\mm)}\leq\delta\)
and \(\|\psi_i-\1_{E_i}\|_{L^1(\mm)}\leq\delta\) for every \(i=1,\ldots,n\). Now, let us define
\[
\eta_i\coloneqq p\bigg(\sum_{j=1}^n\psi_j+\delta\bigg)\psi_i\in\mathscr A\quad\text{ for every }i=1,\ldots,n.
\]
Notice that \(\eta_i\geq 0\) for every \(i=1,\ldots,n\) and \(\sum_{i=1}^n\eta_i\leq\big(\sum_{j=1}^n\psi_j+\delta\big)^{-1}\big(\sum_{i=1}^n\psi_i\big)\leq 1\),
which proves i) and ii). Finally, for any given \(i=1,\ldots,n\) we can estimate
\[\begin{split}
\|\eta_i-\1_{E_i}\|_{L^1(\mm)}&\leq\bigg\|p\bigg(\sum_{j=1}^n\psi_j+\delta\bigg)\psi_i-\psi_i\bigg\|_{L^1(\mm)}+\|\psi_i-\1_{E_i}\|_{L^1(\mm)}\\
&\leq\bigg\|p\bigg(\sum_{j=1}^n\psi_j+\delta\bigg)-\frac{1}{1+\delta}\bigg\|_{L^1(\mm)}+\bigg(1-\frac{1}{1+\delta}\bigg)\mm(\X)+\delta\\
&\leq\bigg\|p\bigg(\sum_{j=1}^n\psi_j+\delta\bigg)-p(1+\delta)\bigg\|_{L^1(\mm)}
+\bigg|p(1+\delta)-\frac{1}{1+\delta}\bigg|\mm(\X)+(\mm(\X)+1)\delta\\
&\leq 2(\mm(\X)+1)\delta,
\end{split}\]
which proves the validity of iii) if we choose \(\delta>0\) so that \(2(\mm(\X)+1)\delta\leq\varepsilon\).
\end{proof}
\subsection{\texorpdfstring{\(L^p\)}{Lp}-Banach \texorpdfstring{\(L^\infty\)}{Linfty}-modules}
Let us recall the language of \emph{\(L^p\)-Banach \(L^\infty\)-modules} that was introduced in \cite{Gig:18} (see also \cite{Gig:17}).
Following \cite[Definition 1.2.10]{Gig:18} (and with a slight change of terminology), we give the following definition:
\begin{definition}[\(L^p\)-Banach \(L^\infty\)-module]
Let \((\X,\Sigma,\mu)\) be a \(\sigma\)-finite measure space and \(p\in[1,\infty]\). Let \(\mathscr M\) be a module over the commutative ring \(L^\infty(\mu)\).
Then we say that \(\mathscr M\) is an \textbf{\(L^p(\mu)\)-Banach \(L^\infty(\mu)\)-module} if it is endowed with a map \(|\cdot|\colon\mathscr M\to L^p(\mu)^+\),
called \textbf{pointwise norm}, such that the following conditions are satisfied:
\begin{itemize}
\item[\(\rm i)\)] Given any \(v,w\in\mathscr M\) and \(f\in L^\infty(\mu)\), it holds that
\[\begin{split}
&|v|=0\quad\Longrightarrow\quad v=0,\\
&|v+w|\leq|v|+|w|,\\
&|fv|=|f||v|,
\end{split}\]
where all equalities and inequalities involving the pointwise norm are in the \(\mu\)-a.e.\ sense.
\item[\(\rm ii)\)] The \textbf{glueing property} holds, i.e.\ if \((E_n)_{n\in\N}\subseteq\Sigma\) is a partition of the set \(\X\) and
a sequence \((v_n)_{n\in\N}\subseteq\mathscr M\) satisfies \(\sum_{n\in\N}\|\1_{E_n}|v_n|\|_{L^p(\mu)}^p<+\infty\), then there exists
a (necessarily unique) element \(v=\sum_{n\in\N}\1_{E_n}v_n\in\mathscr M\) such that \(\1_{E_n}v=\1_{E_n}v_n\) for every \(n\in\N\).
\item[\(\rm iii)\)] The norm \(\|v\|_{\mathscr M}\coloneqq\||v|\|_{L^p(\mu)}\) on \(\mathscr M\) is complete.
\end{itemize}
\end{definition}

Observe that each \(L^p(\mu)\)-Banach \(L^\infty(\mu)\)-module is in particular a Banach space. We say that a subset \(S\) of \(\mathscr M\)
\textbf{generates} \(\mathscr M\) provided the \(L^\infty(\mu)\)-linear span of \(S\) is dense in \(\mathscr M\).
An \textbf{isomorphism} \(\Phi\colon\mathscr M\to\mathscr N\) between two \(L^p(\mu)\)-Banach \(L^\infty(\mu)\)-modules \(\mathscr M\)
and \(\mathscr N\) is an \(L^\infty(\mu)\)-linear bijection satisfying \(|\Phi(v)|=|v|\) for every \(v\in\mathscr M\).
\medskip

Since any given \(L^1(\mu)\)-Banach \(L^\infty(\mu)\)-module \(\mathscr M\) is a Banach space, we can consider its dual Banach space \(\mathscr M'\).
Alternatively, we can consider its dual in the sense of Banach modules: following \cite[Definition 1.2.6]{Gig:18}, we define \(\mathscr M^*\) as the set
of all \(L^\infty(\mu)\)-linear maps \(\omega\colon\mathscr M\to L^1(\mu)\) for which there exists a function \(g\in L^\infty(\mu)^+\) such that
\begin{equation}\label{eq:def_dual_mod}
|\omega(v)|\leq g|v|\text{ holds }\mu\text{-a.e.\ on }\X\text{ for every }v\in\mathscr M.
\end{equation}
Then \(\mathscr M^*\) is an \(L^\infty(\mu)\)-Banach \(L^\infty(\mu)\)-module if endowed with the pointwise operations and with
\[
|\omega|\coloneqq\bigwedge\big\{g\in L^\infty(\mu)^+\;\big|\;g\text{ satisfies \eqref{eq:def_dual_mod}}\big\}\in L^\infty(\mu)^+\quad\text{ for every }\omega\in\mathscr M^*.
\]
We say that \(\mathscr M^*\) is the \textbf{module dual} of \(\mathscr M\). The relation between \(\mathscr M'\) and \(\mathscr M^*\) is
clarified by the following result: letting \(\textsc{Int}_{\mathscr M}\colon\mathscr M^*\to\mathscr M'\) be the operator given by
\[
\textsc{Int}_{\mathscr M}(\omega)(v)\coloneqq\int\omega(v)\,\d \mu\quad\text{ for every }\omega\in\mathscr M^*\text{ and }v\in\mathscr M,
\]
we know from \cite[Proposition 1.2.13]{Gig:18} that
\begin{equation}\label{eq:Int}
\textsc{Int}_{\mathscr M}\text{ is an isometric isomorphism of Banach spaces.}
\end{equation}
\subsection{Lipschitz derivations}
In this paper, we consider \emph{Lipschitz derivations} in the sense of \cite{DiMar:14,DiMaPhD:14}.
Borrowing from \cite[Section 1.1]{DiMar:14} and \cite[Section 4.1]{Amb:Iko:Luc:Pas:24}, we give the ensuing definitions:
\begin{definition}[Derivation]\label{def:der}
Let \((\X,\sfd,\mm)\) be a metric measure space. Let \(\mathscr A\) be a unital separating subalgebra of \(\LIP_\star(\X)\).
Then a map \(b\colon\mathscr A\to L^1(\mm)\) is called a \textbf{Lipschitz \(\mathscr A\)-derivation} if:
\begin{itemize}
\item[\(\rm i)\)] There exists \(G\in L^\infty(\mm)^+\) such that \(|b(f)|\leq G\,\lip_a(f)\) holds \(\mm\)-a.e.\ for every \(f\in\mathscr A\).
\item[\(\rm ii)\)] The map \(b\) is a linear operator that satisfies the \textbf{Leibniz rule}, i.e.
\[
b(fg)=f\,b(g)+g\,b(f)\quad\text{ for every }f,g\in\mathscr A.
\]
\end{itemize}
We denote by \(\Der^\infty(\X;\mathscr A)\) the space of all Lipschitz \(\mathscr A\)-derivations on \((\X,\sfd,\mm)\).
\end{definition}

The space \(\Der^\infty(\X;\mathscr A)\) is a module over the ring \(L^\infty(\mm)\), thus in particular it is a vector space.
Given any \(b\in\Der^\infty(\X;\mathscr A)\), we denote by \(|b|_{\mathscr A}\in L^\infty(\mm)^+\) the \(\mm\)-a.e.\ minimal
function \(G\) as in Definition \ref{def:der} i), whose existence is guaranteed by the Dedekind completeness of \(L^\infty(\mm)\).
The space \((\Der^\infty(\X;\mathscr A),|\cdot|_{\mathscr A})\) is an \(L^\infty(\mm)\)-Banach \(L^\infty(\mm)\)-module.
When \(\mathscr A=\LIP_\star(\X)\), we just write \(\Der^\infty(\X)\) and \(|b|\) for brevity.
\begin{definition}[Divergence]\label{def:div}
Let \((\X,\sfd,\mm)\) be a metric measure space. Let \(\mathscr A\) be a unital separating subalgebra of \(\LIP_\star(\X)\).
Then we say that \(b\in\Der^\infty(\X;\mathscr A)\) has \textbf{divergence} \(\div(b)\in L^\infty(\mm)\) if
\[
\int b(f)\,\d\mm=-\int f\,\div(b)\,\d\mm\quad\text{ for every }f\in\mathscr A.
\]
We denote by \(\Der^\infty_\infty(\X;\mathscr A)\) the space of all derivations \(b\in\Der^\infty(\X;\mathscr A)\) having divergence.
\end{definition}
\begin{remark}[Independence of \(\div(b)\) from $\mathscr A$]{\rm
As \(\mathscr A\) is dense in \(L^1(\mm)\) by \eqref{eq:Stone-Weiestrass}, we have that \(\div(b)\) is uniquely determined.
In principle, \(\div(b)\) may depend on the choice of the algebra \(\mathscr A\). However, it is easy to check
that if \(\mathscr A\subseteq\tilde{\mathscr A}\) are unital separating subalgebras of \(\LIP_\star(\X)\) and
\(b\in\Der^\infty_\infty(\X;\tilde{\mathscr A})\), then \(b\in\Der^\infty_\infty(\X;\mathscr A)\) and
the two notions of divergence (in duality with \(\mathscr A\) and \(\tilde{\mathscr A}\), respectively) coincide.
Thus, with a slight abuse of notation, we will write only \(\div(b)\) without stressing the dependence on the chosen
algebra, since it will be clear from the context.
\fr}\end{remark}

Note also that the space \(\Der^\infty_\infty(\X;\mathscr A)\) is a module over the ring \(\mathscr A\),
thus in particular it is a vector subspace of \(\Der^\infty(\X;\mathscr A)\). We denote by
\[
\overline\Der^\infty_\infty(\X;\mathscr A)
\]
the closure of \(\Der^\infty_\infty(\X;\mathscr A)\) in \(\Der^\infty(\X;\mathscr A)\). Note that \(\overline\Der^\infty_\infty(\X;\mathscr A)\)
is a Banach space with respect to the norm induced by \(\Der^\infty(\X;\mathscr A)\).
\section{Metric \texorpdfstring{\(\rm BV\)}{BV} and \texorpdfstring{\({\rm H}^{1,1}\)}{H11} spaces}
\subsection{The space \texorpdfstring{\({\rm BV}_{\rm H}\)}{BVH}}
A metric BV space defined in terms of approximating (locally) Lipschitz functions was introduced in \cite{Mir:03}
and revisited in \cite{Amb:DiMa:14}. Here, we consider its following variant:
\begin{definition}[BV space via relaxation]\label{def:bvh}
Let \((\X,\sfd,\mm)\) be a metric measure space. Let \(\mathscr A\) be a unital separating subalgebra of \(\LIP_\star(\X)\).
Then we define \({\rm BV}_{\rm H}(\X;\mathscr A)\) as the space of all \(f\in L^1(\mm)\) for which there exists
\((f_n)_n\subseteq\mathscr A\) such that \(f_n\to f\) in \(L^1(\mm)\) and \(\sup_{n\in\N}\int\lip_a(f_n)\,\d\mm<+\infty\).
\end{definition}

The \textbf{total variation} of a function \(f\in{\rm BV}_{\rm H}(\X;\mathscr A)\) is given by the quantity
\begin{equation}\label{eq:bvh_tv}
\|{\bf D}f\|_{*,\mathscr A}\coloneqq\inf\bigg\{\liminf_{n\to\infty}\int\lip_a(f_n)\,\d\mm\;\bigg|\;
(f_n)_n\subseteq\mathscr A,\,f_n\to f\text{ in }L^1(\mm)\bigg\}<+\infty.
\end{equation}
Clearly, there is a sequence \((f_n)_n\subseteq\mathscr A\) such that \(f_n\to f\) in \(L^1(\mm)\) and
\(\int\lip_a(f_n)\,\d\mm\to\|{\bf D}f\|_{*,\mathscr A}\). The \textbf{Cheeger \(\mathscr A\)-energy} functional
\(\mathcal E_{\mathscr A}\colon L^1(\mm)\to[0,+\infty]\) is then defined as
\[
\mathcal E_{\mathscr A}(f)\coloneqq\left\{\begin{array}{ll}
\|{\bf D}f\|_{*,\mathscr A}\\
+\infty
\end{array}\quad\begin{array}{ll}
\text{ if }f\in{\rm BV}_{\rm H}(\X;\mathscr A),\\
\text{ otherwise.}
\end{array}\right.
\]
It is a direct consequence of the definition that
\begin{equation}\label{eq:incl_BV_H}
{\rm BV}_{\rm H}(\X;\mathscr A)\subseteq{\rm BV}_{\rm H}(\X;\tilde{\mathscr A}),\qquad\mathcal E_{\tilde{\mathscr A}}\leq\mathcal E_{\mathscr A}
\quad\text{ whenever }\mathscr A\subseteq\tilde{\mathscr A}.
\end{equation}
In the case \(\mathscr A=\LIP_\star(\X)\), we use the shorthand notations \({\rm BV}_{\rm H}(\X)\) and \(\|{\bf D}f\|_*\).
To any given function in \({\rm BV}_{\rm H}(\X)\), one can associate also a total variation measure, as follows:
\begin{definition}[Total variation measure \(|{\bf D}f|_*\)]\label{def:BV_H_meas}
Let \((\X,\sfd,\mm)\) be a metric measure space. Let \(f\in{\rm BV}_{\rm H}(\X)\) be given. Then for any open set \(U\subseteq\X\)
we define
\[
|{\bf D}f|_*(U)\coloneqq\inf\bigg\{\liminf_{n\to\infty}\int_U\lip_a(f_n)\,\d\mm\;\bigg|\;\LIP_\star(U)\ni f_n\to f|_U\text{ in }L^1(\mm|_U)\bigg\}\in[0,\|{\bf D}f\|_*].
\]
\end{definition}

Observe that \(|{\bf D}f|_*(\X)=\|{\bf D}f\|_*\). Furthermore, by arguing as in \cite[Lemma 5.2]{Amb:DiMa:14} one can show that the set-function introduced in
Definition \ref{def:BV_H_meas} induces via Carath\'{e}odory's construction a finite Borel measure, still denoted by \(|{\bf D}f|_*\). Videlicet,
the Borel measure \(|{\bf D}f|_*\) is characterised by
\[
|{\bf D}f|_*(E)\coloneqq\inf\big\{|{\bf D}f|_*(U)\;\big|\;U\subseteq\X\text{ open, }E\subseteq U\big\}\quad\text{ for every Borel set }E\subseteq\X.
\]
This is shown in Appendix \ref{app:measure}, explaining why the arguments in the proofs of \cite[Lemmata 5.2 and 5.4]{Amb:DiMa:14} do not make use of the completeness nor separability
of the ambient metric space. Moreover, even though in \cite[Eq.\ (5.1)]{Amb:DiMa:14} the approximating sequence in the definition of \(|{\bf D}f|_*(U)\) is
not required to consist of bounded functions, it still coincides with our definition thanks to a standard truncation argument.
\begin{remark}\label{rmk:Portmanteau}{\rm
Assume \(f\in{\rm BV}_{\rm H}(\X)\) and \((f_n)_n\subseteq\LIP_\star(\X)\) are chosen so that \(f_n\to f\) in \(L^1(\mm)\) and \(\int\lip_a(f_n)\,\d\mm\to\|{\bf D}f\|_*\).
Then it holds that
\[
\lip_a(f_n)\mm\rightharpoonup|{\bf D}f|_*\quad\text{ weakly.}
\]
Indeed, for any \(U\subseteq\X\) open we have \(|{\bf D}f|_*(U)\leq\liminf_n\int_U\lip_a(f_n)\,\d\mm=\liminf_n(\lip_a(f_n)\mm)(U)\).
Taking into account also that \((\lip_a(f_n)\mm)(\X)\to|{\bf D}f|_*(\X)\), we conclude that \(\lip_a(f_n)\mm\rightharpoonup|{\bf D}f|_*\) weakly,
thanks to the Portmanteau theorem.
\fr}\end{remark}
\begin{remark}\label{rmk:|Df|_*_Radon}{\rm
It follows from Remark \ref{rmk:Portmanteau} that
\[
|{\bf D}f|_*\text{ is concentrated on }{\rm spt}(\mm)\text{ for every }f\in{\rm BV}_{\rm H}(\X).
\]
Indeed, as \(\X\setminus{\rm spt}(\mm)\) is open, we have that \(|{\bf D}f|_*(\X\setminus{\rm spt}(\mm))\leq\liminf_n\int_{\X\setminus{\rm spt}(\mm)}\lip_a(f_n)\,\d\mm=0\).
However, even though \({\rm spt}(\mm)\) is separable, it could happen that \(|{\bf D}f|_*\) is not a Radon measure, but under the additional assumption that \((\X,\sfd)\)
is complete it holds that \(|{\bf D}f|_*\) is Radon (since in this case \({\rm spt}(\mm)\) is complete and separable, thus it is a Radon space).
\fr}\end{remark}

It is highly non-trivial to prove that, assuming the ambient metric space is complete, every function in \({\rm BV}_{\rm H}(\X)\) can be approximated
(on the whole \(\X\)) as in Definition \ref{def:bvh} by a sequence of bounded Lipschitz functions. The validity of this claim follows from the results of \cite{DiMaPhD:14}, as we are going to see:
\begin{proposition}\label{prop:approx_with_glob_Lip}
Let \((\X,\sfd,\mm)\) be a metric measure space such that \((\X,\sfd)\) is complete. Then
\[
{\rm BV}_{\rm H}(\X)={\rm BV}_{\rm H}(\X;\LIP_b(\X))
\]
and it holds that \(\|{\bf D}f\|_{*,\LIP_b(\X)}=\|{\bf D}f\|_*\) for every \(f\in{\rm BV}_{\rm H}(\X)\).
\end{proposition}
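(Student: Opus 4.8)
The plan is to exploit the equivalence of the relaxed space $\mathrm{BV}_{\mathrm H}(\X)$ with the BV space defined via test plans or via derivations (from \cite{Amb:DiMa:14,DiMaPhD:14}), and to transport through that equivalence an approximation-by-Lipschitz result available there. Since the inclusion $\mathrm{BV}_{\mathrm H}(\X;\LIP_b(\X))\subseteq\mathrm{BV}_{\mathrm H}(\X)$ together with the inequality $\|{\bf D}f\|_*\le\|{\bf D}f\|_{*,\LIP_b(\X)}$ is immediate from \eqref{eq:incl_BV_H} applied to $\LIP_b(\X)\subseteq\LIP_\star(\X)$, the whole content is the reverse: given $f\in\mathrm{BV}_{\mathrm H}(\X)$ we must produce $(f_n)_n\subseteq\LIP_b(\X)$ with $f_n\to f$ in $L^1(\mm)$ and $\limsup_n\int\lip_a(f_n)\,\d\mm\le\|{\bf D}f\|_*$. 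The key point enabling global (rather than merely local) Lipschitz approximants is completeness of $(\X,\sfd)$: after restricting to $\supp(\mm)$ (on which $|{\bf D}f|_*$ is concentrated by Remark \ref{rmk:|Df|_*_Radon}), one has a complete, separable — hence Radon, even Polish — metric measure space with a finite measure, which is precisely the setting of \cite{Amb:DiMa:14,DiMaPhD:14}.

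The core step I would carry out is as follows. By the main equivalence theorem of \cite{Amb:DiMa:14} (and its refinements in \cite{DiMaPhD:14}), on a complete separable metric measure space with finite measure the relaxed total variation $\|{\bf D}f\|_*$ agrees with the total variation defined through the behaviour of $f$ along test plans, and the latter admits an approximation by \emph{bounded globally Lipschitz} functions that is optimal for the energy. Concretely, in the complete and separable setting one can run the standard regularisation scheme: first truncate $f$ at levels $\pm k$, which does not increase $\|{\bf D}f\|_*$ (by the locality and chain-rule properties of the relaxed total variation, which also hold in the non-complete setting by Appendix \ref{app:measure}); then apply a sup-convolution / Hopf--Lax-type mollification $f^\lambda(x):=\inf_y\big(f(x)+\tfrac1{2\lambda}\sfd(x,y)^2\big)$ (or the McShane-type Lipschitz regularisations used in \cite{DiMaPhD:14}), which for each fixed $\lambda>0$ produces a bounded globally Lipschitz function, converges to $f$ in $L^1(\mm)$ as $\lambda\downarrow 0$, and — this is where completeness enters, to guarantee the infima are attained along minimising sequences staying in a bounded set — satisfies $\limsup_\lambda\int\lip_a(f^\lambda)\,\d\mm\le\|{\bf D}f\|_*$. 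Taking a diagonal sequence over truncation level and regularisation parameter yields the desired $(f_n)_n\subseteq\LIP_b(\X)$, giving $\|{\bf D}f\|_{*,\LIP_b(\X)}\le\|{\bf D}f\|_*$ and hence equality together with $\mathrm{BV}_{\mathrm H}(\X)=\mathrm{BV}_{\mathrm H}(\X;\LIP_b(\X))$.

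I expect the main obstacle to be bookkeeping around the two distinct but equivalent definitions of the metric BV total variation: one must be careful that the energy-optimal approximation by bounded Lipschitz functions extracted from \cite{DiMaPhD:14} is optimal for \emph{the same} functional $\|{\bf D}f\|_*$ defined here by relaxation along $\LIP_\star(\X)$, not merely for the a priori different test-plan or derivation total variation — this is exactly the equivalence proved in \cite{Amb:DiMa:14}, so it must be invoked explicitly. A secondary technical point is the reduction to $\supp(\mm)$: functions in $\LIP_b(\supp(\mm))$ must be extended to bounded globally Lipschitz functions on all of $\X$ without increasing the asymptotic slope where $\mm$ lives, which is a routine McShane extension since $\lip_a$ of the extension is controlled $\mm$-a.e.\ by the global Lipschitz constant and $\mm$ ignores $\X\setminus\supp(\mm)$. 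Neither difficulty is deep, but both require that the completeness hypothesis be used precisely at the point where minimising configurations for the Lipschitz regularisation are shown to be compact.
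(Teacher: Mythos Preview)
Your overall strategy matches the paper's: reduce to the complete separable space $S=\supp(\mm)$, invoke the equivalence results of \cite{Amb:DiMa:14,DiMaPhD:14} there (the paper simply cites \cite[Theorem 4.5.3]{DiMaPhD:14} rather than re-running the Hopf--Lax argument, and routes the inequalities through the weak space ${\rm BV}_w$), and transfer back to $\X$. However, the transfer step contains a genuine gap.

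You assert that extending an approximant $g\in\LIP_b(S)$ to $\tilde g\in\LIP_b(\X)$ is ``a routine McShane extension since $\lip_a$ of the extension is controlled $\mm$-a.e.\ by the global Lipschitz constant''. That bound is far too crude: what you actually need is $\lip_a^\X(\tilde g)(x)\le\lip_a^S(g)(x)$ for $\mm$-a.e.\ $x\in S$, so that $\int_\X\lip_a^\X(\tilde g)\,\d\mm\le\int_S\lip_a^S(g)\,\d\mm$ and the energy of the approximating sequence is preserved. A plain McShane extension does \emph{not} achieve this: at points of $S$ adjacent to $\X\setminus S$ the extension typically creates slope equal to $\Lip(g)$ even where $\lip_a^S(g)$ vanishes (think of $g=0$ on $[0,1]$, $g=1$ on $[2,3]$ inside $\X=\R$). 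The paper closes this gap by invoking \cite{DiMa:Gig:Pra:20}, which supplies precisely a global Lipschitz extension that preserves \emph{local} Lipschitz constants; this is what justifies the identification ${\rm BV}_{\rm H}(\X;\LIP_b(\X))\cong{\rm BV}_{\rm H}(S;\LIP_b(S))$.

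Two minor points: your Hopf--Lax formula should read $\inf_y\big(f(y)+\tfrac1{2\lambda}\sfd(x,y)^2\big)$, not $f(x)$; and your remark that completeness enters ``to guarantee the infima are attained'' misidentifies its role. The Hopf--Lax regularisation is globally Lipschitz regardless of whether minimisers exist; completeness is used in the test-plan machinery behind the identification $\|{\bf D}f\|_*=|{\bf D}f|_w(\X)$, without which (cf.\ Example \ref{ex:no_glob_approx}) the conclusion is simply false.
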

\begin{proof}
Under the additional assumption that \((\X,\sfd)\) is separable, the statement is a direct consequence of \cite[Theorem 4.5.3]{DiMaPhD:14}. Let us now show how
to drop the separability assumption. For brevity, let us denote by \(S\) the support of \(\mm\). Defining the so-called \emph{weak-BV spaces} \({\rm BV}_w\) as
it is done in \cite[Section 4.4.3]{DiMaPhD:14} (or \cite[Section 5.3]{Amb:DiMa:14}), by arguing as in \cite[Section 4.4]{DiMaPhD:14} one can check that
\[
{\rm BV}_{\rm H}(\X;\LIP_b(\X))\subseteq{\rm BV}_{\rm H}(\X)\subseteq{\rm BV}_w(\X),
\]
as well as \(|{\bf D}f|_w(\X)\leq\|{\bf D}f\|_*\leq\|{\bf D}f\|_{*,\LIP_b(\X)}\) for every \(f\in{\rm BV}_{\rm H}(\X;\LIP_b(\X))\), where \(|{\bf D}f|_w\) denotes
the total variation measure corresponding to the weak-BV space \({\rm BV}_w(\X)\). Observe that \({\rm BV}_{\rm H}(\X;\LIP_b(\X))\) can be identified with
\({\rm BV}_{\rm H}(S;\LIP_b(S))\) (thanks to \cite{DiMa:Gig:Pra:20}), and that \({\rm BV}_w(\X)\) can be identified with \({\rm BV}_w(S)\)
(since \(\infty\)-test plans are concentrated on curves that lie in the support of \(\mm\)); in both cases, also the total variations coincide. Since
\({\rm BV}_{\rm H}(S;\LIP_b(S))={\rm BV}_w(S)\) and \(|{\bf D}f|_w(S)=\|{\bf D}f\|_{*,\LIP_b(S)}\) for every \(f\in{\rm BV}_w(S)\) by \cite[Theorem 4.5.3]{DiMaPhD:14}
(together with the fact that \(S\) is complete and separable), we can finally conclude that the statement holds.
\end{proof}

We point out that the completeness assumption in Proposition \ref{prop:approx_with_glob_Lip} cannot be dropped, as it is shown by the following example:
\begin{example}\label{ex:no_glob_approx}{\rm
Let us consider the space \(\X\subseteq\R\), which we define as
\[
\X\coloneqq\bigcup_{n=1}^\infty(n-2^{-n},n+2^{-n})\setminus\{n\}.
\]
We equip \(\X\) with the restriction \(\sfd\) of the Euclidean distance and the restriction \(\mm\) of the one-dimensional Lebesgue measure.
Note that \((\X,\sfd,\mm)\) is a metric measure space with \((\X,\sfd)\) separable but non-complete. Let us now consider the function
\[
f\coloneqq\1_{\bigcup_{n=1}^\infty(n,n+2^{-n})}\in  \LIP_\star(\X).
\]
Since $\lip_a(f)\equiv 0$ we see that \(f\in{\rm BV}_{\rm H}(\X)\) with \(\|{\bf D}f\|_*=0\). However, we claim that \(f\notin{\rm BV}_{\rm H}(\X;\LIP_b(\X))\). To prove it, fix an arbitrary sequence \((f_j)_j\subseteq\LIP_b(\X)\) such that
\(f_j\to f\) in \(L^1(\mm)\). Note that each \(f_j\) can be uniquely extended to a bounded Lipschitz function on \(\bigcup_{n=1}^\infty[n-2^{-n},n+2^{-n}]\),
which we still denote by \(f_j\). Up to a passing to a non-relabelled subsequence, we can assume that \(\liminf_j\int\lip_a(f_j)\,\d\mm\) is in fact
a limit. Up to a further subsequence, we also have that \(f_j\to f\) in the pointwise \(\mm\)-a.e.\ sense. In particular, for any given \(k\in\N\) we can find
\(j_k\in\N\) and \((t^k_\ell)_{\ell=1}^k,(s^k_\ell)_{\ell=1}^k\subseteq\X\), with \(\ell-2^{-\ell}<t^k_\ell<\ell<s^k_\ell<\ell+2^{-\ell}\) for all \(\ell=1,\ldots,k\),
such that \(f_j(t^k_\ell)\leq\frac{1}{3}\) and \(f_j(s^k_\ell)\geq\frac{2}{3}\) for every \(\ell=1,\ldots,k\) and \(j\geq j_k\). Then
\[
\frac{1}{3}\leq f_j(s^k_\ell)-f_j(t^k_\ell)=\int_{t^k_\ell}^{s^k_\ell}f'_j(r)\,\d r\leq\int_{\ell-2^{-\ell}}^{\ell+2^{-\ell}}\lip_a(f_j)(r)\,\d r
\quad\text{ for all }\ell=1,\ldots,k\text{ and }j\geq j_k.
\]
Summing over \(\ell=1,\ldots,k\) and letting \(j\to\infty\), we deduce that
\[
\lim_{j\to\infty}\int\lip_a(f_j)\,\d\mm\geq\limsup_{j\to\infty}\sum_{\ell=1}^k\int_{\ell-2^{-\ell}}^{\ell+2^{-\ell}}\lip_a(f_j)(r)\,\d r\geq\frac{k}{3}.
\]
Due to the arbitrariness of \(k\in\N\), we conclude that \(\lim_j\int\lip_a(f_j)\,\d\mm=+\infty\) and thus accordingly \(f\notin{\rm BV}_{\rm H}(\X;\LIP_b(\X))\).
}
\fr\end{example}
\subsection{The space \texorpdfstring{\({\rm H}^{1,1}\)}{H11}}
A metric Sobolev space defined in terms of approximating Lipschitz functions was introduced in \cite{Ch:99}
and revisited in \cite{Amb:Gig:Sav:13}. Here, we consider its following variant:
\begin{definition}[The space \({\rm H}^{1,1}\)]\label{def:H11}
Let \((\X,\sfd,\mm)\) be a metric measure space. Let \(\mathscr A\) be a unital separating subalgebra of
\(\LIP_\star(\X)\). Then we define \({\rm H}^{1,1}(\X;\mathscr A)\) as the space of all \(f\in L^1(\mm)\)
for which there exist a sequence \((f_n)_n\subseteq\mathscr A\) and a function \(G\in L^1(\mm)^+\) such that
\(f_n\to f\) strongly in \(L^1(\mm)\) and \(\lip_a(f_n)\rightharpoonup G\) weakly in \(L^1(\mm)\).
\end{definition}

Given any \(f\in{\rm H}^{1,1}(\X;\mathscr A)\), we say that \(\tilde G\in L^1(\mm)^+\) is a \textbf{relaxed \(\mathscr A\)-slope} of \(f\)
if \(\tilde G\geq G\) for some \(G\in L^1(\mm)^+\) as in Definition \ref{def:H11}. The set of all relaxed \(\mathscr A\)-slopes
of \(f\) is a closed convex sublattice of \(L^1(\mm)\), thus it admits a unique minimal element \(|{\rm D}f|_{*,\mathscr A}\),
which we call the \textbf{minimal relaxed \(\mathscr A\)-slope}
of \(f\).
The space \({\rm H}^{1,1}(\X;\mathscr A)\) is a Banach space if endowed with the norm
\[
\|f\|_{{\rm H}^{1,1}(\X;\mathscr A)}\coloneqq\|f\|_{L^1(\mm)}+\||{\rm D}f|_{*,\mathscr A}\|_{L^1(\mm)}\quad\text{ for every }f\in{\rm H}^{1,1}(\X;\mathscr A).
\]
Observe that \(\mathscr A\subseteq{\rm H}^{1,1}(\X;\mathscr A)\) and
\begin{equation}\label{eq:LIP_in_H11}
|{\rm D}f|_{*,\mathscr A}\leq\lip_a(f)\quad\text{ for every }f\in\mathscr A.
\end{equation}
Minimal relaxed \(\mathscr A\)-slopes satisfy also the following locality property: for any \(f,g\in{\rm H}^{1,1}(\X;\mathscr A)\),
\begin{equation}\label{eq:locality_rel_slope}
|{\rm D}f|_{*,\mathscr A}=|{\rm D}g|_{*,\mathscr A}\quad\text{ holds }\mm\text{-a.e.\ on }\{f=g\}.
\end{equation}
The interested reader can find a proof of the existence of the minimal relaxed $\mathscr A$-slope and of the completeness of $H^{1,p}(\X; \mathscr{A})$ in \cite[Section 5.2]{Amb:Iko:Luc:Pas:24} for $\mathscr A$ being the algebra of Lipschitz functions with bounded support, or in \cite[Section 3.1]{Sav:22} in case $p>1$. The locality property of the relaxed $\mathscr A$-slope can be proven exactly as in \cite[Theorem 3.1.12(b)]{Sav:22}, where it is proven for $p>1$.

In the case where \(\mathscr A=\LIP_\star(\X)\), we use the shorthand notations \({\rm H}^{1,1}(\X)\) and \(|{\rm D}f|_*\). 
\begin{remark}\label{rmk:BV_vs_H11}{\rm
It holds that \({\rm H}^{1,1}(\X;\mathscr A)\subseteq{\rm BV}_{\rm H}(\X;\mathscr A)\) and
\[
\|{\bf D}f\|_{*,\mathscr A}\leq  \int |{\rm D}f|_{*,\mathscr A}\,\d\mm\quad
\text{ for every }f\in{\rm H}^{1,1}(\X;\mathscr A).
\]
The proof follows immediately from the definitions of the spaces and the weak lower semicontinuity of the $L^1(\mm)$-norm.
\fr}\end{remark}

In accordance with \cite[Definition 2.2.1]{Gig:18}, the metric Sobolev space \({\rm H}^{1,1}(\X;\mathscr A)\) induces a space
of `integrable \(1\)-forms', which is an \(L^1(\mm)\)-Banach \(L^\infty(\mm)\)-module called the \emph{cotangent module}, and
a \emph{differential} map that underlies the minimal relaxed slopes. Videlicet:
\begin{theorem}[Cotangent module]
Let \((\X,\sfd,\mm)\) be a metric measure space. Let \(\mathscr A\) be a unital separating subalgebra of \(\LIP_\star(\X)\).
Then there exist an \(L^1(\mm)\)-Banach \(L^\infty(\mm)\)-module \(L^1(T^*\X;\mathscr A)\), which we call the
\textbf{\(\mathscr A\)-cotangent module}, and a linear operator \(\d_{\mathscr A}\colon\mathscr A\to L^1(T^*\X;\mathscr A)\) such that:
\begin{itemize}
\item[\(\rm i)\)] \(|\d_{\mathscr A}f|=|{\rm D}f|_{*,\mathscr A}\) for every \(f\in\mathscr A\).
\item[\(\rm ii)\)] \(\{\d_{\mathscr A}f:f\in\mathscr A\}\) generates \(L^1(T^*\X;\mathscr A)\).
\end{itemize}
The pair \((L^1(T^*\X;\mathscr A),\d_{\mathscr A})\) is unique up to a unique isomorphism:
given any pair \((\mathscr M,\d)\) with the same properties, there exists a unique isomorphism
of \(L^1(\mm)\)-Banach \(L^\infty(\mm)\)-modules \(\Phi\colon L^1(T^*\X;\mathscr A)\to\mathscr M\)
such that \(\d=\Phi\circ\d_{\mathscr A}\). Moreover, \(\d_{\mathscr A}\) satisfies the \textbf{Leibniz rule}, i.e.
\begin{equation}\label{eq:Leibniz_d_A}
\d_{\mathscr A}(fg)=f\,\d_{\mathscr A}g+g\,\d_{\mathscr A}f\quad\text{ for every }f,g\in\mathscr A.
\end{equation}
\end{theorem}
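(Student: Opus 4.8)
The construction is the standard one for cotangent modules of Gigli type, carried out verbatim for the algebra $\mathscr A$ in place of $\LIP_b(\X)$: build a "pre-cotangent" space of formal finite sums $\sum_i \1_{E_i}\d_{\mathscr A}f_i$ indexed by Borel partitions, introduce the natural pointwise seminorm on it, quotient and complete. So first I would consider the set
\[
\mathcal{P}\coloneqq\bigg\{\big\{(E_i,f_i)\big\}_{i=1}^n\;\bigg|\;n\in\N,\ \{E_i\}_{i=1}^n\text{ Borel partition of }\X,\ f_i\in\mathscr A\bigg\},
\]
equipped with the equivalence relation identifying $\{(E_i,f_i)\}$ with its common refinements, and the operations of sum and multiplication by $L^\infty(\mm)$-simple functions defined on representatives over a common partition. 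On the resulting vector space I would put the pointwise norm $\big|\sum_i\1_{E_i}\d_{\mathscr A}f_i\big|\coloneqq\sum_i\1_{E_i}|{\rm D}f_i|_{*,\mathscr A}\in L^1(\mm)^+$; the key point needed for this to be well defined is precisely the locality property \eqref{eq:locality_rel_slope}, which guarantees that on the overlap $E_i\cap\tilde E_j$ of two partitions one has $|{\rm D}f_i|_{*,\mathscr A}=|{\rm D}\tilde f_j|_{*,\mathscr A}$ whenever the two formal sums are declared equivalent, and more generally that $|{\rm D}(f-g)|_{*,\mathscr A}=0$ on $\{f=g\}$ forces the seminorm to vanish consistently. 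Then I would take the quotient by the seminorm-zero subspace and complete with respect to $\|\cdot\|\coloneqq\|\,|\cdot|\,\|_{L^1(\mm)}$, calling the result $L^1(T^*\X;\mathscr A)$, and define $\d_{\mathscr A}f$ as the class of $\{(\X,f)\}$. Item i) holds by construction; the glueing property (module axiom ii) is a routine consequence of completeness applied to the countable-partition version; item ii) holds because the simple-function-linear combinations of the $\d_{\mathscr A}f$ are dense by the very definition of the completion, and one upgrades from simple to general $L^\infty(\mm)$-coefficients by approximation.

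For the uniqueness statement I would argue as usual: given $(\mathscr M,\d)$ with the same properties, define $\Phi$ on the dense subspace of simple combinations by $\Phi\big(\sum_i\1_{E_i}\d_{\mathscr A}f_i\big)\coloneqq\sum_i\1_{E_i}\d f_i$; this is well defined and norm-preserving since $\big|\sum_i\1_{E_i}\d f_i\big|=\sum_i\1_{E_i}|\d f_i|=\sum_i\1_{E_i}|{\rm D}f_i|_{*,\mathscr A}$ — here one uses that any $(\mathscr M,\d)$ satisfying i) automatically satisfies the same locality, because $|\d f|=|{\rm D}f|_{*,\mathscr A}$ and the pointwise-norm axioms of $\mathscr M$ force $\d$ to be local on level sets. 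Then $\Phi$ extends uniquely to an isometric $L^\infty(\mm)$-linear map on the completion, it is surjective because $\{\d f\}$ generates $\mathscr M$, and $\d=\Phi\circ\d_{\mathscr A}$ by construction; uniqueness of $\Phi$ is forced by this last identity together with $L^\infty(\mm)$-linearity and density. Finally, the Leibniz rule \eqref{eq:Leibniz_d_A} is inherited from the corresponding Leibniz-type property of minimal relaxed slopes: for $f,g\in\mathscr A$ one has the pointwise bound $|{\rm D}(fg)-f\,{\rm D}g-g\,{\rm D}f|$-type estimate (equivalently, $\d_{\mathscr A}(fg)$ and $f\,\d_{\mathscr A}g+g\,\d_{\mathscr A}f$ have the same image under the isometric embedding into any module where the differential is known to be Leibniz, e.g.\ obtained from the chain/Leibniz rule for $\lip_a$ and a diagonal argument), so the two elements of $L^1(T^*\X;\mathscr A)$ coincide.

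**Main obstacle.** The only genuinely delicate point is the well-definedness of the pointwise norm and the verification that $\d_{\mathscr A}$ is local, i.e.\ that the whole scheme rests on \eqref{eq:locality_rel_slope}; everything else is soft functional analysis. One should also be slightly careful that, unlike the case $\mathscr A=\LIP_b(\X)$ treated in \cite{Gig:18}, here $\mathscr A$ need not be a lattice, so the standard references do not apply literally — but the construction above never uses stability under truncation or under $\min/\max$, only that $\mathscr A$ is a unital separating subalgebra and that minimal relaxed $\mathscr A$-slopes exist, are local, and satisfy the Leibniz inequality, all of which are recorded above. Since $\mm$ is finite (hence $\sigma$-finite), $L^1(\mm)$ is Dedekind complete and the existence of the minimal element $|{\rm D}f|_{*,\mathscr A}$ is available, so no additional hypotheses are needed. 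Thus the proof is essentially a transcription of \cite[Theorem 2.2.1]{Gig:18} with $\LIP_b(\X)$ replaced by $\mathscr A$ and "minimal relaxed slope" replaced by "minimal relaxed $\mathscr A$-slope", and I would present it at that level of detail, pointing to \cite{Gig:18} for the routine parts.
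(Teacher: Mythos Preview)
Your proposal is correct and follows essentially the same route as the paper: the paper's proof is little more than a citation of Gigli's cotangent-module construction (specifically \cite[Theorem 3.19]{Luc:Pas:23} for existence/uniqueness and \cite[Theorem 2.2.6]{Gig:18} for the Leibniz rule), and what you have written is a faithful unpacking of that construction with $\mathscr A$ in place of the full Lipschitz algebra, correctly identifying locality \eqref{eq:locality_rel_slope} as the crucial input. One small point: your justification of the Leibniz rule is vague---the parenthetical about ``isometric embedding into any module where the differential is known to be Leibniz'' is circular as stated; the actual argument (as in \cite[Theorem 2.2.6]{Gig:18}) is to show directly that $|\d_{\mathscr A}(fg)-f\,\d_{\mathscr A}g-g\,\d_{\mathscr A}f|=0$ $\mm$-a.e.\ by combining locality with the Leibniz inequality $|{\rm D}(fg)|_{*,\mathscr A}\le|f||{\rm D}g|_{*,\mathscr A}+|g||{\rm D}f|_{*,\mathscr A}$ and the analogous inequality for $fg$ rewritten via the chain rule on level sets.
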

\begin{proof}
This kind of construction is due to Gigli \cite{Gig:18}; see also \cite{Gig:17} or \cite{Gig:Pas:19}.
Technically speaking, the stated existence and uniqueness are direct consequences of \cite[Theorem 3.19]{Luc:Pas:23}.
The Leibniz rule \eqref{eq:Leibniz_d_A} follows from the locality \eqref{eq:locality_rel_slope} of \(|{\rm D}f|_{*,\mathscr A}\),
arguing as in \cite[Theorem 2.2.6]{Gig:18}.
\end{proof}
In analogy with \cite[Definition 2.3.1]{Gig:18}, we then define the \textbf{Sobolev \(\mathscr A\)-tangent module} as
\[
L^\infty(T\X;\mathscr A)\coloneqq L^1(T^*\X;\mathscr A)^*.
\]
Recall that \(L^\infty(T\X;\mathscr A)\) is an \(L^\infty(\mm)\)-Banach \(L^\infty(\mm)\)-module. Recall also that the operator
\[
\textsc{Int}_{\mathscr A}\coloneqq\textsc{Int}_{L^1(T^*\X;\mathscr A)}\colon L^\infty(T\X;\mathscr A)\to L^1(T^*\X;\mathscr A)'
\]
is an isometric isomorphism of Banach spaces, see \eqref{eq:Int}. To be consistent with \cite{Gig:18}, we will denote by
\(\omega(v)\in L^1(\mm)\) (instead of \(v(\omega)\in L^1(\mm)\)) the `pointwise duality pairing' between two elements \(\omega\in L^1(T^*\X;\mathscr A)\)
and \(v\in L^\infty(T\X;\mathscr A)\), even though \(L^\infty(T\X;\mathscr A)\) is the module dual of \(L^1(T^*\X;\mathscr A)\)
(and, in general, not the other way round).
\subsection{The space \texorpdfstring{\({\rm BV}_{\rm W}\)}{BVW}}
A notion of metric BV space in terms of an integration-by-parts formula involving derivations was introduced in \cite[Definition 3.1]{DiMar:14};
see also \cite{DiMaPhD:14}. We generalise it as follows:
\begin{definition}[BV space via derivations]\label{def:bvw}
Let \((\X,\sfd,\mm)\) be a metric measure space. Let \(\mathscr A\) be a unital separating subalgebra of \(\LIP_\star(\X)\).
Then we define \({\rm BV}_{\rm W}(\X;\mathscr A)\) as the space of all functions \(f\in L^1(\mm)\) for which there exists a
bounded linear operator \({\bf L}_f\colon\Der^\infty_\infty(\X;\mathscr A)\to\mathcal M(\X)\) such that
\begin{subequations}\begin{align}
\label{eq:def_BV_der_1}
{\bf L}_f(b)(\X)=-\int f\,\div(b)\,\d\mm&\quad\text{ for every }b\in{\rm Der}^\infty_\infty(\X;\mathscr A),\\
\label{eq:def_BV_der_2}
{\bf L}_f(hb)=h\,{\bf L}_f(b)&\quad\text{ for every }h\in\mathscr A\text{ and }b\in{\rm Der}^\infty_\infty(\X;\mathscr A),\\
\label{eq:def_BV_der_3}
|{\bf L}_f(b)|(\X\setminus{\rm spt}(\mm))=0&\quad\text{ for every }b\in\Der^\infty_\infty(\X;\mathscr A).
\end{align}\end{subequations}
\end{definition}
\begin{remark}\label{rmk:L_f_unique}{\rm
Given any function \(f\in{\rm BV}_{\rm W}(\X;\mathscr A)\), the operator \({\bf L}_f\) is uniquely determined. Indeed, if another
operator \(\tilde{\bf L}_f\) satisfies the same properties, then \eqref{eq:def_BV_der_2} and \eqref{eq:def_BV_der_1} imply that
\[
\int h\,\d({\bf L}_f(b)-\tilde{\bf L}_f(b))={\bf L}_f(hb)(\X)-\tilde{\bf L}_f(hb)(\X)=-\int f\,\div(hb)\,\d\mm+\int f\,\div(hb)\,\d\mm=0
\]
for every \(h\in\mathscr A\) and \(b\in{\rm Der}^\infty_\infty(\X;\mathscr A)\), whence it follows that \({\bf L}_f=\tilde{\bf L}_f\)
thanks to \eqref{eq:Stone-Weiestrass}.
\fr}\end{remark}

By Remark \ref{rmk:L_f_unique}, it makes sense to define the \textbf{total \(\mathscr A\)-variation} of any \(f\in{\rm BV}_{\rm W}(\X;\mathscr A)\) as
\[
{\rm V}_{\mathscr A}(f)\coloneqq\|{\bf L}_f\|_{\mathcal L(\overline\Der^\infty_\infty(\X;\mathscr A);\mathcal M(\X))},
\]
where we keep the notation \({\bf L}_f\) to indicate the unique bounded linear operator from \(\overline\Der^\infty_\infty(\X;\mathscr A)\)
to \(\mathcal M(\X)\) that extends \({\bf L}_f\).
One can readily check that \({\rm BV}_{\rm W}(\X;\mathscr A)\) is a Banach space if endowed with the norm
\[
\|f\|_{{\rm BV}_{\rm W}(\X;\mathscr A)}\coloneqq\|f\|_{L^1(\mm)}+{\rm V}_{\mathscr A}(f)\quad\text{ for every }f\in{\rm BV}_{\rm W}(\X;\mathscr A).
\]
\begin{proposition}\label{prop:equiv_|Df|}
Let \((\X,\sfd,\mm)\) be a metric measure space. Let \(\mathscr A\) be a unital separating subalgebra of \(\LIP_\star(\X)\).
For any \(f\in{\rm BV}_{\rm W}(\X;\mathscr A)\), we define its \textbf{total \(\mathscr A\)-variation measure} as
\[
|{\bf D}f|_{\mathscr A}\coloneqq\bigvee\big\{|{\bf L}_f(b)|\;\big|\;b\in{\rm Der}^\infty_\infty(\X;\mathscr A),\,|b|_{\mathscr A}\leq 1\big\}.
\]
Then \(|{\bf D}f|_{\mathscr A}\) is a finite non-negative Borel measure on \(\X\) such that
\(|{\bf D}f|_{\mathscr A}(\X)={\rm V}_{\mathscr A}(f)\).
\end{proposition}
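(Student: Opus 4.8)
We need to verify that $|{\bf D}f|_{\mathscr A}$ is a finite non-negative Borel measure and that $|{\bf D}f|_{\mathscr A}(\X)={\rm V}_{\mathscr A}(f)$. Since every $|{\bf L}_f(b)|$ is a finite non-negative Radon measure and the $\bigvee$ of a family of non-negative measures is, by construction, a non-negative Borel measure, the only genuine points are the identity of the total masses and -- as a consequence of it -- the finiteness of $|{\bf D}f|_{\mathscr A}$. Throughout, let $\mathcal B$ denote the set of all $b\in\Der^\infty_\infty(\X;\mathscr A)$ with $|b|_{\mathscr A}\leq 1$ $\mm$-a.e., so that $|{\bf D}f|_{\mathscr A}=\bigvee\{|{\bf L}_f(b)|:b\in\mathcal B\}$; since $\Der^\infty_\infty(\X;\mathscr A)$ is dense in $\overline{\Der}^\infty_\infty(\X;\mathscr A)$ and ${\bf L}_f$ is a bounded operator, the definition of ${\rm V}_{\mathscr A}(f)$ as an operator norm gives ${\rm V}_{\mathscr A}(f)=\sup\{|{\bf L}_f(b)|(\X):b\in\mathcal B\}$. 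The inequality ${\rm V}_{\mathscr A}(f)\leq|{\bf D}f|_{\mathscr A}(\X)$ is then immediate: for each $b\in\mathcal B$ the measure $|{\bf L}_f(b)|$ is one of those appearing in the supremum defining $|{\bf D}f|_{\mathscr A}$, hence $|{\bf L}_f(b)|\leq|{\bf D}f|_{\mathscr A}$ and in particular $|{\bf L}_f(b)|(\X)\leq|{\bf D}f|_{\mathscr A}(\X)$; it remains to take the supremum over $b\in\mathcal B$.

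The core of the proof is the reverse inequality $|{\bf D}f|_{\mathscr A}(\X)\leq{\rm V}_{\mathscr A}(f)$. First I would reduce to finite partitions: given a countable Borel partition $(E_n)_n$ of $\X$ and $(b_n)_n\subseteq\mathcal B$, each partial sum $\sum_{n=1}^N|{\bf L}_f(b_n)|(E_n)$ equals the analogous quantity for the finite partition $E_1,\dots,E_N,\bigcup_{m>N}E_m$ (assigning the zero derivation to the last cell), so by letting $N\to\infty$ it suffices to prove that $\sum_{n=1}^N|{\bf L}_f(b_n)|(E_n)\leq{\rm V}_{\mathscr A}(f)$ for every finite Borel partition $E_1,\dots,E_N$ of $\X$ and every $b_1,\dots,b_N\in\mathcal B$. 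The idea is to glue the $b_n$'s into a single element of $\mathcal B$ by means of a partition of unity adapted to the measures involved. So, fixing $\delta>0$ and setting $\nu\coloneqq\mm+\sum_{n=1}^N|{\bf L}_f(b_n)|$ -- a finite Radon measure -- I would invoke Lemma \ref{lem:part_of_unity} with $\nu$ in place of $\mm$: although that lemma is stated for $\mm$, its proof goes through verbatim for any finite Radon measure on $\X$, since it only uses the density of $\mathscr A$ in $L^1$ of the reference measure and the truncation statement \cite[Corollary 2.1.24]{Sav:22}, both of which hold in that generality by the Stone--Weierstrass theorem (cf.\ \cite[Lemma 2.1.27]{Sav:22}). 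This produces $\eta_1,\dots,\eta_N\in\mathscr A$ with $0\leq\eta_n\leq 1$, $\sum_{n=1}^N\eta_n\leq 1$ on $\X$, and $\|\eta_n-\1_{E_n}\|_{L^1(\nu)}\leq\delta/N$ for every $n$.

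Now set $b\coloneqq\sum_{n=1}^N\eta_n b_n$. This lies in $\Der^\infty_\infty(\X;\mathscr A)$, which is a module over $\mathscr A$, and satisfies $|b|_{\mathscr A}\leq\sum_{n=1}^N\eta_n|b_n|_{\mathscr A}\leq\sum_{n=1}^N\eta_n\leq 1$ $\mm$-a.e., hence $b\in\mathcal B$ and $|{\bf L}_f(b)|(\X)\leq{\rm V}_{\mathscr A}(f)$; moreover ${\bf L}_f(b)=\sum_{n=1}^N\eta_n{\bf L}_f(b_n)$ by linearity and \eqref{eq:def_BV_der_2}. To extract $\sum_n|{\bf L}_f(b_n)|(E_n)$, I would test this measure against $\phi\coloneqq\sum_{m=1}^N\sigma_m\1_{E_m}$, where $\sigma_m\colon\X\to\{-1,1\}$ is a Borel function realising the polar decomposition ${\bf L}_f(b_m)=\sigma_m\,|{\bf L}_f(b_m)|$ (extended so as to be $\pm 1$-valued on all of $\X$); since the $E_m$ are pairwise disjoint, $|\phi|\equiv 1$, and therefore
\[
{\rm V}_{\mathscr A}(f)\,\geq\,|{\bf L}_f(b)|(\X)\,\geq\,\int\phi\,\d{\bf L}_f(b)\,=\,\sum_{n=1}^N\int\phi\,\eta_n\,\d{\bf L}_f(b_n)\,=\,\sum_{n=1}^N|{\bf L}_f(b_n)|(E_n)+\sum_{n=1}^N\int\phi\,(\eta_n-\1_{E_n})\,\d{\bf L}_f(b_n),
\]
where the last equality uses that $\phi$ coincides with $\sigma_n$ on $E_n$ and that $\int_{E_n}\sigma_n\,\d{\bf L}_f(b_n)=|{\bf L}_f(b_n)|(E_n)$. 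As the final sum is bounded in absolute value by $\sum_{n=1}^N\|\eta_n-\1_{E_n}\|_{L^1(\nu)}\leq\delta$, we obtain $\sum_{n=1}^N|{\bf L}_f(b_n)|(E_n)\leq{\rm V}_{\mathscr A}(f)+\delta$; letting $\delta\downarrow 0$ and taking the supremum over all finite partitions and all families in $\mathcal B$ gives $|{\bf D}f|_{\mathscr A}(\X)\leq{\rm V}_{\mathscr A}(f)$, and in particular $|{\bf D}f|_{\mathscr A}$ is finite.

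The step I expect to be the main obstacle is the control of the error term $\sum_n\int\phi\,(\eta_n-\1_{E_n})\,\d{\bf L}_f(b_n)$: the measures $|{\bf L}_f(b_n)|$ are typically singular with respect to $\mm$ (for instance, when $f$ is the indicator function of a set of finite perimeter in $\R^n$ the relevant measure is the perimeter measure, which is concentrated on a $\mm$-negligible set), so an $L^1(\mm)$-approximation of the $\1_{E_n}$ would be of no use here. The remedy -- choosing the auxiliary measure $\nu$ so that it charges precisely the measures $|{\bf L}_f(b_n)|$ that appear, which is admissible because the partition-of-unity construction does not depend on $\mm$ in any essential way -- is the only non-routine ingredient; everything else is a formal manipulation of the defining properties of ${\bf L}_f$ and of the module $\Der^\infty(\X;\mathscr A)$.
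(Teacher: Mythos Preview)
Your proof is correct and follows essentially the same approach as the paper: both glue the derivations $b_1,\dots,b_N$ into a single $b=\sum_n\eta_n b_n\in\mathcal B$ via Lemma~\ref{lem:part_of_unity} applied with the auxiliary measure $\sum_n|{\bf L}_f(b_n)|$ (rather than $\mm$), which is exactly the key observation you flag at the end. The only cosmetic difference is in the handling of signs: the paper first invokes Remark~\ref{rem:reduction} to reduce to partitions with $E_i$ contained in the positive support of ${\bf L}_f(b_i)$, so that testing ${\bf L}_f(b)$ against $\1_\X$ already gives $\sum_i{\bf L}_f(b_i)(E_i)$ up to the error $\delta$; you instead keep the general case and test against the sign function $\phi=\sum_m\sigma_m\1_{E_m}$ coming from the polar decomposition, which amounts to the same thing.
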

\begin{remark}\label{rem:reduction}{\rm
Since \({\bf L}_f(0)\) is the null measure and \({\bf L}_f(-b)=-{\bf L}_f(b)\) for every \(b\in{\rm Der}^\infty_\infty(\X;\mathscr A)\),
the definition of $|{\bf D}f|_{\mathscr A}$ can be equivalently rewritten as
\[
|{\bf D}f|_{\mathscr A}(E) = \sup \sum_{i=1}^n {\bf L}_f(b_i)(E_i)\quad\text{ for every Borel set }E\subseteq\X,
\]
where the supremum is taken among all finite Borel partitions $(E_i)_{i=1}^n$ of $E$ and all collections $(b_i)_{i=1}^n \subseteq {\rm Der}^\infty_\infty(\X;\mathscr A)$
such that $|b_i|_{\mathscr A}\leq 1$ and $E_i$ is contained in the support of the positive part of ${\bf L}_f(b_i)$. This is because, given $b \in {\rm Der}^\infty_\infty(\X;\mathscr A)$
and a Borel subset $F \subseteq\X$, we have that
\[
|{\bf L}_f(b)|(F) = {\bf L}_f(b)(F \cap P_{b}) + {\bf L}_f(-b)(F \setminus P_{b}),
\]
where $P_{b}$ is the support of the positive part of ${\bf L}_f(b)$. Thus one can always replace the partition $(E_i)_{i=1}^n$ with
$(E_i \cap P_{b_i})_{i=1}^n \cup (E_i \setminus P_{b_i})_{i=1}^n$, and the collection $(b_i)_{i=1}^n$ with $(b_i)_{i=1}^n \cup (-b_i)_{i=1}^n$.
\fr}\end{remark}
\begin{proof}[Proof of Proposition \ref{prop:equiv_|Df|}]
Let \(\lambda<|{\bf D}f|_{\mathscr A}(\X)\) be fixed. Taking into account Remark \ref{rem:reduction},
we can find a Borel partition \(E_1,\ldots,E_n\) of \(\X\) and \(b_1,\ldots,b_n\in{\rm Der}^\infty_\infty(\X;\mathscr A)\) such that
\(|b_i|_{\mathscr A}\leq 1\) for all \(i=1,\ldots,n\) and \(\sum_{i=1}^n{\bf L}_f(b_i)(E_i)>\lambda\). Letting \(\mu\coloneqq\sum_{i=1}^n|{\bf L}_f(b_i)|\) and
\(\delta\coloneqq\sum_{i=1}^n{\bf L}_f(b_i)(E_i)-\lambda\), we know from Lemma \ref{lem:part_of_unity} that there exist non-negative functions
\(\eta_1,\ldots,\eta_n\in\mathscr A\) with \(\sum_{i=1}^n\eta_i\leq 1\) such that \(\|\eta_i-\1_{E_i}\|_{L^1(\mu)}\leq\delta/n\) for every \(i=1,\ldots,n\).
Letting \(b\coloneqq\sum_{i=1}^n\eta_i b_i\in{\rm Der}^\infty_\infty(\X;\mathscr A)\), we have that \(|b|_{\mathscr A}\leq\sum_{i=1}^n\eta_i|b_i|_{\mathscr A}\leq 1\) and
\[
\bigg|\sum_{i=1}^n{\bf L}_f(b_i)(E_i)-{\bf L}_f(b)(\X)\bigg|\leq\sum_{i=1}^n\int|\1_{E_i}-\eta_i|\,\d|{\bf L}_f(b_i)|\leq\delta,
\]
whence it follows that
\[
\lambda=\sum_{i=1}^n{\bf L}_f(b_i)(E_i)-\delta\leq{\bf L}_f(b)(\X)\leq|{\bf L}_f(b)|(\X)\leq{\rm V}_{\mathscr A}(f).
\]
Letting \(\lambda\nearrow|{\bf D}f|_{\mathscr A}(\X)\), we deduce that \(|{\bf D}f|_{\mathscr A}(\X)\leq{\rm V}_{\mathscr A}(f)\), thus
\(|{\bf D}f|_{\mathscr A}\) is finite. Finally, we have \(|{\bf L}_f(b)|(\X)\leq|{\bf D}f|_{\mathscr A}(\X)\) for all
\(b\in{\rm Der}^\infty_\infty(\X;\mathscr A)\) with \(|b|_{\mathscr A}\leq 1\), thus \({\rm V}_{\mathscr A}(f)\leq|{\bf D}f|_{\mathscr A}(\X)\).
\end{proof}
\begin{remark}\label{rmk:|Df|_Radon}{\rm
Assuming in addition that \((\X,\sfd)\) is complete, it holds that
\[
|{\bf D}f|_{\mathscr A}\text{ is a Radon measure for every }f\in{\rm BV}_{\rm W}(\X;\mathscr A).
\]
Indeed, \(|{\bf D}f|_{\mathscr A}\) is concentrated on \({\rm spt}(\mm)\) as a consequence of \eqref{eq:def_BV_der_3},
thus it is a Radon measure because \({\rm spt}(\mm)\) is complete and separable.
\fr}\end{remark}

When \(\mathscr A=\LIP_\star(\X)\), we just write \({\rm BV}_{\rm W}(\X)\) and \(|{\bf D}f|\) for brevity.
\begin{lemma}\label{lem:BV_H_loc_in_BV_W}
Let \((\X,\sfd,\mm)\) be a metric measure space. Assume that \((\X,\sfd)\) is either complete or a Radon space. Then it holds that
\({\rm BV}_{\rm H}(\X)\subseteq{\rm BV}_{\rm W}(\X)\) and
\[
|{\bf D}f|\leq|{\bf D}f|_*\quad\text{ for every }f\in{\rm BV}_{\rm H}(\X).
\]
\end{lemma}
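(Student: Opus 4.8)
The strategy is to show that any function $f\in{\rm BV}_{\rm H}(\X)$ can be tested against Lipschitz derivations with bounded divergence, with the relevant bounds controlled by the measure $|{\bf D}f|_*$. Fix $f\in{\rm BV}_{\rm H}(\X)$ and choose a sequence $(f_n)_n\subseteq\LIP_\star(\X)$ with $f_n\to f$ in $L^1(\mm)$ and $\int\lip_a(f_n)\,\d\mm\to\|{\bf D}f\|_*$; by Remark \ref{rmk:Portmanteau} we then have the weak convergence of measures $\lip_a(f_n)\mm\rightharpoonup|{\bf D}f|_*$. The natural candidate for the operator ${\bf L}_f$ is obtained as a weak-$*$ limit: for $b\in\Der^\infty_\infty(\X)$, the measures $b(f_n)\,\mm$ satisfy $|b(f_n)\,\mm|\leq|b|\,\lip_a(f_n)\,\mm$, which is a tight (since $\mm$ and hence $|{\bf D}f|_*$ are Radon, and in the non-complete Radon case one argues directly with inner regularity) bounded family in $\mathcal M(\X)$; after passing to a subsequence we set ${\bf L}_f(b)\coloneqq\text{w-}\lim_k b(f_{n_k})\,\mm$. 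The $L^\infty$-linearity \eqref{eq:def_BV_der_2} and the pointwise bound $|{\bf L}_f(b)|\leq|b|\cdot|{\bf D}f|_*$ then pass to the limit, the latter because for every open $U$, $|{\bf L}_f(b)|(U)\leq\liminf_k\int_U|b|\,\lip_a(f_{n_k})\,\d\mm$; this simultaneously gives \eqref{eq:def_BV_der_3} and the desired estimate $|{\bf D}f|\leq|{\bf D}f|_*$.

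The integration-by-parts identity \eqref{eq:def_BV_der_1} is where the argument has real content. For $b\in\Der^\infty_\infty(\X)$ and each $n$, since $f_n\in\LIP_\star(\X)$ one wants $\int b(f_n)\,\d\mm=-\int f_n\,\div(b)\,\d\mm$; this is exactly the defining property of $\div(b)$ applied to the test function $f_n\in\LIP_\star(\X)$ — recall that $\Der^\infty_\infty(\X)$ is the derivations with divergence in duality with $\mathscr A=\LIP_\star(\X)$, so this is immediate. Letting $n\to\infty$, the right-hand side converges to $-\int f\,\div(b)\,\d\mm$ because $f_n\to f$ in $L^1(\mm)$ and $\div(b)\in L^\infty(\mm)$, while the left-hand side is ${\bf L}_f(b)(\X)=\lim_k(b(f_{n_k})\,\mm)(\X)$, where we use that the full mass converges along the subsequence — this needs the tightness/Radon property so that no mass escapes, i.e.\ $(b(f_{n_k})\,\mm)(\X)\to{\bf L}_f(b)(\X)$ rather than merely $\liminf$. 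Boundedness of $b\mapsto{\bf L}_f(b)$ from $\overline{\Der}^\infty_\infty(\X)$ to $\mathcal M(\X)$ follows from $\|{\bf L}_f(b)\|_{\mathcal M(\X)}=|{\bf L}_f(b)|(\X)\leq\||b|\|_{L^\infty(\mm)}\,|{\bf D}f|_*(\X)=\||b|\|_{L^\infty(\mm)}\,\|{\bf D}f\|_*$.

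A technical subtlety I expect to be the main obstacle is making the weak-$*$ compactness and mass-conservation arguments work \emph{uniformly in $b$}: a priori the subsequence $(n_k)$ extracted to define ${\bf L}_f(b)$ depends on $b$, so one must check that the limit is well-defined (independent of the subsequence) and $L^\infty(\mm)$-linear. The clean way around this is to bypass explicit subsequences: define ${\bf L}_f(b)$ via Riesz representation by prescribing $\int h\,\d{\bf L}_f(b)\coloneqq-\int f\,\div(hb)\,\d\mm$ for $h\in\LIP_\star(\X)$ (mimicking Remark \ref{rmk:L_f_unique}), check this extends to a bounded functional on $\rmC_b(\X)$ using the uniform bound $|\int h\,b(f_n)\,\d\mm|\leq\|h\|_{\rmC_b}\||b|\|_{L^\infty}\int\lip_a(f_n)\,\d\mm$ and density of $\LIP_\star(\X)$ from \eqref{eq:Stone-Weiestrass}, and then verify the module and locality properties and the pointwise bound by the localized estimates above. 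In the non-complete but Radon case, the only extra care is that $|{\bf D}f|_*$ need not be Radon a priori (Remark \ref{rmk:|Df|_*_Radon}), but it is still concentrated on the separable set ${\rm spt}(\mm)$ and the above Riesz-representation construction produces a Radon measure ${\bf L}_f(b)$ directly, so the comparison $|{\bf D}f|\leq|{\bf D}f|_*$ is unaffected.
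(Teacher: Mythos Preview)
Your strategy is essentially the paper's: approximate $f$ by $(f_n)_n\subseteq\LIP_\star(\X)$, use tightness (coming from the Radonness of $|{\bf D}f|_*$, which holds under \emph{either} hypothesis --- in a Radon space every finite Borel measure is Radon by definition, so the worry in your last paragraph is misplaced) to extract weak limits $b(f_n)\mm\rightharpoonup{\bf L}_f(b)$, identify the limit by testing against $h\in\LIP_\star(\X)$ via the identity $\int h\,b(f_n)\,\d\mm=-\int f_n\,\div(hb)\,\d\mm$ (which is exactly the formula in Remark~\ref{rmk:L_f_unique} and what the paper does too), and then read off the structural properties \eqref{eq:def_BV_der_1}--\eqref{eq:def_BV_der_3}.

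There is, however, a gap in your argument for the measure inequality $|{\bf D}f|\leq|{\bf D}f|_*$. First, the expression ``$|b|\cdot|{\bf D}f|_*$'' is ill-defined: $|b|$ is only an $\mm$-equivalence class and $|{\bf D}f|_*$ is in general not absolutely continuous with respect to $\mm$; the correct statement is $|{\bf L}_f(b)|\leq|{\bf D}f|_*$ whenever $|b|\leq 1$. More seriously, your argument via open sets does not close. You can indeed get $|{\bf L}_f(b)|(U)\leq\liminf_k\int_U\lip_a(f_{n_k})\,\d\mm$ from lower semicontinuity of the total variation, but Portmanteau gives only $|{\bf D}f|_*(U)\leq\liminf_n\int_U\lip_a(f_n)\,\d\mm$ on open sets --- the wrong direction to conclude. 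The paper handles this by testing against continuous cutoffs $0\leq\eta\leq\1_U$, for which weak convergence yields the \emph{equality} $\int\eta\,\lip_a(f_n)\,\d\mm\to\int\eta\,\d|{\bf D}f|_*\leq|{\bf D}f|_*(U)$, and then descends to compact sets using that ${\bf L}_f(b)$ is Radon. Your Riesz-representation alternative is a clean way to get well-definedness and linearity of ${\bf L}_f$, but you still need this cutoff step (or an equivalent) to obtain the measure comparison.
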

\begin{proof}
Let \(f\in{\rm BV}_{\rm H}(\X)\) be given. Fix any \(b\in{\rm Der}^\infty_\infty(\X)\). Take a sequence \((f_n)_n\subseteq\LIP_\star(\X)\) such that
\(f_n\to f\) in \(L^1(\mm)\) and \(\int\lip_a(f_n)\,\d\mm\to\|{\bf D}f\|_*\). Accounting for Remark \ref{rmk:Portmanteau}, we have that
\(\lip_a(f_n)\mm\rightharpoonup|{\bf D}f|_*\) weakly. Since \(|{\bf D}f|_*\) is a Radon measure by Remark \ref{rmk:|Df|_*_Radon}, we deduce from
\cite[Theorem 8.6.4]{Bog:07} that \((\lip_a(f_n)\mm)_n\) is a tight sequence.
As \(|b(f_n)\mm|=|b(f_n)|\mm\leq|b|\lip_a(f_n)\mm\) for every \(n\in\N\), we deduce that \((b(f_n)\mm)_n\) is a tight sequence as well.
Since \((b(f_n)\mm)_n\) is also bounded in total variation norm, by \cite[Theorem 8.6.7]{Bog:07} we deduce that, up to a non-relabelled
subsequence, \(b(f_n)\mm\rightharpoonup{\bf L}_f(b)\) weakly for some \({\bf L}_f(b)\in\mathcal M(\X)\). For any \(h\in\LIP_\star(\X)\),
we can compute
\[
\int h\,\d{\bf L}_f(b)=\lim_{n\to\infty}\int h\,b(f_n)\,\d\mm=-\lim_{n\to\infty}\int f_n(h\,\div(b)+b(h))\,\d\mm=-\int f(h\,\div(b)+b(h))\,\d\mm.
\]
Therefore, the measure \({\bf L}_f(b)\) is independent of the approximating sequence and \(b(f_n)\mm\rightharpoonup{\bf L}_f(b)\) holds for the
original sequence \((f_n)_n\). In particular, we have that \({\bf L}_f(b)\) is concentrated on \({\rm spt}(\mm)\),
proving \eqref{eq:def_BV_der_3}. If \(b,\tilde b\in\Der^\infty_\infty(\X)\) and \(\lambda\in\R\),
then \((\lambda b+\tilde b)(f_n)\mm=\lambda\,b(f_n)\mm+\tilde b(f_n)\mm\) weakly converges both to
\({\bf L}_f(\lambda b+\tilde b)\) and \(\lambda\,{\bf L}_f(b)+{\bf L}_f(\tilde b)\), so that \({\bf L}_f\) is linear.
Given \(h,\tilde h\in\LIP_\star(\X)\), we have that \(\int\tilde h\,\d{\bf L}_f(hb)=\lim_n\int\tilde h(hb)(f_n)\,\d\mm
=\lim_n\int(\tilde h h)b(f_n)\,\d\mm=\int\tilde h h\,\d{\bf L}_f(b)=\int\tilde h\,\d(h\,{\bf L}_f(b))\), which yields
\eqref{eq:def_BV_der_2} thanks to \eqref{eq:Stone-Weiestrass}. Moreover, for every \(b\in\Der^\infty_\infty(\X)\)
we can compute
\[
{\bf L}_f(b)(\X)=\lim_{n\to\infty}\int b(f_n)\,\d\mm=-\lim_{n\to\infty}\int f_n\,\div(b)\,\d\mm=-\int f\,\div(b)\,\d\mm,
\]
which gives \eqref{eq:def_BV_der_1}. Next, fix a derivation \(b\in\Der^\infty_\infty(\X)\) with \(|b|\leq 1\)
and a compact set \(K\subseteq\X\). We claim that
\begin{equation}\label{eq:dimarino}
{\bf L}_f(b)(K)\leq|{\bf D}f|_*(K),    
\end{equation}
whence (since \(|{\bf L}_f(b)|\) is a Radon measure) it follows that \(f\in{\rm BV}_{\rm W}(\X)\) and
\(|{\bf D}f|\leq|{\bf D}f|_*\). To prove the claim, pick a sequence of open sets $(U_i)_i$ such that $K \subset U_i$ for every $i \in \N$ and satisfying ${\bf L}_f(b)(U_i)\to{\bf L}_f(b)(K)$, $|{\bf D}f|_*(U_i) \to |{\bf D}f|_*(K)$.
Now fix \(i\in\N\) and take a non-decreasing sequence \((\eta_{i,j})_j\) of Lipschitz cut-off functions
\(\eta_{i,j}\colon\X\to[0,1]\) such that \(\eta_{i,j}(x)\nearrow\1_{U_i}(x)\) as \(j\to\infty\)
for every \(x\in\X\). Note that 
\[\begin{split}
\int\eta_{i,j}\,\d{\bf L}_f(b)&=\lim_{n\to\infty}\int\eta_{i,j}\,b(f_n)\,\d\mm
\leq\liminf_{n\to\infty}\int\eta_{i,j}|b|\,\lip_a(f_n)\,\d\mm\\
& \leq\liminf_{n\to\infty}\int\eta_{i,j}\,\lip_a(f_n)\,\d\mm  = \int\eta_{i,j}\,\d |{\bf D}f|_* \leq |{\bf D}f|_*(U_i)
\end{split}\]
for every \(i,j\in\N\). Applying the dominated convergence theorem, we deduce that
\[
{\bf L}_f(b)(U_i)=\lim_{j\to\infty}\int\eta_{i,j}\,\d{\bf L}_f(b)\leq|{\bf D}f|_*(U_i)\quad\text{ for every }i\in\N.
\]
Therefore, we can finally conclude that
\[
{\bf L}_f(b)(K)=\lim_{i\to\infty}{\bf L}_f(b)(U_i)\leq\lim_{i\to\infty}|{\bf D}f|_*(U_i)=|{\bf D}f|_*(K),
\]
which gives \eqref{eq:dimarino}. Thus, the proof is complete.
\end{proof}
\begin{remark}\label{rmk:BV_H_loc_in_BV_W_bis}{\rm
In the case where \((\X,\sfd)\) is complete, by slightly adapting the proof of Lemma \ref{lem:BV_H_loc_in_BV_W}
(and taking also Proposition \ref{prop:approx_with_glob_Lip} into account) one can prove that
\({\rm BV}_{\rm H}(\X)\subseteq{\rm BV}_{\rm W}(\X;\LIP_b(\X))\) and \(|{\bf D}f|_{\LIP_b(\X)}\leq|{\bf D}f|_*\)
for every \(f\in{\rm BV}_{\rm H}(\X)\).
\fr}\end{remark}
\section{Equivalence results}
Given a metric space \((\X,\sfd)\) and \(x\in\X\), we define the \(1\)-Lipschitz function \(\sfd_x\colon\X\to[0,+\infty)\) as
\[
\sfd_x(y)\coloneqq\sfd(x,y)\quad\text{ for every }y\in\X.
\]
\begin{definition}[Good algebra]\label{def:good_algebra}
Let \((\X,\sfd,\mm)\) be a metric measure space. Let \(\mathscr A\) be a unital separating subalgebra
of \(\LIP_b(\X)\).
Then we say that \(\mathscr A\) is a \textbf{good algebra} if, given any \(x\in\X\) and \(r>0\), there exist
\((f_n)_n\subseteq\mathscr A\) and \(\rho\in L^\infty(\mm)\) with \(0\leq\rho\leq 1\) such that
\(\|f_n-\sfd_x\wedge r\|_{L^1(\mm)}\to 0\) and \(\lip_a(f_n)\rightharpoonup\rho\) weakly in \(L^1(\mm)\).
\end{definition}

Note that a unital separating subalgebra \(\mathscr A\) of \(\LIP_b(\X)\) is a good algebra precisely when
\[
\sfd_x\wedge r\in{\rm H}^{1,1}(\X;\mathscr A),\qquad|{\rm D}(\sfd_x\wedge r)|_{*,\mathscr A}\leq 1
\quad\text{ for every }x\in \X\text{ and }r>0.
\]
Clearly, \(\LIP_b(\X)\) itself is a good algebra. 
\begin{example}[Some good algebras]\label{ex:good_alg}
{\rm In \cite{Sav:22} is introduced a notion of \emph{compatible} algebra in the setting of
\emph{extended metric-topological measure spaces}. These are Hausdorff topological spaces $(\X, \tau)$
endowed with a (possibly extended) metric $\sfd$ which has good compatibility properties with $\tau$,
together with a finite, non-negative, Radon measure $\mm$ (on $(\X, \tau)$). Whenever $\tau$ coincides
with the topology induced by $\sfd$ and \(\sfd\) is finite, we recover our setting, i.e.\ $(\X,\sfd,\mm)$
is a metric measure space in the sense of Definition \ref{def:mms}. In this case, an algebra $\mathscr A$
is said to be compatible if
\[
\sfd(x,y) = \sup\left \{ \frac{|f(x)-f(y)|}{\Lip(f)} \;\bigg|\;f \in \mathscr A,\,\Lip(f)\neq 0\right\}
\quad\text{ for every }x,y \in \X.
\]
It is not difficult to check that this implies that the functions $\sfd_x$, $x \in\X$ can be approximated in the following sense: there exists a sequence $(f_n)_n \subset \mathscr A$ such that
\[
f_n \to \sfd_x \quad \text{$\mm$-a.e.}, \quad \lip_a(f_n) \rightharpoonup \rho \text{ in 
} L^1(\mm) \quad \text{ for some } 0 \le \rho \le 1.
\]
This is precisely (even if there is considered only for $p>1$) the density condition in \cite[Theorem 2.13]{Fo:Sa:So:23}.
In turn this last condition implies that $\mathscr A$ is a good algebra according to Definition \ref{def:good_algebra},
simply using a truncation argument of the distance. Therefore, a variety of algebras in many metric spaces are good algebras:
\begin{enumerate}
\item Smooth functions in Riemannian manifolds and Euclidean spaces \cite[Remark 2.21]{Fo:Sa:So:23}.
\item Cylinder functions in Banach and Hilbert spaces \cite[Example 2.1.19]{Sav:22}.
\item Algebras generated by approximating distances \cite[Example 2.1.20]{Sav:22}.
\item Cylinder functions in the Wasserstein, Hellinger and Hellinger--Kantorovich metric spaces of (probability) measures \cite{Sod:23, Fo:Sa:So:23, Ds:So:25}. \fr
\end{enumerate}
}\end{example}
\begin{remark}\label{rmk:improved_good_alg}{\rm
If \(\mathscr A\) is a good algebra, then the approximation \((f_n)_n\) of \(\sfd_x\wedge r\) can be also chosen so that
\(0\leq f_n\leq r\) for every \(n\in\N\) and $\lip_a(f_n) \to |{\rm D}(\sfd_x\wedge r)|_{*,\mathscr A}$ strongly in $L^1(\mm)$.
To prove it, we first take a sequence \((g_n)_n\subseteq\mathscr A\) such that \(g_n\to\sfd_x\wedge r\)
in \(L^1(\mm)\) and \(\lip_a(g_n)\leq G_n\) for some \((G_n)_n\) strongly converging in \(L^1(\mm)\) to
\(|{\rm D}(\sfd_x\wedge r)|_{*,\mathscr A}\), whose existence stems from Mazur's lemma and the convexity
of \(f\mapsto\lip_a(f)\). Note that \(u_n\coloneqq(g_n\wedge r)\vee 0\to\sfd_x\wedge r\) in \(L^1(\mm)\). Thanks to
\cite[Lemma 2.1.26]{Sav:22}, we can find a sequence \((\tilde f_n)_n\subseteq\mathscr A\) such that
\(|\tilde f_n-u_n|\leq\frac{1}{n}\) and \(\lip_a(\tilde f_n)\leq\lip_a(u_n)\leq\lip_a(g_n)\leq G_n\) for all \(n\in\N\).
By the Dunford--Pettis theorem and the minimality of $|{\rm D}(\sfd_x\wedge r)|_{*,\mathscr A}$, we thus have
(up to a non-relabelled subsequence) that \(\lip_a(\tilde f_n)\rightharpoonup|{\rm D}(\sfd_x\wedge r)|_{*,\mathscr A}\)
weakly in \(L^1(\mm)\). In particular, testing against the constant function \(\1_\X\in L^\infty(\mm)\) we deduce that
\(\int_\X\lip_a(\tilde{f}_n)\,\d\mm\to\int_\X|{\rm D}(\sfd_x\wedge r)|_{*,\mathscr A} \, \d \mm\), so that
\[
c_n\coloneqq\frac{\int_\X|{\rm D}(\sfd_x\wedge r)|_{*,\mathscr A} \, \d \mm}{\int_\X\lip_a(\tilde{f}_n)\,\d\mm}\to 1
\quad\text{ as }n\to\infty.
\]
Therefore, the functions $\bar{f}_n \coloneqq c_n\tilde f_n\in\mathscr A$ satisfy $\bar{f}_n \to \sfd_x \wedge r$ strongly
in $L^1(\mm)$ as \(n\to\infty\), along with $\int_\X\lip_a(\bar f_n)\,\d\mm=\int_\X|{\rm D}(\sfd_x\wedge r)|_{*,\mathscr A}\,\d\mm$ for every \(n\in\N\).
In particular, we have that
\begin{align*}
    \int_\X | \lip_a(\bar f_n) - |{\rm D}(\sfd_x\wedge r)|_{*,\mathscr A}| \, \d \mm &= 2 \int_\X ( \lip_a(\bar f_n) - |{\rm D}(\sfd_x\wedge r)|_{*,\mathscr A})^+ \, \d \mm \\
    & \le \int_\X ( c_nG_n - |{\rm D}(\sfd_x\wedge r)|_{*,\mathscr A})^+ \, \d \mm \to 0.
\end{align*}
Finally, letting
\(f_n\coloneqq c_n^{-1}r\big(r+\frac{2}{n}\big)^{-1}\big(\bar f_n+\frac{c_n}{n}\big)\) for all \(n\in\N\),
we obtain the desired sequence.}
\fr\end{remark}
\begin{proposition}\label{prop:approx_with_A}
Let \((\X,\sfd,\mm)\) be a metric measure space and \(\mathscr A\) a good subalgebra of \(\LIP_b(\X)\).
Let \(f\in\LIP_b(\X)\) be given. Then there exists a sequence \((f_n)_n\subseteq\mathscr A\) such that the following hold:
\begin{itemize}
\item[\(\rm i)\)] \(\inf_X f\leq f_n\leq\sup_\X f\) for every \(n\in\N\).
\item[\(\rm ii)\)] \(f_n\to f\) in \(L^1(\mm)\) as \(n\to\infty\).
\item[\(\rm iii)\)] \((\lip_a(f_n))_n\) is \textbf{dominated} in \(L^1(\mm)\), i.e.\ there exists \(G\in L^1(\mm)^+\)
such that \(\lip_a(f_n)\leq G\) holds \(\mm\)-a.e.\ on \(\X\) for every \(n\in\N\).
\item[\(\rm iv)\)] \((\lip_a(f_n)-\Lip(f))^+\to 0\) in \(L^1(\mm)\) as \(n\to \infty\).
\end{itemize}
\end{proposition}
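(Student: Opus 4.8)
The plan is to approximate $f$ first by McShane-type functions built from truncated distances, then to approximate each of these inside $\mathscr A$ using the good-algebra hypothesis together with a smoothed-minimum device, and finally to diagonalise.

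\smallskip
\noindent\emph{Reduction to simple functions.} If $\Lip(f)=0$ then $f$ is constant and $f_n\coloneqq f$ does the job, so I assume $L\coloneqq\Lip(f)>0$ and set $R\coloneqq(\sup_\X f-\inf_\X f)/L>0$. I pick a countable set $\{x_i\}_{i\in\N}\subseteq{\rm spt}(\mm)$ dense in ${\rm spt}(\mm)$ (recall ${\rm spt}(\mm)$ is separable) and put $g_k\coloneqq\min_{1\le i\le k}\big(f(x_i)\1_\X+L(\sfd_{x_i}\wedge R)\big)$. Each of the finitely many summands is globally $L$-Lipschitz, hence $\lip_a(g_k)\le\Lip(g_k)\le L$ everywhere; using the definition of $R$ to treat the points with $\sfd_{x_i}(y)>R$ one checks $g_k\ge f\ge\inf_\X f$, so $g_k$ is uniformly bounded, and $g_k(x)\to f(x)$ for every $x\in\overline{\{x_i\}_i}\supseteq{\rm spt}(\mm)$; hence $g_k\to f$ in $L^1(\mm)$ by dominated convergence. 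So it is enough to produce, for each fixed $k$, a sequence $(f^{(k)}_m)_m\subseteq\mathscr A$ with $\inf_\X f\le f^{(k)}_m\le\sup_\X f$, with $f^{(k)}_m\to g_k$ in $L^1(\mm)$, with $(\lip_a(f^{(k)}_m))_m$ dominated in $L^1(\mm)$, and with $\|(\lip_a(f^{(k)}_m)-L)^+\|_{L^1(\mm)}\to0$: one then sets $f_k\coloneqq f^{(k)}_{m(k)}$ for $m(k)$ so large that $\|f^{(k)}_{m(k)}-g_k\|_{L^1(\mm)}\le2^{-k}$ and $\|(\lip_a(f^{(k)}_{m(k)})-L)^+\|_{L^1(\mm)}\le2^{-k}$. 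Items i), ii), iv) are then clear, and iii) follows because $H\coloneqq\sum_k(\lip_a(f_k)-L)^+\in L^1(\mm)$ (its $L^1$-norm is $\le\sum_k2^{-k}$), so $\lip_a(f_k)\le L\1_\X+H\in L^1(\mm)$ for every $k$.

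\smallskip
\noindent\emph{Approximating $g_k$ inside $\mathscr A$.} Write $v_i\coloneqq f(x_i)\1_\X+L(\sfd_{x_i}\wedge R)$. By the good-algebra hypothesis and Remark \ref{rmk:improved_good_alg} there are $(\hat w_i^{(n)})_n\subseteq\mathscr A$ with $0\le\hat w_i^{(n)}\le R$, $\hat w_i^{(n)}\to\sfd_{x_i}\wedge R$ in $L^1(\mm)$ and $\lip_a(\hat w_i^{(n)})\rightharpoonup\rho_i$ weakly in $L^1(\mm)$ with $0\le\rho_i\le1$. Applying Mazur's lemma to the slopes and passing to a subsequence, I replace these by convex combinations $(w_i^{(n)})_n\subseteq\mathscr A$, still with values in $[0,R]$, such that $w_i^{(n)}\to\sfd_{x_i}\wedge R$ in $L^1(\mm)$, $\lip_a(w_i^{(n)})\le\tilde G_i\in L^1(\mm)^+$ for all $n$, and $\|(\lip_a(w_i^{(n)})-1)^+\|_{L^1(\mm)}\to0$. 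Then $v_i^{(n)}\coloneqq f(x_i)\1_\X+Lw_i^{(n)}\in\mathscr A$ satisfies $v_i^{(n)}\to v_i$ in $L^1(\mm)$, $\lip_a(v_i^{(n)})\le G_i\coloneqq L\tilde G_i$ and $\|(\lip_a(v_i^{(n)})-L)^+\|_{L^1(\mm)}\to0$, and all the $v_i,v_i^{(n)}$ take values in $[a,b]\coloneqq[\inf_\X f,\sup_\X f+LR]$. For $\beta>0$ I use the smoothed minimum $\Phi_\beta(t_1,\dots,t_k)\coloneqq-\tfrac1\beta\log\sum_{i=1}^k e^{-\beta t_i}$, which is smooth, satisfies $\min_i t_i-\tfrac{\log k}{\beta}\le\Phi_\beta(t)\le\min_i t_i$, and, crucially, has $\partial_i\Phi_\beta>0$ with $\sum_{i=1}^k\partial_i\Phi_\beta\equiv1$; for $\delta>0$ I pick, by a standard multivariate polynomial approximation in the $C^1$-norm on the box $[a,b]^k$, a polynomial $P$ with $\|P-\Phi_\beta\|_{C^1([a,b]^k)}\le\delta$. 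Then $P(v_1^{(n)},\dots,v_k^{(n)})\in\mathscr A$, and by integrating $\nabla P$ along segments inside $[a,b]^k$ and using $|\partial_iP|\le\partial_i\Phi_\beta+\delta$ (so that $\sum_i\partial_i\Phi_\beta(\cdot)|v_i^{(n)}(y)-v_i^{(n)}(z)|\le\max_i|v_i^{(n)}(y)-v_i^{(n)}(z)|$) one obtains, $\mm$-a.e.,
\[
\lip_a\big(P(v_1^{(n)},\dots,v_k^{(n)})\big)\le\max_{1\le i\le k}\lip_a(v_i^{(n)})+\delta\sum_{i=1}^k\lip_a(v_i^{(n)})\le L\1_\X+\sum_{i=1}^k(\lip_a(v_i^{(n)})-L)^++\delta\sum_{i=1}^k G_i,
\]
together with $\|P(v_1^{(n)},\dots,v_k^{(n)})-g_k\|_{L^1(\mm)}\le 2\sum_i\|v_i^{(n)}-v_i\|_{L^1(\mm)}+\big(\delta+\tfrac{\log k}{\beta}\big)\mm(\X)$. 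Letting $\beta_m\to\infty$, then $\delta_m\to0$, then $n_m\to\infty$ suitably (and passing to a subsequence to gain domination), and finally replacing the outcome by $\big(\cdot\wedge\sup_\X f\big)\vee\inf_\X f$ — which does not increase $\lip_a$ and lands in $[\inf_\X f,\sup_\X f]$ — followed by \cite[Lemma 2.1.26]{Sav:22} and a small affine rescaling to re-enter $\mathscr A$ without spoiling the estimates (exactly as in Remark \ref{rmk:improved_good_alg}), I obtain the sequence $(f^{(k)}_m)_m$ required above.

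\smallskip
\noindent\emph{Main obstacle.} The delicate point is the slope estimate for $g_k$: handling the minimum through $\min(s,t)=\tfrac12(s+t-|s-t|)$ and a Weierstrass approximation of $|\cdot|$ would only give $\lip_a\lesssim\sum_i\lip_a(v_i^{(n)})$, which is useless for item iv). The key is that the smoothed minimum is monotone in each variable with gradient summing to $1$, so composing it — or a $C^1$-close polynomial of it — with the $v_i^{(n)}$ costs only a factor $1+o(1)$ and an $L^1(\mm)$-small additive error over $\max_i\lip_a(v_i^{(n)})$. Equivalently one could first show $g_k\in{\rm H}^{1,1}(\X;\mathscr A)$ with $|{\rm D}g_k|_{*,\mathscr A}\le L$ by combining closure of ${\rm H}^{1,1}(\X;\mathscr A)$ under finite minima with the locality \eqref{eq:locality_rel_slope} of the minimal relaxed slope, and then extract a recovery sequence; but this route needs some care at the exponent $p=1$. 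The remaining ingredients — Stone--Weierstrass density, Mazur's lemma, the double diagonal argument, and the $\mm$-a.e. convergence $g_k\to f$ — are routine given the tools already set up.
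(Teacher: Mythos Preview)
Your proof is correct and follows the same overall architecture as the paper's --- McShane-type approximation of $f$ by finite extrema of truncated distances, then approximation of each such extremum inside $\mathscr A$, then diagonalisation --- but you handle the central technical step differently. The paper works with maxima $\tilde g_1\vee\cdots\vee\tilde g_n$ and invokes \cite[Lemma~2.1.26]{Sav:22} directly to produce an element of $\mathscr A$ uniformly close to the max with $\lip_a$ bounded by the max of the individual slopes; the bound $\lip_a(\tilde f_n)\le L+h_n$ then falls out immediately. You instead work with minima and build the approximant by hand: the soft-min $\Phi_\beta$ has the crucial feature $\partial_i\Phi_\beta>0$, $\sum_i\partial_i\Phi_\beta\equiv 1$, so its $C^1$-close polynomial substitute $P$ yields $\lip_a\big(P(v_1^{(n)},\dots,v_k^{(n)})\big)\le\max_i\lip_a(v_i^{(n)})+\delta\sum_iG_i$, which is exactly the estimate needed for iv). This is a legitimate and self-contained alternative to the black-box lemma. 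You are also explicit about Mazur's lemma to turn the weak convergence of slopes (all the good-algebra hypothesis gives) into an $L^1$-strong one, which the paper uses implicitly when it chooses $j(n)$ so that $\|\lip_a(f_k^{j(n)})-|{\rm D}\tilde g_k|_{*,\mathscr A}\|_{L^1(\mm)}$ is small.

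Two minor points to tighten: after the final truncation your $(f^{(k)}_m)_m$ converges to $(g_k\wedge\sup_\X f)\vee\inf_\X f$ rather than to $g_k$ itself (since $g_k$ may exceed $\sup_\X f$ off the dense set), but this is harmless because the truncated $g_k$ still converges to $f$ in $L^1(\mm)$ and that is all the diagonal step needs; and the triple limit ``$\beta_m\to\infty$, then $\delta_m\to0$, then $n_m\to\infty$'' should be written as a single diagonal choice to make the domination claim for $(f^{(k)}_m)_m$ transparent. Neither affects correctness.
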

\begin{proof}
Without loss of generality, we can assume that \(\Lip(f)>0\). Fix a sequence \((x_k)_k\subseteq\X\) that is
dense in \({\rm spt}(\mm)\). Define \(L\coloneqq\Lip(f)\), \(m\coloneqq\inf_\X f\), \(M\coloneqq\sup_\X f\) and
\[
\tilde g_k\coloneqq(f(x_k)-L\sfd_{x_k})\vee m=f(x_k)-L\bigg(\sfd_{x_k}\wedge\frac{f(x_k)-m}{L}\bigg)
\quad\text{ for every }k\in\N.
\]
Notice that \(\tilde g_k\in\LIP_b(\X)\) and \(\Lip(\tilde g_k)\leq L\). Since \(\mathscr A\) is a good algebra, by Remark \ref{rmk:improved_good_alg}, 
we can find \((f^j_k)_j\subseteq\mathscr A\) such that \(f^j_k\to\tilde g_k\), 
\(\lip_a(f^j_k)\to |{\rm D}\tilde g_k|_{*,\mathscr A} \leq L\) in \(L^1(\mm)\) as \(j\to\infty\), and \(m\leq f^j_k\leq M\) for every \(k,j\in\N\). Define
\[
g_n\coloneqq\tilde g_1\vee\ldots\vee\tilde g_n\in\LIP_b(\X)\quad\text{ for every }n\in\N.
\]
Given that \(m\leq g_n\leq M\) for every \(n\in\N\) and \(g_n(x)\to f(x)\) for every \(x\in{\rm spt}(\mm)\),
we have that \(g_n\to f\) in \(L^1(\mm)\) by the dominated convergence theorem. Now fix \(n\in\N\). Since
\(f^j_1\vee\ldots\vee f^j_n\to g_n\) in \(L^1(\mm)\) as \(j\to\infty\), there exists \(j(n)\in\N\) such that
\(\|f^{j(n)}_1\vee\ldots\vee f^{j(n)}_n-g_n\|_{L^1(\mm)}\leq 1/n\) and
\[
\| h_n^k\|_{L^1(\mm)} \le \frac{1}{n2^n}\quad\text{ for every }k=1,\ldots,n,
\text{ where } h_n^k\coloneqq\big| \lip_a(f_k^{j(n)})-|{\rm D}\tilde g_k|_{*,\mathscr A}\big|.
\]
By \cite[Lemma 2.1.26]{Sav:22}, we can find a function \(\tilde f_n\in\mathscr A\) such that
\(\sup_\X|f^{j(n)}_1\vee\ldots\vee f^{j(n)}_n-\tilde f_n|\leq 1/n\) and
\(\lip_a(\tilde f_n)\leq\lip_a(f_1^{j(n)})\vee\ldots\vee\lip_a(f_n^{j(n)})\). In particular,
\(m-\frac{1}{n}\leq\tilde f_n\leq M+\frac{1}{n}\) for every \(n\in\N\). It also follows that
\(\|\tilde f_n-g_n\|_{L^1(\mm)}\leq(1+\mm(\X))/n\) and
\[
\lip_a(\tilde f_n) \le ( |{\rm D}\tilde g_1|_{*,\mathscr A} +h_n^1) \vee \dots \vee
( |{\rm D}\tilde g_n|_{*,\mathscr A} +h_n^n) \le L + h_n^1 \vee \dots \vee h_n^n \leq L + h_n^1 + \dots + h_n^n.
\]
Note that $h_n\coloneqq h_n^1 + \dots + h_n^n$ satisfies $\|h_n\|_{L^1(\mm)} \le 2^{-n}$, thus $h\coloneqq\sum_{n=1}^\infty h_n$
belongs to $L^1(\mm)$. Therefore, $\tilde f_n \to f$ in $L^1(\mm)$ as $n \to \infty$ and $\lip_a(\tilde f_n)\le L+h \in L^1(\mm)$
for every $n \in \N$. Moreover, we have that $(\lip_a(\tilde f_n) - L)^+ \le h_n \to 0$ in $L^1(\mm)$ as $n \to \infty$.
Finally, defining \((f_n)_n\subseteq\mathscr A\) as
\[
f_n\coloneqq c_n\bigg(\tilde f_n-m+\frac{1}{n}\bigg)+m,\quad\text{ where we set }c_n\coloneqq\frac{M-m}{M-m+\frac{2}{n}},
\]
one can then easily check that \((f_n)_n\) satisfies i), ii), iii) and iv). The statement is achieved.
\end{proof}
The following two results contain the first steps to prove the equivalence of definitions of BV spaces in Theorem \ref{thm:equivalence_A}, in particular the inclusion \({\rm BV}_{\rm W}(\X;\LIP_b(\X)) \subseteq {\rm BV}_{\rm W}(\X;\mathscr A)\) contained in Corollary \ref{cor:BV_W_indep_A}, which is an immediate consequence of the next Theorem \ref{thm:indep_der}.
\begin{theorem}\label{thm:indep_der}
Let \((\X,\sfd,\mm)\) be a metric measure space. Let \(\mathscr A\) be a good subalgebra of $\LIP_b(\X)$. Define the operator
\(\phi_{\mathscr A}\colon{\rm Der}^\infty_\infty(\X;\LIP_b(\X))\to{\rm Der}^\infty_\infty(\X;\mathscr A)\) as
\[
\phi_{\mathscr A}(b)\coloneqq b|_{\mathscr A}\in{\rm Der}^\infty_\infty(\X;\mathscr A)
\quad\text{ for every }b\in{\rm Der}^\infty_\infty(\X;\LIP_b(\X)).
\]
Then \(\phi_{\mathscr A}\) is an isomorphism of \(\mathscr A\)-modules such that the identity \(\div(\phi_{\mathscr A}(b))=\div(b)\)
holds for every \(b\in\Der^\infty_\infty(\X;\LIP_b(\X))\). Moreover, it holds that
\[
|\phi_{\mathscr A}(b)|_{\mathscr A}=|b|\quad\text{ for every }b\in{\rm Der}^\infty_\infty(\X;\LIP_b(\X)).
\]
\end{theorem}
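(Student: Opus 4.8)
The plan is to prove the three assertions — that $\phi_{\mathscr A}$ is an $\mathscr A$-module isomorphism, that it preserves divergence, and that it preserves pointwise norms — in this order, although in practice the injectivity and norm-preservation will be intertwined. The map $\phi_{\mathscr A}$ is clearly $\mathscr A$-linear (restriction of an $L^\infty(\mm)$-linear map is $\mathscr A$-linear, and $hb|_{\mathscr A} = h\,b|_{\mathscr A}$ for $h \in \mathscr A$), so the content is bijectivity and the two identities. The divergence identity is the easy part: for $b \in {\rm Der}^\infty_\infty(\X;\LIP_b(\X))$ we have $\int b(f)\,\d\mm = -\int f\,\div(b)\,\d\mm$ for all $f \in \LIP_b(\X)$, hence in particular for all $f \in \mathscr A$, so $\phi_{\mathscr A}(b) \in {\rm Der}^\infty_\infty(\X;\mathscr A)$ with divergence $\div(b)$; this is exactly the remark on independence of $\div$ from the algebra, applied to $\mathscr A \subseteq \LIP_b(\X)$.

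For the core of the argument, the plan is to exploit Proposition \ref{prop:approx_with_A}: every $f \in \LIP_b(\X)$ admits $(f_n)_n \subseteq \mathscr A$ with $\inf f \le f_n \le \sup f$, $f_n \to f$ in $L^1(\mm)$, $(\lip_a(f_n))_n$ dominated by some $G \in L^1(\mm)^+$, and $(\lip_a(f_n) - \Lip(f))^+ \to 0$ in $L^1(\mm)$. Given $b \in {\rm Der}^\infty_\infty(\X;\mathscr A)$, I would first extend it to $\LIP_b(\X)$ by setting $\tilde b(f) \coloneqq L^1(\mm)\text{-}\lim_n b(f_n)$; the estimate $|b(f_n) - b(f_m)| \le |b|_{\mathscr A}\,\lip_a(f_n - f_m)$ together with a diagonal/truncation argument shows this limit exists, is independent of the approximating sequence (two sequences can be interleaved), and defines a linear map satisfying the Leibniz rule on $\LIP_b(\X)$ (pass to the limit in $b(f_n g_n) = f_n b(g_n) + g_n b(f_n)$, using the uniform bounds $\inf \le f_n, g_n \le \sup$ and dominated convergence). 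The bound $|\tilde b(f)| \le |b|_{\mathscr A}\,\lip_a(f)$ $\mm$-a.e.\ for $f \in \LIP_b(\X)$ follows from the weak-$L^1$ lower semicontinuity applied to $|b(f_n)| \le |b|_{\mathscr A}\,\lip_a(f_n)$ after using property iv) to control $\lip_a(f_n)$ by $\Lip(f)$ in the limit — more carefully, by a localisation argument one shows $|\tilde b(f)| \le |b|_{\mathscr A}\,\lip_a(f)$ by testing against characteristic functions of small balls where $\lip_a(f)$ is nearly constant. Hence $\tilde b \in {\rm Der}^\infty(\X;\LIP_b(\X))$ with $|\tilde b| \le |b|_{\mathscr A}$, and since $\int \tilde b(f)\,\d\mm = \lim_n \int b(f_n)\,\d\mm = -\lim_n \int f_n \div(b)\,\d\mm = -\int f \div(b)\,\d\mm$, we get $\tilde b \in {\rm Der}^\infty_\infty(\X;\LIP_b(\X))$. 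Clearly $\phi_{\mathscr A}(\tilde b) = b$, giving surjectivity; injectivity is immediate since $\mathscr A$ is dense in $L^1(\mm)$ and a derivation vanishing on $\mathscr A$ has $b(f) = \lim_n b(f_n) = 0$. The inverse is therefore $b \mapsto \tilde b$, and the construction shows it is $\mathscr A$-linear, so $\phi_{\mathscr A}$ is an $\mathscr A$-module isomorphism.

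For the norm identity, the inequality $|\phi_{\mathscr A}(b)|_{\mathscr A} \le |b|$ is trivial since $b|_{\mathscr A}$ satisfies the defining inequality of Definition \ref{def:der} i) with the same $G = |b|$. The reverse inequality $|b| \le |b|_{\mathscr A} = |\phi_{\mathscr A}(b)|_{\mathscr A}$ is where the work lies: one must show that the $\mm$-a.e.\ minimal $G$ controlling $|b(f)|$ over all $f \in \LIP_b(\X)$ is no larger than the one over $f \in \mathscr A$. This follows from the bound $|\tilde b(f)| \le |b|_{\mathscr A}\,\lip_a(f)$ established above (with $\tilde b$ the unique extension, which equals $b$): since $|b|_{\mathscr A} \in L^\infty(\mm)^+$ witnesses Definition \ref{def:der} i) for $b$ on all of $\LIP_b(\X)$, minimality of $|b|$ gives $|b| \le |b|_{\mathscr A}$.

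**The main obstacle** I expect is the delicate step showing $|\tilde b(f)| \le |b|_{\mathscr A}\,\lip_a(f)$ $\mm$-a.e.\ — the naive weak lower semicontinuity only yields a bound involving a weak limit of $\lip_a(f_n)$, which a priori could exceed $\lip_a(f)$ on a small set even though $(\lip_a(f_n) - \Lip(f))^+ \to 0$ globally. Property iv) of Proposition \ref{prop:approx_with_A} gives global control by $\Lip(f)$, not the pointwise $\lip_a(f)$; to get the sharp pointwise bound one must run the approximation locally, i.e.\ for a fixed small ball $B$ apply the proposition to a modification of $f$ whose global Lipschitz constant is close to $\Lip(f; B)$, or else invoke the locality of derivations (a derivation is local in the sense $\1_{\{f = c\}} b(f) = 0$) to reduce to comparing with affine-like functions. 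This localisation, combined with a covering argument and the upper semicontinuity of $\lip_a(f)$, is the technical heart of the proof; everything else is a careful but routine limiting argument.
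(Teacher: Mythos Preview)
Your overall strategy matches the paper's: show the restriction is well-defined with $|\phi_{\mathscr A}(b)|_{\mathscr A}\le|b|$ and the right divergence, then construct an inverse by extending any $b\in\Der^\infty_\infty(\X;\mathscr A)$ to $\LIP_b(\X)$ via the approximation Proposition~\ref{prop:approx_with_A}, and finally upgrade the global bound $|\bar b(f)|\le\Lip(f)|b|_{\mathscr A}$ to the pointwise one $|\bar b(f)|\le|b|_{\mathscr A}\lip_a(f)$.

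There is, however, a genuine gap in your construction of the extension. You claim $\tilde b(f)\coloneqq L^1(\mm)\text{-}\lim_n b(f_n)$ exists as a \emph{strong} limit, justified by $|b(f_n)-b(f_m)|\le|b|_{\mathscr A}\lip_a(f_n-f_m)$. But Proposition~\ref{prop:approx_with_A} gives no control on $\lip_a(f_n-f_m)$ (only domination of $\lip_a(f_n)$ and $(\lip_a(f_n)-\Lip(f))^+\to 0$), so there is no reason for $(b(f_n))_n$ to be Cauchy in $L^1(\mm)$. The paper proceeds differently: since $(b(f_n))_n$ is dominated, Dunford--Pettis gives a \emph{weakly} convergent subsequence, and the limit is identified (and shown to be independent of the sequence) by testing against $g\in\mathscr A$ and using the divergence, via
\[
\int g\,b(f_n)\,\d\mm=\int b(gf_n)\,\d\mm-\int f_n\,b(g)\,\d\mm=-\int f_n\big(g\,\div(b)+b(g)\big)\,\d\mm\to-\int f\big(g\,\div(b)+b(g)\big)\,\d\mm.
\]
This same computation is what makes your injectivity and Leibniz-rule arguments work, but only in the weak sense; you should replace the strong limit throughout by this weak-limit-plus-divergence identification.

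For the hard inequality $|\bar b(f)|\le|b|_{\mathscr A}\lip_a(f)$, your localisation idea (McShane-extend $f|_{B_r(x_0)}$ to $\hat f$ with $\Lip(\hat f)=\Lip(f;B_r(x_0))$, then use Leibniz with a cutoff to get $\bar b(f)=\bar b(\hat f)$ a.e.\ on a smaller ball) is a legitimate route and is essentially what underlies the result the paper invokes. The paper packages this step differently: it first shows $\bar b$ is weakly$^*$ continuous along equi-bounded equi-Lipschitz pointwise-convergent sequences, and then cites (an adaptation of) \cite[Corollary~4.14]{Amb:Iko:Luc:Pas:24}, which turns the global bound $|\bar b(f)|\le\Lip(f)|b|_{\mathscr A}$ into the local one. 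Either approach works; yours is more self-contained, the paper's is cleaner to state once the external lemma is available.
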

\begin{proof}
Let us write \({\rm L}_\X\coloneqq\LIP_b(\X)\)
for brevity. First of all, notice that \(\phi_{\mathscr A}(b)\in\Der^\infty_\infty(\X;\mathscr A)\) and
\(|\phi_{\mathscr A}(b)|_{\mathscr A}\leq|b|\) for every \(b\in\Der^\infty_\infty(\X;{\rm L}_\X)\).
Moreover, the operator \(\phi_{\mathscr A}\) is a homomorphism of \(\mathscr A\)-modules by construction.

To conclude, it remains to show that if \(b\in\Der^\infty_\infty(\X;\mathscr A)\) is given, there exists
\(\bar b\in\Der^\infty_\infty(\X;{\rm L}_\X)\) such that \(\phi_{\mathscr A}(\bar b)=b\) and \(|\bar b|\leq|b|_{\mathscr A}\).
To prove it, fix a function \(f\in{\rm L}_\X\) and take a sequence \((f_n)_n\subseteq\mathscr A\) that approximates
\(f\) in the sense of Proposition \ref{prop:approx_with_A}. Since \(|b(f_n)|\leq|b|_{\mathscr A}\lip_a(f_n)\)
for every \(n\in\N\), we have that \((b(f_n))_n\) is dominated in $L^1(\mm)$, thus the Dunford--Pettis theorem ensures that
we can extract a subsequence \((f_{n_k})_k\) such that \(b(f_{n_k})\rightharpoonup\bar b(f)\) in $L^1(\mm)$, for some limit
function \(\bar b(f)\in L^1(\mm)\). Note that whenever a sequence \((\tilde f_i)_i\subseteq\mathscr A\) and a function
$h \in L^1(\mm)$ satisfy \(\tilde f_i\rightharpoonup f\) and \(b(\tilde f_i)\rightharpoonup h\)
weakly in \(L^1(\mm)\), for any \(g\in\mathscr A\) we have that
\begin{equation}\label{eq:ext_der_aux1}\begin{split}
\int hg\,\d\mm&=\lim_{i\to\infty}\int g\,b(\tilde f_i)\,\d\mm=\lim_{i\to\infty}\int b(g\tilde f_i)-\tilde f_i\,b(g)\,\d\mm\\
&=-\lim_{i\to\infty}\int\tilde f_i(g\,\div(b)+b(g))\,\d\mm=-\int f(g\,\div(b)+b(g))\,\d\mm.
\end{split}\end{equation}
This shows that \(\bar b(f)\) is independent of the chosen subsequence \((f_{n_k})_k\) and that the following holds:
if \((\tilde f_i)_i\subseteq\mathscr A\) satisfies \(\tilde f_i\rightharpoonup f\) weakly in \(L^1(\mm)\) and
\((b(\tilde f_i))_i\) is dominated in $L^1(\mm)$, then \(b(\tilde f_i)\rightharpoonup\bar b(f)\) weakly in $L^1(\mm)$.
In particular, we have that \(\bar b|_{\mathscr A}=b\). Let us now check that \(\bar b\in{\rm Der}^\infty_\infty(\X;{\rm L}_\X)\):
\begin{itemize}
\item Fix \(f,g\in{\rm L}_\X\) and \(\alpha,\beta\in\R\). Let \((f_n)_n\subseteq\mathscr A\)
(resp.\ \((g_n)_n\subseteq\mathscr A\)) be an approximating sequence for \(f\) (resp.\ \(g\))
as in Proposition \ref{prop:approx_with_A}. Letting \(n\to\infty\) in the identity
\(b(\alpha f_n+\beta g_n)=\alpha\,b(f_n)+\beta\,b(g_n)\), we obtain that
\(\bar b(\alpha f+\beta g)=\alpha\,\bar b(f)+\beta\,\bar b(g)\), proving that \(\bar b\) is linear.
\item Note that whenever $f \in {\rm L}_\X$ and $(f_n)_n \subseteq \mathscr A$ is an approximating sequence for $f$ as in Proposition \ref{prop:approx_with_A}, we have that
\begin{equation}\label{eq:ext_der_aux2}
\int\bar b(f)\,\d\mm=\lim_{n\to\infty}\int b(f_n)\,\d\mm=-\lim_{n\to\infty}\int f_n\,\div(b)\,\d\mm=-\int f\,\div(b)\,\d\mm.
\end{equation}
\item We claim that
\begin{equation}\label{eq:ext_der_aux3}
\bar b(fg)=f\,b(g)+g\,\bar b(f)\quad\text{ for every }f\in{\rm L}_\X\text{ and }g\in\mathscr A.
\end{equation}
Indeed, letting \((f_n)_n\subseteq\mathscr A\) be an approximating sequence for \(f\) as in Proposition \ref{prop:approx_with_A},
we have that \(b(f_n g)=f_n \,b(g)+g\,b(f_n)\) for every \(n\in\N\), which gives \(\bar b(fg)=f\,b(g)+g\,\bar b(f)\).
\item Next, we claim that \(\bar b\) has the following property: if \(f\in{\rm L}_\X\) and \((\tilde f_i)_i\subseteq{\rm L}_\X\)
are such that \(\tilde f_i\rightharpoonup f\) weakly in \(L^1(\mm)\) and \((\bar b(\tilde f_i))_i\) is dominated in $L^1(\mm)$,
then \(\bar b(\tilde f_i)\rightharpoonup\bar b(f)\) weakly in $L^1(\mm)$.
To prove it, one can argue as in \eqref{eq:ext_der_aux1}, by using \eqref{eq:ext_der_aux2} and \eqref{eq:ext_der_aux3}.
\item Consequently, we can show that \(\bar b\) satisfies the Leibniz rule, i.e.
\begin{equation}\label{eq:ext_der_aux4}
\bar b(fg)=f\,\bar b(g)+g\,\bar b(f)\quad\text{ for every }f,g\in{\rm L}_\X.
\end{equation}
Indeed, letting \((g_n)_n\subseteq\mathscr A\) be an approximating sequence for \(g\) as in Proposition \ref{prop:approx_with_A},
we know from \eqref{eq:ext_der_aux3} that \(\bar b(fg_n)=f\,b(g_n)+g_n\bar b(f)\) for every \(n\in\N\), whence
\eqref{eq:ext_der_aux4} follows.
\item We also claim that
\begin{equation}\label{eq:ext_der_aux5}
|\bar b(f)|\leq\Lip(f)|b|_{\mathscr A}\quad\text{ for every }f\in{\rm L}_\X.
\end{equation}
To prove it, take an approximating sequence \((f_n)_n\subseteq\mathscr A\) for \(f\) as in Proposition \ref{prop:approx_with_A}
and set $h_n\coloneqq(\lip_a(f_n)-\Lip(f))^+$. Letting \(n\to\infty\) in
\(\bar b(f_n)\leq|b|_{\mathscr A}\lip_a(f_n)\leq|b|_{\mathscr A}(\Lip(f)+h_n)\), we get \(\bar b(f)\leq\Lip(f)|b|_{\mathscr A}\).
Doing the same for \(-f\) instead, we obtain that \(-\bar b(f)\leq\Lip(f)|b|_{\mathscr A}\), proving \eqref{eq:ext_der_aux5}.
\item Fix an equi-bounded, equi-Lipschitz sequence \((f_n)_n\subseteq{\rm L}_\X\) converging pointwise to \(f\in{\rm L}_\X\).
Since \(|\bar b(f_n)|\leq\Lip(f_n)|b|_{\mathscr A}\) by \eqref{eq:ext_der_aux5}, the sequence \((\bar b(f_n))_n\) is bounded
in \(L^\infty(\mm)\), thus it is dominated in $L^1(\mm)$. Given that \(f_n\to f\) in \(L^1(\mm)\) (by the dominated convergence
theorem), we have that \(\bar b(f_n)\rightharpoonup\bar b(f)\) weakly in $L^1(\mm)$ by the fourth point above.
Since \(L^1(\mm)\) is separable, by the Banach--Alaoglu theorem we can extract a subsequence \((f_{n_k})_k\) such that
\((\bar b(f_{n_k}))_k\) has a weak\(^*\) limit \(G\) in \(L^\infty(\mm)\). In particular, \(\bar b(f_{n_k})\rightharpoonup G\)
weakly in $L^1(\mm)$, thus \(G=\bar b(f)\) and accordingly the original sequence \(\bar b(f_n)\) weakly\(^*\) converges to
\(\bar b(f)\) in \(L^\infty(\mm)\). This proves that \(\bar b\) is weakly\(^*\) continuous (in the sense of
\cite[Lemma 4.12 i)]{Amb:Iko:Luc:Pas:24}). By suitably adapting the proof of \cite[Corollary 4.14]{Amb:Iko:Luc:Pas:24}
(exploiting the fact that \({\rm spt}(\mm)\) is separable), we can conclude that \(|\bar b(f)|\leq|b|_{\mathscr A}\lip_a(f)\)
for every \(f\in{\rm L}_\X\).
\end{itemize}
All in all, we showed that \(\bar b\in\Der^\infty_\infty(\X;{\rm L}_\X)\), \(\phi_{\mathscr A}(\bar b)=b\)
and \(|\bar b|\leq|b|_{\mathscr A}\). The proof is complete.
\end{proof}
\begin{corollary}\label{cor:BV_W_indep_A}
Let \((\X,\sfd,\mm)\) be a metric measure space. Let \(\mathscr A\) be a good subalgebra of \(\LIP_b(\X)\).
Then \({\rm BV}_{\rm W}(\X;\LIP_b(\X)) \subseteq {\rm BV}_{\rm W}(\X;\mathscr A)\) and
\(|{\bf D}f|_{\mathscr A} \le|{\bf D}f|_{\LIP_b(\X)}\) for all \(f\in{\rm BV}_{\rm W}(\X;\LIP_b(\X))\).
\end{corollary}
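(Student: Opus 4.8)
The plan is to construct, for each $f\in{\rm BV}_{\rm W}(\X;\LIP_b(\X))$, the operator required by Definition \ref{def:bvw} by transporting ${\bf L}_f$ through the restriction isomorphism $\phi_{\mathscr A}$ of Theorem \ref{thm:indep_der}. Concretely, I would set ${\bf L}^{\mathscr A}_f\coloneqq{\bf L}_f\circ\phi_{\mathscr A}^{-1}\colon\Der^\infty_\infty(\X;\mathscr A)\to\mathcal M(\X)$, where ${\bf L}_f$ is the operator witnessing $f\in{\rm BV}_{\rm W}(\X;\LIP_b(\X))$, unique by Remark \ref{rmk:L_f_unique}, and $\phi_{\mathscr A}^{-1}$ is well defined because Theorem \ref{thm:indep_der} asserts that $\phi_{\mathscr A}$ is a bijection from $\Der^\infty_\infty(\X;\LIP_b(\X))$ onto $\Der^\infty_\infty(\X;\mathscr A)$. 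Since that theorem also gives $|\phi_{\mathscr A}(b)|_{\mathscr A}=|b|$, the map $\phi_{\mathscr A}$ is an isometry for the module norms, hence so is $\phi_{\mathscr A}^{-1}$, and therefore ${\bf L}^{\mathscr A}_f$ is a bounded linear operator.

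Next I would verify the three defining conditions for ${\bf L}^{\mathscr A}_f$. Fix $b'\in\Der^\infty_\infty(\X;\mathscr A)$ and write $b'=\phi_{\mathscr A}(b)$ for the unique $b\in\Der^\infty_\infty(\X;\LIP_b(\X))$. For \eqref{eq:def_BV_der_1}, I would use the identity $\div(\phi_{\mathscr A}(b))=\div(b)$ from Theorem \ref{thm:indep_der} to get ${\bf L}^{\mathscr A}_f(b')(\X)={\bf L}_f(b)(\X)=-\int f\,\div(b)\,\d\mm=-\int f\,\div(b')\,\d\mm$. For \eqref{eq:def_BV_der_2}, given $h\in\mathscr A$, I would recall that $\Der^\infty_\infty(\X;\LIP_b(\X))$ is a module over $\LIP_b(\X)$ (so that $hb\in\Der^\infty_\infty(\X;\LIP_b(\X))$) and that $\phi_{\mathscr A}$ is an $\mathscr A$-module homomorphism (so that $hb'=h\,\phi_{\mathscr A}(b)=\phi_{\mathscr A}(hb)$); hence ${\bf L}^{\mathscr A}_f(hb')={\bf L}_f(hb)=h\,{\bf L}_f(b)=h\,{\bf L}^{\mathscr A}_f(b')$. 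Finally, \eqref{eq:def_BV_der_3} is immediate because ${\bf L}^{\mathscr A}_f(b')={\bf L}_f(b)$ as signed measures and ${\bf L}_f$ satisfies \eqref{eq:def_BV_der_3}. This yields $f\in{\rm BV}_{\rm W}(\X;\mathscr A)$.

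For the comparison of total variation measures, I would observe that $\phi_{\mathscr A}$ restricts to a bijection between $\{b\in\Der^\infty_\infty(\X;\LIP_b(\X)):|b|\leq 1\}$ and $\{b'\in\Der^\infty_\infty(\X;\mathscr A):|b'|_{\mathscr A}\leq 1\}$, and that ${\bf L}^{\mathscr A}_f(\phi_{\mathscr A}(b))={\bf L}_f(b)$. Consequently, the family of measures whose supremum defines $|{\bf D}f|_{\mathscr A}$ in Proposition \ref{prop:equiv_|Df|} coincides with the one defining $|{\bf D}f|_{\LIP_b(\X)}$, so in fact $|{\bf D}f|_{\mathscr A}=|{\bf D}f|_{\LIP_b(\X)}$, which in particular gives the claimed inequality. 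The entire argument is essentially bookkeeping on top of Theorem \ref{thm:indep_der} (which already contains all the analytic content, namely the surjectivity of $\phi_{\mathscr A}$ obtained by extending derivations via Proposition \ref{prop:approx_with_A}); I do not expect any real obstacle, the only point requiring a little care being to invoke the $\LIP_b(\X)$-module, hence $\mathscr A$-module, structure of $\Der^\infty_\infty(\X;\LIP_b(\X))$ when checking \eqref{eq:def_BV_der_2}.
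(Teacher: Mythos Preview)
Your proposal is correct and follows exactly the paper's approach: define $\tilde{\bf L}_f\coloneqq{\bf L}_f\circ\phi_{\mathscr A}^{-1}$ and read off the three axioms and the total-variation comparison from the properties of $\phi_{\mathscr A}$ in Theorem~\ref{thm:indep_der}. You supply more detail than the paper (which just says ``easy to check''), and your observation that the unit balls are in bijection so that in fact $|{\bf D}f|_{\mathscr A}=|{\bf D}f|_{\LIP_b(\X)}$ is correct and slightly sharper than what the corollary states.
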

\begin{proof}
Let $f \in {\rm BV}_{\rm W}(\X;\LIP_b(\X))$ and let \({\bf L}_f\colon\Der^\infty_\infty(\X;\LIP_b(\X))\to\mathcal M(\X)\)
be as in Definition \ref{def:bvw} for $f$ and the algebra $\LIP_b(\X)$. We define the operator
\(\tilde{\bf L}_f\colon\Der^\infty_\infty(\X; \mathscr A) \to\mathcal M(\X)\) as
\[
\tilde{\bf L}_f(b):= {\bf L}_f(\phi_{\mathscr A}^{-1}(b))\quad \text{ for every }b \in \Der^\infty_\infty(\X; \mathscr A),
\]
where $\phi_{\mathscr A}$ is as in Theorem \ref{thm:indep_der}. It is easy to check that $\tilde{\bf L}_f$ satisfies
Definition \ref{def:bvw} for $f$ and the algebra $\mathscr A$. The inequality between the total variation measures
follows immediately from the properties of $\phi_{\mathscr A}$.
\end{proof}

Let us now fix a unital separating subalgebra \(\mathscr A\) of \(\LIP_\star(\X)\).
We can regard \(\d_{\mathscr A}\) as an unbounded linear operator \(\d_{\mathscr A}\colon L^1(\mm)\to L^1(T^*\X;\mathscr A)\)
whose domain is \(D(\d_{\mathscr A})=\mathscr A\). Given that \(\mathscr A\) is dense in \(L^1(\mm)\), we have that
\(\d_{\mathscr A}\) is densely defined, thus accordingly its adjoint operator
\[
\d_{\mathscr A}^*\colon L^1(T^*\X;\mathscr A)'\to L^\infty(\mm)
\]
is well defined. Recall that the domain \(D(\d_{\mathscr A}^*)\) of \(\d_{\mathscr A}^*\) is a vector subspace
of \(L^1(T^*\X;\mathscr A)'\), and that \(\d_{\mathscr A}^*\) is characterised by \(\int f\,\d_{\mathscr A}^*V\,\d\mm=V(\d_{\mathscr A}f)\)
for every \(V\in D(\d_{\mathscr A}^*)\) and \(f\in\mathscr A\).
\begin{lemma}\label{lem:D(d*)_to_Der}
Let \((\X,\sfd,\mm)\) be a metric measure space. Let \(\mathscr A\) be a unital separating subalgebra of \(\LIP_\star(\X)\).
Fix any \(v\in L^\infty(T\X;\mathscr A)\) with \(\textsc{Int}_{\mathscr A}(v)\in D(\d_{\mathscr A}^*)\).
Let us define \(b_v\colon\mathscr A\to L^1(\mm)\) as 
\[
b_v(f)\coloneqq\d_{\mathscr A}f(v)\in L^1(\mm)\quad\text{ for every }f\in\mathscr A.
\]
Then \(b_v\in\Der^\infty_\infty(\X;\mathscr A)\). Moreover, it holds that \(|b_v|_{\mathscr A}\leq|v|\) and \(\div(b_v)=-\d_{\mathscr A}^*(\textsc{Int}_{\mathscr A}(v))\).
\end{lemma}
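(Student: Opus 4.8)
The plan is to check the two requirements in Definition \ref{def:der} for $b_v$ and then identify its divergence; everything reduces to unwinding the definitions of the $\mathscr A$-cotangent module, of its module dual $L^\infty(T\X;\mathscr A)$, of the map $\textsc{Int}_{\mathscr A}$ and of the adjoint $\d_{\mathscr A}^*$. First I would dispose of the algebraic part. Linearity of $b_v$ is immediate, since $\d_{\mathscr A}\colon\mathscr A\to L^1(T^*\X;\mathscr A)$ is linear and the pointwise pairing $\omega\mapsto\omega(v)$ is additive. For the Leibniz rule I would apply \eqref{eq:Leibniz_d_A} to write $\d_{\mathscr A}(fg)=f\,\d_{\mathscr A}g+g\,\d_{\mathscr A}f$ and then pair against $v$: since $f,g\in\mathscr A\subseteq\rmC_b(\X)$ determine elements of $L^\infty(\mm)$, and the module dual $L^\infty(T\X;\mathscr A)=L^1(T^*\X;\mathscr A)^*$ comes precisely with the property $(h\omega)(v)=h\,\omega(v)$ for $h\in L^\infty(\mm)$, this yields $b_v(fg)=f\,b_v(g)+g\,b_v(f)$.

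For the pointwise bound I would invoke the defining inequality \eqref{eq:def_dual_mod} of the module dual, namely $|\omega(v)|\leq|v|\,|\omega|$ $\mm$-a.e.\ for every $\omega\in L^1(T^*\X;\mathscr A)$. Taking $\omega=\d_{\mathscr A}f$ and recalling that $|\d_{\mathscr A}f|=|{\rm D}f|_{*,\mathscr A}\leq\lip_a(f)$ by \eqref{eq:LIP_in_H11}, I obtain $|b_v(f)|\leq|v|\,\lip_a(f)$ $\mm$-a.e.\ for every $f\in\mathscr A$, with $|v|\in L^\infty(\mm)^+$. Hence $b_v\in\Der^\infty(\X;\mathscr A)$, and since $|v|$ is an admissible choice of $G$ in Definition \ref{def:der} i), the $\mm$-a.e.\ minimality of $|b_v|_{\mathscr A}$ gives $|b_v|_{\mathscr A}\leq|v|$.

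It remains to compute the divergence, which is where the hypothesis $\textsc{Int}_{\mathscr A}(v)\in D(\d_{\mathscr A}^*)$ enters. For any $f\in\mathscr A$ I would chain the definition of $\textsc{Int}_{\mathscr A}$ with the characterisation $\int f\,\d_{\mathscr A}^*V\,\d\mm=V(\d_{\mathscr A}f)$ of the adjoint:
\[
\int b_v(f)\,\d\mm=\int\d_{\mathscr A}f(v)\,\d\mm=\textsc{Int}_{\mathscr A}(v)(\d_{\mathscr A}f)=\int f\,\d_{\mathscr A}^*(\textsc{Int}_{\mathscr A}(v))\,\d\mm.
\]
Since $\d_{\mathscr A}^*$ takes values in $L^\infty(\mm)$, the function $-\d_{\mathscr A}^*(\textsc{Int}_{\mathscr A}(v))$ belongs to $L^\infty(\mm)$, and the displayed identity says exactly, via Definition \ref{def:div}, that it is the divergence of $b_v$. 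Therefore $b_v\in\Der^\infty_\infty(\X;\mathscr A)$ with $\div(b_v)=-\d_{\mathscr A}^*(\textsc{Int}_{\mathscr A}(v))$.

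I do not expect a genuine obstacle here: the argument is essentially a bookkeeping of definitions. The only point deserving care is keeping the two duality pairings straight — the $L^\infty(\mm)$-bilinear module pairing $\omega(v)$ on $L^1(T^*\X;\mathscr A)\times L^\infty(T\X;\mathscr A)$ versus the Banach-space pairing mediated by the isometric isomorphism $\textsc{Int}_{\mathscr A}$ of \eqref{eq:Int} — and noting that it is precisely the $L^\infty(\mm)$-linearity of the module pairing that licenses the passage from the Leibniz rule for $\d_{\mathscr A}$ to the Leibniz rule for $b_v$.
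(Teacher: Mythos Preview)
Your proposal is correct and follows essentially the same approach as the paper: linearity from the linearity of $\d_{\mathscr A}$, the Leibniz rule from \eqref{eq:Leibniz_d_A} and the $L^\infty(\mm)$-linearity of the module pairing, the pointwise bound $|b_v(f)|\leq|v|\lip_a(f)$ from \eqref{eq:def_dual_mod} and \eqref{eq:LIP_in_H11}, and the divergence identity via the chain $\int b_v(f)\,\d\mm=\textsc{Int}_{\mathscr A}(v)(\d_{\mathscr A}f)=\int f\,\d_{\mathscr A}^*(\textsc{Int}_{\mathscr A}(v))\,\d\mm$. The only difference is the order of presentation; the paper verifies the pointwise bound first and the Leibniz rule second, but the substance is identical.
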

\begin{proof}
First, observe that \(\d_{\mathscr A}f(v)\leq|v||\d_{\mathscr A}f|\leq|v|\lip_a(f)\) for every \(f\in\mathscr A\), so that
\(b_v(f)\in L^1(\mm)\) and \(b_v\) satisfies Definition \ref{def:der} i) with \(G=|v|\). The linearity of \(b_v\) follows from the
linearity of \(\d_{\mathscr A}\). For any \(f,g\in\mathscr A\), we deduce from \eqref{eq:Leibniz_d_A} that
\[
b_v(fg)=\d_{\mathscr A}(fg)(v)=f\,\d_{\mathscr A}g(v)+g\,\d_{\mathscr A}f(v)=f\,b_v(g)+g\,b_v(f),
\]
which proves that \(b_v\) satisfies Definition \ref{def:der} ii), thus accordingly \(b_v\in\Der^\infty(\X;\mathscr A)\) and \(|b_v|_{\mathscr A}\leq|v|\).
Finally, recalling the definitions of \(\d_{\mathscr A}^*\) and \(\textsc{Int}_{\mathscr A}\), for any \(f\in\mathscr A\) we can compute
\[
\int b_v(f)\,\d\mm=\int\d_{\mathscr A}f(v)\,\d\mm=\textsc{Int}_{\mathscr A}(v)(\d_{\mathscr A}f)=\int f\,\d_{\mathscr A}^*(\textsc{Int}_{\mathscr A}(v))\,\d\mm,
\]
which gives that \(b_v\in\Der^\infty_\infty(\X;\mathscr A)\) and \(\div(b_v)=-\d_{\mathscr A}^*(\textsc{Int}_{\mathscr A}(v))\). The proof is thus complete.
\end{proof}

We now define the \textbf{pre-Cheeger \(\mathscr A\)-energy} functional \(\tilde{\mathcal E}_{\mathscr A}\colon L^1(\mm)\to[0,+\infty]\) as
\[
\tilde{\mathcal E}_{\mathscr A}(f)\coloneqq\left\{\begin{array}{ll}
\int|{\rm D}f|_{*,\mathscr A}\,\d\mm\\
+\infty
\end{array}\quad\begin{array}{ll}
\text{ if }f\in\mathscr A,\\
\text{ otherwise.}
\end{array}\right.
\]
It can happen that \(\tilde{\mathcal E}_{\mathscr A}|_{\mathscr A}\neq\mathcal E_{\mathscr A}|_{\mathscr A}\), but \(\mathcal E_{\mathscr A}\) is the
\(L^1(\mm)\)-lower semicontinuous envelope of \(\tilde{\mathcal E}_{\mathscr A}\) (thanks to \eqref{eq:LIP_in_H11} and to Remark \ref{rmk:BV_vs_H11}).
Notice also that the functional \(\tilde{\mathcal E}_{\mathscr A}\) can be expressed as
\begin{equation}\label{eq:tilde_E_as_compos}
\tilde{\mathcal E}_{\mathscr A}=\|\cdot\|_{L^1(T^*\X;\mathscr A)}\circ\d_{\mathscr A}.
\end{equation}
We denote by \({\sf n}_{\mathscr A}^*\) the Fenchel conjugate of \({\sf n}_{\mathscr A}\coloneqq\|\cdot\|_{L^1(T^*\X;\mathscr A)}\colon L^1(T^*\X;\mathscr A)\to[0,+\infty)\), i.e.
\[
{\sf n}_{\mathscr A}^*(V)\coloneqq\sup_{\omega\in L^1(T^*\X;\mathscr A)}\big(V(\omega)-\|\omega\|_{L^1(T^*\X;\mathscr A)}\big)\quad\text{ for every }V\in L^1(T^*\X;\mathscr A)'.
\]
Straightforward computations give that
\begin{equation}\label{eq:conj_of_n_A}
{\sf n}_{\mathscr A}^*(V)=\left\{\begin{array}{ll}
0\\
+\infty
\end{array}\quad\begin{array}{ll}
\text{ for every }V\in L^1(T^*\X;\mathscr A)'\text{ with }\|V\|_{L^1(T^*\X;\mathscr A)'}\leq 1,\\
\text{ for every }V\in L^1(T^*\X;\mathscr A)'\text{ with }\|V\|_{L^1(T^*\X;\mathscr A)'}>1.
\end{array}\right.
\end{equation}
\begin{theorem}\label{thm:BV_W_in_BV_H}
Let \((\X,\sfd,\mm)\) be a metric measure space. Let \(\mathscr A\) be a unital separating subalgebra of \(\LIP_\star(\X)\).
Then it holds that \({\rm BV}_{\rm W}(\X;\mathscr A)\subseteq{\rm BV}_{\rm H}(\X;\mathscr A)\) and
\[
\|{\bf D}f\|_{*,\mathscr A}\leq|{\bf D}f|_{\mathscr A}(\X)\quad\text{ for every }f\in{\rm BV}_{\rm W}(\X;\mathscr A).
\]
\end{theorem}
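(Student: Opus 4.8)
The plan is to deduce both assertions from a Fenchel-duality argument that converts the defining property of \({\rm BV}_{\rm W}(\X;\mathscr A)\) into an energy bound, in the spirit of \cite[Theorem 5.4]{Pas:Tai:25}. The starting point is the representation \eqref{eq:tilde_E_as_compos} of the pre-Cheeger energy as \(\tilde{\mathcal E}_{\mathscr A}={\sf n}_{\mathscr A}\circ\d_{\mathscr A}\), which exhibits \(\tilde{\mathcal E}_{\mathscr A}\) as a norm composed with the linear operator \(\d_{\mathscr A}\) of domain \(\mathscr A\) (extended by \(+\infty\) outside \(\mathscr A\)); in particular \(\tilde{\mathcal E}_{\mathscr A}\) is a proper convex functional on \(L^1(\mm)\). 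Since \(\mathcal E_{\mathscr A}\) is the \(L^1(\mm)\)-lower semicontinuous envelope of \(\tilde{\mathcal E}_{\mathscr A}\), the Fenchel--Moreau theorem gives \(\mathcal E_{\mathscr A}=\tilde{\mathcal E}_{\mathscr A}^{**}\), where conjugation is meant with respect to the dual pair \(\big(L^1(\mm),L^\infty(\mm)\big)\). Because \(\mathscr A\) is a linear subspace, the conjugate \(\tilde{\mathcal E}_{\mathscr A}^*\) takes only the values \(0\) and \(+\infty\), and it vanishes exactly on the closed convex set
\[
K\coloneqq\Big\{g\in L^\infty(\mm)\;\Big|\;\int hg\,\d\mm\leq\|\d_{\mathscr A}h\|_{L^1(T^*\X;\mathscr A)}\ \text{ for every }h\in\mathscr A\Big\}.
\]
Consequently \(\mathcal E_{\mathscr A}(f)=\sup_{g\in K}\int fg\,\d\mm\) for every \(f\in L^1(\mm)\), with the convention (coming from \eqref{eq:bvh_tv}) that this supremum is finite precisely when \(f\in{\rm BV}_{\rm H}(\X;\mathscr A)\), in which case it equals \(\|{\bf D}f\|_{*,\mathscr A}\).

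The second step is to describe the elements of \(K\) as divergences of unit derivations. Fix \(g\in K\). Since \(g\in K\) forces \(\int hg\,\d\mm=0\) whenever \(\d_{\mathscr A}h=0\) (apply the defining inequality to \(\pm h\)), the prescription \(\d_{\mathscr A}h\mapsto\int hg\,\d\mm\) is a well-defined linear functional on the subspace \(\d_{\mathscr A}(\mathscr A)\subseteq L^1(T^*\X;\mathscr A)\), and it is bounded in absolute value by \(\|\cdot\|_{L^1(T^*\X;\mathscr A)}\) (again using that \(\mathscr A\) is a subspace). Extending it to all of \(L^1(T^*\X;\mathscr A)\) by Hahn--Banach and exploiting the isometric isomorphism \(\textsc{Int}_{\mathscr A}\) of \eqref{eq:Int}, we obtain \(v\in L^\infty(T\X;\mathscr A)\) with \(|v|\leq 1\) \(\mm\)-a.e., such that \(\textsc{Int}_{\mathscr A}(v)\in D(\d_{\mathscr A}^*)\) and \(\d_{\mathscr A}^*(\textsc{Int}_{\mathscr A}(v))=g\). (Conversely, any such \(v\) produces an element of \(K\), because \(\d_{\mathscr A}h(v)\leq|v|\,|\d_{\mathscr A}h|\leq|\d_{\mathscr A}h|\) \(\mm\)-a.e.\ for every \(h\in\mathscr A\).) By Lemma \ref{lem:D(d*)_to_Der}, the map \(b_v(h)\coloneqq\d_{\mathscr A}h(v)\) defines a derivation \(b_v\in\Der^\infty_\infty(\X;\mathscr A)\) with \(|b_v|_{\mathscr A}\leq|v|\leq 1\) \(\mm\)-a.e.\ and \(\div(b_v)=-g\).

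We can now conclude. Let \(f\in{\rm BV}_{\rm W}(\X;\mathscr A)\) and \(g\in K\), and write \(g=-\div(b_v)\) with \(b_v\) as above. Using \eqref{eq:def_BV_der_1} and Proposition \ref{prop:equiv_|Df|}, and recalling that \(b_v\in\overline\Der^\infty_\infty(\X;\mathscr A)\) has norm at most \(1\),
\[
\int fg\,\d\mm=-\int f\,\div(b_v)\,\d\mm={\bf L}_f(b_v)(\X)\leq\|{\bf L}_f(b_v)\|_{\mathcal M(\X)}\leq{\rm V}_{\mathscr A}(f)=|{\bf D}f|_{\mathscr A}(\X).
\]
Taking the supremum over \(g\in K\) yields \(\mathcal E_{\mathscr A}(f)\leq|{\bf D}f|_{\mathscr A}(\X)<+\infty\); hence \(f\in{\rm BV}_{\rm H}(\X;\mathscr A)\) and \(\|{\bf D}f\|_{*,\mathscr A}=\mathcal E_{\mathscr A}(f)\leq|{\bf D}f|_{\mathscr A}(\X)\), which is the claim.

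The step I expect to require the most care is the identification of \(K\): one has to verify that the Hahn--Banach extension, once transported through \(\textsc{Int}_{\mathscr A}\), gives a vector field of pointwise norm at most \(1\) (this is exactly where the isometry in \eqref{eq:Int} enters) and that the corresponding functional actually lies in the domain of the adjoint \(\d_{\mathscr A}^*\), so that Lemma \ref{lem:D(d*)_to_Der} can be invoked. Everything else amounts to a routine application of the Fenchel--Moreau theorem and to elementary manipulations with the pointwise norm.
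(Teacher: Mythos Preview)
Your proof is correct and follows essentially the same Fenchel--duality strategy as the paper: both arguments pass through \(\mathcal E_{\mathscr A}=\tilde{\mathcal E}_{\mathscr A}^{**}\), compute the conjugate of \({\sf n}_{\mathscr A}\circ\d_{\mathscr A}\), and invoke Lemma~\ref{lem:D(d*)_to_Der} to convert elements of \(D(\d_{\mathscr A}^*)\) into derivations. The only difference is that the paper quotes \cite[Theorem~5.1]{BBS} to obtain \(({\sf n}_{\mathscr A}\circ\d_{\mathscr A})^*(g)=\inf\{{\sf n}_{\mathscr A}^*(V):V\in D(\d_{\mathscr A}^*),\,\d_{\mathscr A}^*V=g\}\), whereas you reprove that special case by hand via the Hahn--Banach extension; this makes your argument slightly more self-contained but otherwise identical in structure.
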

\begin{proof}
Fix any \(f\in{\rm BV}_{\rm W}(\X;\mathscr A)\). Since \(\mathcal E_{\mathscr A}\) is convex and \(L^1(\mm)\)-lower semicontinuous,
it coincides with the double Fenchel conjugate \(\tilde{\mathcal E}_{\mathscr A}^{**}\coloneqq(\tilde{\mathcal E}_{\mathscr A}^*)^*\)
of \(\tilde{\mathcal E}_{\mathscr A}\). Therefore, using \eqref{eq:tilde_E_as_compos}, \cite[Theorem 5.1]{BBS},
\eqref{eq:conj_of_n_A} and Lemma \ref{lem:D(d*)_to_Der}, we obtain that
\[\begin{split}
\mathcal E_{\mathscr A}(f)&=\tilde{\mathcal E}_{\mathscr A}^{**}(f)
=\sup_{g\in L^\infty(\mm)}\bigg(\int fg\,\d\mm-\tilde{\mathcal E}_{\mathscr A}^*(g)\bigg)
=\sup_{g\in L^\infty(\mm)}\bigg(\int fg\,\d\mm-({\sf n}_{\mathscr A}\circ\d_{\mathscr A})^*(g)\bigg)\\
&=\sup_{g\in L^\infty(\mm)}\bigg(\int fg\,\d\mm-\inf_{\substack{V\in D(\d_{\mathscr A}^*): \\ \d_{\mathscr A}^*V=g}}{\sf n}_{\mathscr A}^*(V)\bigg)
=\sup_{V\in D(\d_{\mathscr A}^*)}\bigg(\int f\,\d_{\mathscr A}^*V\,\d\mm-{\sf n}_{\mathscr A}^*(V)\bigg)\\
& = \sup_{\substack{v \in L^\infty(T\X;\mathscr A) :\\ \textsc{Int}_{\mathscr A}(v) \in D(\d_{\mathscr A}^*),\, \|v\|_{L^\infty(T\X;\mathscr A)} \le 1}} \bigg(\int f \, \d_{\mathscr A}^*(\textsc{Int}_{\mathscr A}(v))\,\d\mm \bigg) \\
& = \sup_{\substack{v \in L^\infty(T\X;\mathscr A) :\\ \textsc{Int}_{\mathscr A}(v) \in D(\d_{\mathscr A}^*),\, \|v\|_{L^\infty(T\X;\mathscr A)} \le 1}} \bigg(-\int f \, \div(b_v)\,\d\mm\bigg)  \\
&\leq\sup_{\substack{b\in{\rm Der}^\infty_\infty(\X;\mathscr A): \\ |b|_{\mathscr A}\leq 1}}\bigg(-\int f\,\div(b)\,\d\mm\bigg)
=\sup_{\substack{b\in{\rm Der}^\infty_\infty(\X;\mathscr A): \\ |b|_{\mathscr A}\leq 1}}{\bf L}_f(b)(\X)=|{\bf D}f|_{\mathscr A}(\X).
\end{split}\]
This gives that \(f\in{\rm BV}_{\rm H}(\X;\mathscr A)\) and \(\|{\bf D}f\|_{*,\mathscr A}\leq|{\bf D}f|_{\mathscr A}(\X)\),
thus proving the statement.
\end{proof}
\begin{theorem}\label{thm:equivalence_Lip}
Let \((\X,\sfd,\mm)\) be a metric measure space. Assume that \((\X,\sfd)\) is either complete or a Radon space. Then it holds that
\[
{\rm BV}_{\rm H}(\X)={\rm BV}_{\rm W}(\X)
\]
and we have that \(|{\bf D}f|=|{\bf D}f|_*\) for every \(f\in{\rm BV}_{\rm H}(\X)\).
\end{theorem}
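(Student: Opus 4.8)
The plan is to deduce the statement by combining the two one-sided inclusions that have already been established in the paper, and then to upgrade the resulting numerical identity of total variations into a genuine identity of measures. No new hard analysis is needed: the substance is already contained in Lemma~\ref{lem:BV_H_loc_in_BV_W} and Theorem~\ref{thm:BV_W_in_BV_H}.

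First I would record the inclusion \({\rm BV}_{\rm W}(\X)\subseteq{\rm BV}_{\rm H}(\X)\), together with the inequality \(\|{\bf D}f\|_*\leq|{\bf D}f|(\X)\) for every \(f\in{\rm BV}_{\rm W}(\X)\): this is precisely Theorem~\ref{thm:BV_W_in_BV_H} applied in the distinguished case \(\mathscr A=\LIP_\star(\X)\), where the notation specializes to \({\rm BV}_{\rm W}(\X)\), \(|{\bf D}f|\), \({\rm BV}_{\rm H}(\X)\) and \(\|{\bf D}f\|_*\). Note that this direction requires no completeness or Radon assumption on \((\X,\sfd)\). Conversely, the inclusion \({\rm BV}_{\rm H}(\X)\subseteq{\rm BV}_{\rm W}(\X)\) together with the inequality of measures \(|{\bf D}f|\leq|{\bf D}f|_*\) for every \(f\in{\rm BV}_{\rm H}(\X)\) is exactly Lemma~\ref{lem:BV_H_loc_in_BV_W}, whose hypotheses (completeness or Radon-ness of \((\X,\sfd)\)) coincide with the ones assumed here. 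Combining the two inclusions already yields \({\rm BV}_{\rm H}(\X)={\rm BV}_{\rm W}(\X)\) as subspaces of \(L^1(\mm)\).

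It then remains to prove \(|{\bf D}f|=|{\bf D}f|_*\) for every \(f\) in this common space. Evaluating \(|{\bf D}f|\leq|{\bf D}f|_*\) on the whole of \(\X\), and combining it with \(\|{\bf D}f\|_*\leq|{\bf D}f|(\X)\) together with the identity \(|{\bf D}f|_*(\X)=\|{\bf D}f\|_*\) observed after Definition~\ref{def:BV_H_meas}, one obtains \(|{\bf D}f|_*(\X)\leq|{\bf D}f|(\X)\leq|{\bf D}f|_*(\X)\), hence \(|{\bf D}f|(\X)=|{\bf D}f|_*(\X)\). Finally I would invoke the elementary fact that if \(\mu\leq\nu\) are finite non-negative Borel measures with \(\mu(\X)=\nu(\X)\), then \(\mu=\nu\): indeed, for any Borel set \(E\subseteq\X\) the quantities \(\nu(E)-\mu(E)\) and \(\nu(\X\setminus E)-\mu(\X\setminus E)\) are both non-negative and sum to \(\nu(\X)-\mu(\X)=0\), so each of them vanishes. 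Applying this with \(\mu=|{\bf D}f|\) and \(\nu=|{\bf D}f|_*\) gives the asserted identity of measures.

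I do not expect a genuine obstacle in this particular argument, since all the difficulty has been absorbed into the preceding results: Lemma~\ref{lem:BV_H_loc_in_BV_W} rests on a tightness/Portmanteau argument in the spirit of Di Marino's work (and it is there that completeness or Radon-ness enters, through the fact that \(|{\bf D}f|_*\) is a Radon measure), while Theorem~\ref{thm:BV_W_in_BV_H} rests on the Fenchel-biconjugation computation. The one point worth emphasizing is that the final step genuinely uses the \emph{measure-level} inequality \(|{\bf D}f|\leq|{\bf D}f|_*\); knowing only the two numerical bounds \(\|{\bf D}f\|_*\leq|{\bf D}f|(\X)\) and \(|{\bf D}f|(\X)\leq\|{\bf D}f\|_*\) would establish equality of total masses but not of the measures themselves.
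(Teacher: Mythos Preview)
Your proposal is correct and follows precisely the paper's approach: the paper's proof is the one-line ``It follows from Lemma~\ref{lem:BV_H_loc_in_BV_W} and Theorem~\ref{thm:BV_W_in_BV_H}'', and you have spelled out exactly how these two results combine, including the standard observation that a pointwise inequality of finite measures with equal total masses forces equality.
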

\begin{proof}
It follows from Lemma \ref{lem:BV_H_loc_in_BV_W} and Theorem \ref{thm:BV_W_in_BV_H}.
\end{proof}
\begin{theorem}\label{thm:equivalence_A}
Let \((\X,\sfd,\mm)\) be a metric measure space. Assume that \((\X,\sfd)\) is complete. Let \(\mathscr A\) be a good subalgebra
of \(\LIP_b(\X)\). Then it holds that
\[
{\rm BV}_{\rm H}(\X;\mathscr A)={\rm BV}_{\rm H}(\X)={\rm BV}_{\rm W}(\X;\mathscr A)={\rm BV}_{\rm W}(\X;\LIP_b(\X)).
\]
Moreover, we have that \(\|{\bf D}f\|_{*,\mathscr A}=\|{\bf D}f\|_*=|{\bf D}f|_{\mathscr A}(\X)= |{\bf D}f|_{\LIP_b(\X)}(\X)\) and \(|{\bf D}f|_*=|{\bf D}f|_{\mathscr A}=|{\bf D}f|_{\LIP_b(\X)}\)
for every \(f\in{\rm BV}_{\rm H}(\X)\).
\end{theorem}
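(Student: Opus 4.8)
The plan is to deduce the statement by assembling the results already established, with no genuinely new argument: the entire substance has been front-loaded into Proposition~\ref{prop:approx_with_A}, Theorem~\ref{thm:indep_der} and Proposition~\ref{prop:approx_with_glob_Lip}. First I would observe that, since $\mm$ is finite, any bounded globally Lipschitz function $f$ satisfies $\lip_a(f)\le\Lip(f)\in L^1(\mm)$, so $\LIP_b(\X)$ is a unital separating subalgebra of $\LIP_\star(\X)$; hence all the objects above make sense along the chain $\mathscr A\subseteq\LIP_b(\X)\subseteq\LIP_\star(\X)$. The monotonicity~\eqref{eq:incl_BV_H} then yields the cheap inclusions
\[
{\rm BV}_{\rm H}(\X;\mathscr A)\subseteq{\rm BV}_{\rm H}(\X;\LIP_b(\X))\subseteq{\rm BV}_{\rm H}(\X)
\]
and, for every $f$ in the smallest of these spaces, $\|{\bf D}f\|_*\le\|{\bf D}f\|_{*,\LIP_b(\X)}\le\|{\bf D}f\|_{*,\mathscr A}$. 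Since $(\X,\sfd)$ is complete, Proposition~\ref{prop:approx_with_glob_Lip} gives ${\rm BV}_{\rm H}(\X;\LIP_b(\X))={\rm BV}_{\rm H}(\X)$ with $\|{\bf D}f\|_{*,\LIP_b(\X)}=\|{\bf D}f\|_*$, so altogether $\|{\bf D}f\|_*\le\|{\bf D}f\|_{*,\mathscr A}$ for every $f\in{\rm BV}_{\rm H}(\X;\mathscr A)$.

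For the reverse inclusion I would route through the BV spaces defined via derivations. Let $f\in{\rm BV}_{\rm H}(\X)$. Completeness and Remark~\ref{rmk:BV_H_loc_in_BV_W_bis} give $f\in{\rm BV}_{\rm W}(\X;\LIP_b(\X))$ with $|{\bf D}f|_{\LIP_b(\X)}\le|{\bf D}f|_*$. Corollary~\ref{cor:BV_W_indep_A} --- which uses that $\mathscr A$ is a good algebra through Theorem~\ref{thm:indep_der} and the approximation Proposition~\ref{prop:approx_with_A} --- then gives $f\in{\rm BV}_{\rm W}(\X;\mathscr A)$ with $|{\bf D}f|_{\mathscr A}\le|{\bf D}f|_{\LIP_b(\X)}$. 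Finally Theorem~\ref{thm:BV_W_in_BV_H} gives $f\in{\rm BV}_{\rm H}(\X;\mathscr A)$ with $\|{\bf D}f\|_{*,\mathscr A}\le|{\bf D}f|_{\mathscr A}(\X)$. Chaining these with the inequality from the first step produces the cycle
\[
\|{\bf D}f\|_*\le\|{\bf D}f\|_{*,\mathscr A}\le|{\bf D}f|_{\mathscr A}(\X)\le|{\bf D}f|_{\LIP_b(\X)}(\X)\le|{\bf D}f|_*(\X)=\|{\bf D}f\|_*,
\]
forcing every inequality to be an equality; in particular $\|{\bf D}f\|_{*,\mathscr A}=\|{\bf D}f\|_*=|{\bf D}f|_{\mathscr A}(\X)=|{\bf D}f|_{\LIP_b(\X)}(\X)$.

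The chain just produced shows ${\rm BV}_{\rm H}(\X)\subseteq{\rm BV}_{\rm H}(\X;\mathscr A)$, which together with the first step gives ${\rm BV}_{\rm H}(\X;\mathscr A)={\rm BV}_{\rm H}(\X)$; and the inclusions ${\rm BV}_{\rm H}(\X)\subseteq{\rm BV}_{\rm W}(\X;\LIP_b(\X))\subseteq{\rm BV}_{\rm W}(\X;\mathscr A)\subseteq{\rm BV}_{\rm H}(\X;\mathscr A)={\rm BV}_{\rm H}(\X)$ show that all four spaces coincide. For the identity of measures, I would combine the pointwise comparisons $|{\bf D}f|_{\mathscr A}\le|{\bf D}f|_{\LIP_b(\X)}\le|{\bf D}f|_*$ with the equality of total masses obtained above: all three are finite non-negative Borel measures (finiteness of the first two comes from Proposition~\ref{prop:equiv_|Df|}), and a non-negative measure dominated by another one of the same total mass must coincide with it; hence $|{\bf D}f|_*=|{\bf D}f|_{\mathscr A}=|{\bf D}f|_{\LIP_b(\X)}$.

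I do not expect any real obstacle at the level of this theorem itself --- it is essentially a bookkeeping argument --- since the hard work sits in the earlier statements: Proposition~\ref{prop:approx_with_A} (propagating the good-algebra approximation of truncated distances to all of $\LIP_b(\X)$, which in turn feeds Theorem~\ref{thm:indep_der}) and Proposition~\ref{prop:approx_with_glob_Lip} (the completeness-dependent density of bounded globally Lipschitz functions in ${\rm BV}_{\rm H}(\X)$). The only delicate points are verifying that $\LIP_b(\X)$ is an admissible algebra (via finiteness of $\mm$) and that the total-variation measures involved are finite, so that the ``domination plus equal mass forces equality'' step is legitimate.
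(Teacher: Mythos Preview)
Your proposal is correct and follows essentially the same route as the paper: the chain ${\rm BV}_{\rm W}(\X;\mathscr A)\subseteq{\rm BV}_{\rm H}(\X;\mathscr A)\subseteq{\rm BV}_{\rm H}(\X)\subseteq{\rm BV}_{\rm W}(\X;\LIP_b(\X))$ with the corresponding inequalities (via Theorem~\ref{thm:BV_W_in_BV_H}, \eqref{eq:incl_BV_H}, Remark~\ref{rmk:BV_H_loc_in_BV_W_bis}), closed by Corollary~\ref{cor:BV_W_indep_A}, is exactly the paper's argument. Your explicit invocation of Proposition~\ref{prop:approx_with_glob_Lip} and the ``equal mass plus domination'' step for the measure identity are slightly more detailed than the paper's write-up, but the logic is identical.
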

\begin{proof}
First of all, observe that for any unital separating subalgebra \(\mathscr A\subseteq\LIP_b(\X)\) it holds that
\begin{equation}\label{eq:equivalence_aux}\begin{split}
&{\rm BV}_{\rm W}(\X;\mathscr A)\subseteq{\rm BV}_{\rm H}(\X;\mathscr A)\subseteq{\rm BV}_{\rm H}(\X)\subseteq{\rm BV}_{\rm W}(\X;\LIP_b(\X)),\\
&|{\bf D}f|_{\LIP_b(\X)}(\X)\leq\|{\bf D}f\|_*\leq\|{\bf D}f\|_{*,\mathscr A}\leq|{\bf D}f|_\mathscr A(\X)\quad\text{ for every }f\in{\rm BV}_{\rm W}(\X;\mathscr A).
\end{split}\end{equation}
The above inclusions (from left to right) and the above inequalities (from right to left) follow from Theorem \ref{thm:BV_W_in_BV_H}, \eqref{eq:incl_BV_H}
and Remark \ref{rmk:BV_H_loc_in_BV_W_bis}. If \(\mathscr A\) is a good subalgebra, we know from Corollary \ref{cor:BV_W_indep_A} that
\({\rm BV}_{\rm W}(\X;\LIP_b(\X)) \subseteq {\rm BV}_{\rm W}(\X;\mathscr A)\) and \(|{\bf D}f|_{\mathscr A}\leq |{\bf D}f|_{\LIP_b(\X)}\)
for every \(f\in{\rm BV}_{\rm W}(\X;\LIP_b(\X))\). Therefore, the above inclusions and inequalities are all equalities. The statement follows.
\end{proof}

We conclude by formulating a partial converse of Theorem \ref{thm:equivalence_A}:
\begin{proposition}\label{prop:weakly_good_subalgebra}
Let \((\X,\sfd,\mm)\) be a metric measure space. Let \(\mathscr A\) be a unital separating subalgebra of \(\LIP_\star(\X)\). If
\({\rm BV}_{\rm H}(\X;\mathscr A)={\rm BV}_{\rm H}(\X)\) and \(\|{\bf D}f\|_{*,\mathscr A}=\|{\bf D}f\|_*\) for every \(f\in{\rm BV}_{\rm H}(\X;\mathscr A)\),
then $\mathscr A$ is a \textbf{weakly good subalgebra}: for every $x \in \X$
and for every $r>0$, there exist \((f_n)_n\subseteq\mathscr A\) and \(\rho\in L^\infty(\mm)\) with \(0\leq\rho\leq 1\) such that
\(\|f_n-\sfd_x\wedge r\|_{L^1(\mm)}\to 0\) and \(\lip_a(f_n)\mm \rightharpoonup\rho\mm\) weakly.
\end{proposition}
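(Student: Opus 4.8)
The plan is to apply the assumed identification to the single test function $u\coloneqq\sfd_x\wedge r$, and to observe that for a truncated distance the total variation measure $|{\bf D}u|_*$ is automatically absolutely continuous with respect to $\mm$, with density bounded by $1$; the energy densities of the optimal approximating sequence provided by the hypothesis will then converge weakly to a measure of the required form.

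First I would note that $u=\sfd_x\wedge r$ is bounded and globally $1$-Lipschitz, hence $u\in\LIP_b(\X)\subseteq\LIP_\star(\X)$, so in particular $u\in{\rm BV}_{\rm H}(\X)$ (test the definition with the constant sequence $f_n\equiv u$). Next I would show that $|{\bf D}u|_*\leq\mm$. Fix an open set $U\subseteq\X$: the restriction $u|_U$ belongs to $\LIP_\star(U)$ and satisfies $\lip_a(u|_U)\leq 1$ on $U$, since $\Lip(u|_U;B_s(y)\cap U)\leq\Lip(u;B_s(y))\leq 1$ for all $y\in U$ and $s>0$. Plugging the constant sequence $(u|_U)_n$ into the definition of $|{\bf D}u|_*(U)$ therefore yields $|{\bf D}u|_*(U)\leq\int_U\lip_a(u|_U)\,\d\mm\leq\mm(U)$. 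By the characterisation of the Borel measure $|{\bf D}u|_*$ recalled after Definition~\ref{def:BV_H_meas} and the outer regularity of the finite Borel measure $\mm$, we conclude that $|{\bf D}u|_*(E)\leq\mm(E)$ for every Borel set $E\subseteq\X$. Hence $|{\bf D}u|_*\ll\mm$, and setting $\rho\coloneqq\frac{\d|{\bf D}u|_*}{\d\mm}$ we have $\rho\in L^\infty(\mm)$ with $0\leq\rho\leq 1$ and $|{\bf D}u|_*=\rho\mm$.

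Finally, I would invoke the hypothesis: since $u\in{\rm BV}_{\rm H}(\X)={\rm BV}_{\rm H}(\X;\mathscr A)$ and $\|{\bf D}u\|_{*,\mathscr A}=\|{\bf D}u\|_*$, there is a sequence $(f_n)_n\subseteq\mathscr A$ with $f_n\to u$ in $L^1(\mm)$ and $\int\lip_a(f_n)\,\d\mm\to\|{\bf D}u\|_{*,\mathscr A}=\|{\bf D}u\|_*$. As $(f_n)_n\subseteq\mathscr A\subseteq\LIP_\star(\X)$, Remark~\ref{rmk:Portmanteau} applies verbatim and gives $\lip_a(f_n)\mm\rightharpoonup|{\bf D}u|_*=\rho\mm$ weakly; combined with $f_n\to\sfd_x\wedge r$ in $L^1(\mm)$, this is exactly the defining property of a weakly good subalgebra. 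I do not expect a real obstacle here: the only step needing some care is the bound $|{\bf D}u|_*\leq\mm$, which is where the $1$-Lipschitz nature of the truncated distance is used and which forces the weak limit of $(\lip_a(f_n)\mm)_n$ to be of the form $\rho\mm$ with $0\leq\rho\leq 1$.
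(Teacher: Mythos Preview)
Your proposal is correct and follows essentially the same route as the paper's proof: set \(u=\sfd_x\wedge r\in\LIP_\star(\X)\subseteq{\rm BV}_{\rm H}(\X)={\rm BV}_{\rm H}(\X;\mathscr A)\), pick an optimal approximating sequence in \(\mathscr A\), apply Remark~\ref{rmk:Portmanteau}, and use \(|{\bf D}u|_*\leq\mm\) to write the weak limit as \(\rho\mm\) with \(0\leq\rho\leq 1\). The only difference is that the paper dismisses the bound \(|{\bf D}(\sfd_x\wedge r)|_*\leq\mm\) in one line as ``a direct consequence of the definition'', whereas you spell out the argument via the constant sequence on each open set and outer regularity.
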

\begin{proof}
Fix any \(x\in\X\) and \(r>0\). Given that \(\sfd_x\wedge r\in\LIP_\star(\X) \subseteq{ \rm BV}_{\rm H}(\X)={\rm BV}_{\rm H}(\X;\mathscr A)\)
and \(\|{\bf D}(\sfd_x\wedge r)\|_{*,\mathscr A}=|{\bf D}(\sfd_x\wedge r)|_*(\X)\), we can find \((f_n)_n\subseteq\mathscr A\) such that
\(f_n\to\sfd_x\wedge r\) in \(L^1(\mm)\) and \(\int\lip_a(f_n)\,\d\mm\to|{\bf D}(\sfd_x\wedge r)|_*(\X)\). It follows from Remark \ref{rmk:Portmanteau}
that \(\lip_a(f_n)\mm\rightharpoonup|{\bf D}(\sfd_x\wedge r)|_*\) weakly. Since \(|{\bf D}(\sfd_x\wedge r)|_*\leq\mm\) as a direct consequence of
the definition, we proved the statement. 
\end{proof}
\appendix
\section{The total variation measure}\label{app:measure}
We report here the results in \cite[Lemmata 5.2 and 5.4]{Amb:DiMa:14}
briefly explaining why they do not make use of the completeness nor separability of the ambient metric space, and also
that they still work with our notion of asymptotic slope and for the approximating class being
$\LIP_\star(\X)$ and not $\LIP_{loc}(\X)$.
\begin{lemma}\label{lem:joint}
Let $(\X, \sfd, \mm)$ be a metric measure space and let $M,N \subseteq \X$ be open sets such that
$\eta\coloneqq\sfd(M\setminus N, N \setminus M)>0$. Set
\[ 
\phi(x):= \frac{3}{\eta} \min \left \{\big(\sfd(x, N \setminus M)- \eta/3\big)^+, \frac{\eta}{3} \right \}
\quad \text{ for every }x \in M \cup N,
\]
and $H\coloneqq \phi^{-1}((0,1))$. Then $H$ is an open subset of $ M \cap N$ with
$\sfd(H, (M \setminus N)\cup(N \setminus M))>0$. Whenever $u \in \LIP_\star(M)$ and $v \in \LIP_\star(N)$, setting
\[
w(x)\coloneqq(1-\phi(x))u(x) + \phi(x) v(x)\quad\text{ for every } x \in M \cup N,
\]
(where we extended trivially $u$ and $v$ to the whole set $M \cup N$), we have that $w \in \LIP_\star(M \cup N)$ and moreover it holds:
\begin{align*}
    \int_{M \cup N} \lip_a (w) \, \d \mm \le \int_M \lip_a (u) \, \d \mm + \int_N \lip_a (v) \, \d \mm + \frac{3}{\eta} \int_{\overline{H}}|u-v| \, \d \mm. \\
    w \equiv u \text{ on a neighbourhood of $M \setminus N$}, \quad w \equiv v \text{ on a neighbourhood of $N \setminus M$}.\\
    \int_{M \cup N} |w-\sigma| \, \d \mm \le \int_M | u-\sigma| \, \d \mm + \int_N |v-\sigma | \, \d \mm \quad \text{ for every } \sigma \in L^1(M \cup N).
\end{align*}
\end{lemma}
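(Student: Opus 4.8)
The plan is to settle the four assertions in order; the only one requiring real work is the bound on $\int_{M\cup N}\lip_a(w)\,\d\mm$.

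First I would record the elementary properties of $\phi$ and $H$. The map $\phi$ is $\tfrac3\eta$-Lipschitz on $M\cup N$ and takes values in $[0,1]$, being $\tfrac3\eta$ times a composition of $1$-Lipschitz maps, so $\lip_a(\phi)\le\tfrac3\eta$ everywhere. The definition of $\phi$ shows that $\phi$ vanishes on an open neighbourhood $U_u$ of $M\setminus N$ and equals $1$ on an open neighbourhood $U_v$ of $N\setminus M$; since $\sfd(M\setminus N,N\setminus M)=\eta$, any point lying within distance $<\eta$ of $M\setminus N$ cannot belong to $N\setminus M$ and conversely, so in fact $U_u\subseteq M$ and $U_v\subseteq N$. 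Consequently $H=\phi^{-1}((0,1))$ is open, is disjoint from $U_u\cup U_v$, and hence $\overline H\subseteq(M\cup N)\setminus(U_u\cup U_v)\subseteq M\cap N$; in particular $u$ and $v$ are genuinely defined, bounded and locally Lipschitz on a neighbourhood of $\overline H$. Finally, a triangle-inequality estimate exploiting that the cut-offs defining $\phi$ occur at scale $\eta/3$ gives $\sfd(H,(M\setminus N)\cup(N\setminus M))\ge\eta/3>0$.

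Next I would prove that $w\in\LIP_\star(M\cup N)$ together with the total-variation estimate. Since $\phi\equiv0$ on $U_u$ and $\phi\equiv1$ on $U_v$, we have $w=u$ on $U_u$ and $w=v$ on $U_v$, while $w=(1-\phi)u+\phi v$ on $M\cap N$; as $\{U_u,M\cap N,U_v\}$ is an open cover of $M\cup N$, the function $w$ is continuous, bounded and locally Lipschitz, and $\lip_a(w)$ (computed in $M\cup N$) equals $\lip_a(u)$ on $U_u$ and $\lip_a(v)$ on $U_v$, by the locality of $f\mapsto\inf_{r>0}\Lip(f;B_r(\cdot))$. The remaining content is the pointwise inequality
\[
\lip_a(w)\ \le\ \1_M\,\lip_a(u)+\1_N\,\lip_a(v)+\1_{\overline H}\,\tfrac3\eta\,|u-v|\qquad\text{on }M\cup N,
\]
which I would then integrate over $M\cup N$: the right-hand side is integrable (because $\lip_a(u)$ and $\lip_a(v)$ are, $u,v$ are bounded on $\overline H$, and $\mm$ is finite), so this yields simultaneously $\lip_a(w)\in L^1(\mm)$ and the asserted estimate. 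On $U_u$ the inequality is trivial, since there $\lip_a(w)=\lip_a(u)$, $\1_M=1$ and $\1_{\overline H}=0$; likewise on $U_v$. On $M\cup N\setminus(U_u\cup U_v\cup\overline H)$ one checks that $\phi$ is locally constant — two points at which $\phi$ takes the two distinct values $0,1$ lie at distance $\ge\eta/3$, so a ball of radius $<\eta/6$ meets at most one of the levels — whence $w$ locally coincides with $u$ or with $v$ and the bound holds. Finally, on $\overline H\subseteq M\cap N$ I would apply the sum and product rules for $\lip_a$ to the two decompositions $w=u+\phi(v-u)$ and $w=v+(1-\phi)(u-v)$, obtaining
\[
\lip_a(w)\ \le\ \lip_a(u)+\phi\bigl(\lip_a(u)+\lip_a(v)\bigr)+\tfrac3\eta|u-v|
\]
and the symmetric bound with $u\leftrightarrow v$ and $\phi\leftrightarrow1-\phi$; at each point one of $\phi\,\lip_a(u)\le(1-\phi)\lip_a(v)$ and its converse holds, and the corresponding bound is then $\le\lip_a(u)+\lip_a(v)+\tfrac3\eta|u-v|$.

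It remains to observe the two easy assertions. That $w\equiv u$ on a neighbourhood of $M\setminus N$ and $w\equiv v$ on a neighbourhood of $N\setminus M$ is precisely $w=u$ on $U_u$ and $w=v$ on $U_v$. For the last inequality I would split $M\cup N=(M\setminus N)\sqcup(M\cap N)\sqcup(N\setminus M)$: on $M\setminus N$ one has $|w-\sigma|=|u-\sigma|$, on $N\setminus M$ one has $|w-\sigma|=|v-\sigma|$, and on $M\cap N$ the convex-combination structure gives $|w-\sigma|\le(1-\phi)|u-\sigma|+\phi|v-\sigma|\le|u-\sigma|+|v-\sigma|$; integrating and regrouping yields the claim. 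The genuinely delicate point of the whole argument is the total-variation estimate on $\overline H$: the naive product rule there leaves a spurious factor $(1+\phi)$ in front of $\lip_a(u)$ (or of $\lip_a(v)$), and the remedy is exactly the pointwise selection between the two decompositions above according to the sign of $\phi\,\lip_a(u)-(1-\phi)\lip_a(v)$; everything else amounts to bookkeeping with the open cover $\{U_u,M\cap N,U_v\}$ and the locality of the asymptotic slope.
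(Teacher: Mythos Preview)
Your argument is correct, but you take a detour on the key estimate that the paper avoids. A minor slip first: from the formula $\phi(x)=\tfrac3\eta\min\{(\sfd(x,N\setminus M)-\eta/3)^+,\eta/3\}$ one has $\phi\equiv 1$ (not $0$) on the neighbourhood $\{\sfd(\cdot,N\setminus M)>2\eta/3\}\supseteq M\setminus N$ and $\phi\equiv 0$ on $\{\sfd(\cdot,N\setminus M)<\eta/3\}\supseteq N\setminus M$. Your swap happens to cancel against a typo in the displayed definition of $w$ in the statement; the paper's own proof actually works with $w=\phi u+(1-\phi)v$, for which everything is consistent. This is harmless for the logic, since the whole picture is symmetric under $(u,M,\phi)\leftrightarrow(v,N,1-\phi)$, but you should correct the identification.

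The substantive difference is in how you bound $\lip_a(w)$. You write $w$ in two ways, apply the sum/product rules to each, and then perform a pointwise case split according to the sign of $\phi\,\lip_a(u)-(1-\phi)\lip_a(v)$ to kill the ``spurious'' factor. The paper instead uses in one stroke the convexity-type inequality for the asymptotic slope,
\[
\lip_a\big(\phi u+(1-\phi)v\big)\ \le\ \phi\,\lip_a(u)+(1-\phi)\,\lip_a(v)+\lip_a(\phi)\,|u-v|,
\]
valid on $M\cap N$; combined with $\phi\le\1_M$, $1-\phi\le\1_N$ and $\lip_a(\phi)\le\tfrac3\eta\,\1_{\overline H}$ (since $\lip_a(\phi)=0$ on $\operatorname{int}\{\phi=0\}\cup\operatorname{int}\{\phi=1\}$), this integrates immediately to the stated bound. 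Your route reaches the same endpoint via the weaker pointwise inequality $\lip_a(w)\le\1_M\lip_a(u)+\1_N\lip_a(v)+\1_{\overline H}\tfrac3\eta|u-v|$, which still integrates to what is needed; so nothing is lost, but the case analysis is unnecessary once you know the convexity inequality. The remaining parts of your proof---the open cover $\{\operatorname{int}\{\phi=0\},\operatorname{int}\{\phi=1\},\overline H\}$, the locality of $\lip_a$, and the $L^1$ estimate via $|w-\sigma|\le(1-\phi)|u-\sigma|+\phi|v-\sigma|$---match the paper's argument.
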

\begin{proof}[Comment to the proof] The proof is verbatim the one of \cite[Lemma 5.4]{Amb:DiMa:14}, which relies only on the convexity inequality and the locality of the slope (which work also for $\lip_a$), and on the fact that the distance function is $1$-Lipschitz.
\end{proof}
\begin{proposition} Let $(\X, \sfd, \mm)$ be a metric measure space. Let $\tau_\sfd$ be the topology induced by $\sfd$. For $f \in L^1(\mm)$ we define the function $|{\bf D}f|_*: \tau_\sfd \to [0,+\infty]$ as 
\[
|{\bf D}f|_*(U)\coloneqq\inf\bigg\{\liminf_{n\to\infty}\int_U\lip_a(f_n)\,\d\mm\;\bigg|\;\LIP_\star(U)\ni f_n\to f|_U\text{ in }L^1(\mm|_U)\bigg\} \quad\text{ for every } U \in \tau_\sfd,
\]
with the convention that $|{\bf D}f|_*(\varnothing)\coloneqq 0$. Then $|{\bf D}f|_*$ satisfies the following properties:
\begin{enumerate}
    \item If $A_1, A_2 \in \tau_\sfd$ and $A_1 \subseteq A_2$, then $|{\bf D}f|_*(A_1) \le |{\bf D}f|_*(A_2)$.
    \item If $A_1, A_2 \in \tau_\sfd$, then $|{\bf D}f|_*(A_1 \cup A_2) \le |{\bf D}f|_*(A_1) +|{\bf D}f|_*(A_2)$, with equality if $A_1$ and $A_2$ are disjoint.
    \item If $(A_n)_n \subseteq \tau_\sfd$ and $A_n \subseteq A_{n+1}$ for every $n \in \N$, then
    \begin{equation}
        \lim_{n \to \infty} |{\bf D}f|_*(A_n) = |{\bf D}f|_* \left ( \bigcup_{n=1}^{\infty} A_n \right ).
    \end{equation}
\end{enumerate}
In particular the formula 
\begin{equation}\label{eq:exte}
|{\bf D}f|_*(E)\coloneqq\inf\big\{|{\bf D}f|_*(U)\;\big|\;U\subseteq\X\text{ open, }E\subseteq U\big\}\quad\text{ for every } E\subseteq\X,
\end{equation}
provides a $\sigma$-subadditive extension of $|{\bf D}f|_*$ whose collection of additive sets, in the sense of Carath\'{e}odory, contains $\mathcal{B}(X)$. Therefore $|{\bf D}f|_*\colon \mathcal{B}(X) \to [0,+\infty]$ is a Borel measure.
\end{proposition}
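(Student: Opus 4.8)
The plan is to first establish the three properties $(1)$, $(2)$, $(3)$ of the set function $|{\bf D}f|_*$ on $\tau_\sfd$ --- reproducing, and carefully verifying, the arguments in the proofs of \cite[Lemmata 5.2 and 5.4]{Amb:DiMa:14} --- and then to run the standard Carath\'{e}odory construction on it. Property $(1)$ is immediate by restricting competitors: if $A_1\subseteq A_2$ are open and $\LIP_\star(A_2)\ni f_n\to f|_{A_2}$ in $L^1(\mm|_{A_2})$, then $f_n|_{A_1}\in\LIP_\star(A_1)$ converges to $f|_{A_1}$ in $L^1(\mm|_{A_1})$ and $\int_{A_1}\lip_a(f_n|_{A_1})\,\d\mm\le\int_{A_1}\lip_a(f_n)\,\d\mm\le\int_{A_2}\lip_a(f_n)\,\d\mm$; taking the $\liminf$ and then the infimum over competitors for $A_2$ yields $(1)$. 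The superadditivity half of $(2)$, i.e.\ $|{\bf D}f|_*(A_1)+|{\bf D}f|_*(A_2)\le|{\bf D}f|_*(A_1\cup A_2)$ when $A_1\cap A_2=\varnothing$, also follows by restriction: since $A_1$ and $A_2$ are open and disjoint, for $x\in A_i$ and $r$ small the ball of radius $r$ about $x$ computed in $A_1\cup A_2$ lies in $A_i$ and coincides with the one computed in $A_i$, so $\lip_a$ computed in $A_1\cup A_2$ agrees on $A_i$ with $\lip_a$ computed in $A_i$; hence any competitor $(f_n)_n$ for $A_1\cup A_2$ restricts to competitors for $A_1$ and for $A_2$ with $\int_{A_1\cup A_2}\lip_a(f_n)\,\d\mm=\int_{A_1}\lip_a(f_n|_{A_1})\,\d\mm+\int_{A_2}\lip_a(f_n|_{A_2})\,\d\mm$, and one invokes the superadditivity of $\liminf$. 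Combined with the subadditivity half this gives equality for disjoint open sets.

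The substantive part of $(2)$ and $(3)$ is the gluing provided by Lemma \ref{lem:joint}. For the subadditivity inequality $|{\bf D}f|_*(A_1\cup A_2)\le|{\bf D}f|_*(A_1)+|{\bf D}f|_*(A_2)$ one takes near-optimal competitors $u$ for $A_1$ and $v$ for $A_2$ and combines them through the interpolation $w=(1-\phi)u+\phi v$ of Lemma \ref{lem:joint}, applied to open sets $M\subseteq A_1$, $N\subseteq A_2$ with $\sfd(M\setminus N,N\setminus M)>0$ and with $M\cup N$ ranging over an exhaustion of $A_1\cup A_2$; for fixed $\eta$ the extra term $\tfrac{3}{\eta}\int_{\overline H}|u-v|\,\d\mm$ is controlled by the $L^1$-closeness of $u$ and $v$ to $f$, and $(3)$ lets one pass to the limit along the exhaustion (the delicate case, in which $A_1\setminus A_2$ and $A_2\setminus A_1$ are not at positive distance, being absorbed here). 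For the nontrivial inequality in $(3)$, namely $|{\bf D}f|_*\big(\bigcup_nA_n\big)\le\lim_n|{\bf D}f|_*(A_n)$ (the reverse being $(1)$), one exhausts $A\coloneqq\bigcup_nA_n$ by an increasing sequence of open sets $U_k$ each at positive distance from $\X\setminus A_{n_k}$ for a suitable $n_k$ --- e.g.\ $U_k\coloneqq\{x\in A_k:\sfd(x,\X\setminus A_k)>1/k\}$ works, using $A_n\uparrow A$ --- glues near-optimal competitors across the successive shells by iterating Lemma \ref{lem:joint}, and chooses the approximations so that the gluing errors are summable in $k$. The delicate bookkeeping here --- keeping each separation $\eta$ strictly positive while making the $L^1$-errors small enough to beat the corresponding factor $\tfrac{3}{\eta}$, and carrying this out without ever using completeness or separability of $(\X,\sfd)$ and allowing $\LIP_\star$ (not merely $\LIP_{loc}$) competitors --- is exactly the content of \cite[Lemmata 5.2 and 5.4]{Amb:DiMa:14} that the present appendix makes explicit, and it is the main obstacle.

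Granting $(1)$--$(3)$, the final assertion is routine. From $(2)$ and $(3)$ one gets $\sigma$-subadditivity on open sets: for $(U_n)_n\subseteq\tau_\sfd$, $|{\bf D}f|_*\big(\bigcup_nU_n\big)=\lim_N|{\bf D}f|_*\big(\bigcup_{n\le N}U_n\big)\le\lim_N\sum_{n\le N}|{\bf D}f|_*(U_n)=\sum_n|{\bf D}f|_*(U_n)$. Consequently the set function defined by \eqref{eq:exte} (which agrees with the original one on open sets by $(1)$) is monotone, vanishes on $\varnothing$, and is $\sigma$-subadditive on all subsets of $\X$, i.e.\ it is an outer measure. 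It is a \emph{metric} outer measure: if $E,F\subseteq\X$ with $2\eta\coloneqq\sfd(E,F)>0$ and $U\supseteq E\cup F$ is open, the sets $U_E\coloneqq U\cap\{\sfd(\cdot,E)<\eta\}$ and $U_F\coloneqq U\cap\{\sfd(\cdot,F)<\eta\}$ are disjoint open subsets of $U$ containing $E$ and $F$ respectively, whence, by the disjoint-additivity in $(2)$ and by monotonicity, $|{\bf D}f|_*(E)+|{\bf D}f|_*(F)\le|{\bf D}f|_*(U_E)+|{\bf D}f|_*(U_F)=|{\bf D}f|_*(U_E\cup U_F)\le|{\bf D}f|_*(U)$; letting $U$ decrease to $E\cup F$ and combining with subadditivity yields $|{\bf D}f|_*(E\cup F)=|{\bf D}f|_*(E)+|{\bf D}f|_*(F)$. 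By Carath\'{e}odory's criterion for metric outer measures, every Borel set is $|{\bf D}f|_*$-measurable, so the $\sigma$-algebra of additive sets (in the sense of Carath\'{e}odory) contains $\mathcal B(\X)$; the restriction of $|{\bf D}f|_*$ to $\mathcal B(\X)$ is therefore a (possibly infinite) Borel measure.
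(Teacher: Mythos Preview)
Your proposal is correct and follows essentially the same approach as the paper: monotonicity and disjoint superadditivity by restriction, the subadditivity in $(2)$ and the inner-continuity $(3)$ via iterated applications of the gluing Lemma~\ref{lem:joint} along an exhaustion by open sets at positive distance from the complement (the paper, like you, proves $(3)$ first and then deduces $(2)$ from it), followed by the Carath\'{e}odory construction. The only cosmetic difference is in the last step: you verify directly that the outer measure defined by \eqref{eq:exte} is a \emph{metric} outer measure and invoke Carath\'{e}odory's criterion, whereas the paper cites \cite[Theorems 1.11.10 and 7.14.29]{Bog:07} for the same conclusion---these are equivalent routes.
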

\begin{proof}[Comment to the proof] The proof is almost verbatim the one of \cite[Lemma 5.2]{Amb:DiMa:14}; given Lemma \ref{lem:joint}, it relies only on the definition of $|{\bf D}f|_*$ via relaxation, the monotone convergence theorem, integral estimates, and the Carath\'{e}odory criterion which works for metric spaces. The only difference is that, in the notation of the proof of \cite[Lemma 5.2]{Amb:DiMa:14}, one has to replace the functions $u_m$ therein with $\tilde{u}_m:=u_m \wedge m$, and it is easy to check that $(\tilde{u}_m)_m \subset \LIP_\star(A)$.
\end{proof}
\medskip
\paragraph{\em\bfseries Data availability statement}
Data sharing not applicable to this article as no datasets were generated or analysed during the current study.
\medskip
\paragraph{\em\bfseries Conflict-of-interests statement}
The authors have no competing interests to declare that are relevant to the content of this article.
\end{document}